\documentclass[pdflatex,sn-vancouver-num]{sn-jnl}

\usepackage{graphicx}%
\usepackage{multirow}%
\usepackage{amsmath,amssymb,amsfonts}%
\usepackage{amsthm}%
\usepackage{mathrsfs}%
\usepackage[title]{appendix}%
\usepackage{xcolor}%
\usepackage{textcomp}%
\usepackage{manyfoot}%
\usepackage{booktabs}%
\usepackage{algorithm}%
\usepackage{algorithmicx}%
\usepackage{algpseudocode}%
\usepackage{listings}%
\usepackage{xypic}

\usepackage{soul} 

\theoremstyle{thmstyleone}%
\newtheorem{theorem}{Theorem}
\newtheorem{proposition}[theorem]{Proposition}%
\newtheorem{corollary}[theorem]{Corollary}%
\newtheorem{lemma}[theorem]{Lemma}%

\theoremstyle{thmstyletwo}%
\newtheorem{example}{Example}%
\newtheorem{remark}{Remark}%

\theoremstyle{thmstylethree}%
\newtheorem{definition}{Definition}%

\begin{document}

\title[Algebraic integrability and minimality of Lie equations]{
Algebraic integrability and minimality of 
Lie equations for transitive, finite dimensional, non-commutative pseudogroups 
}


\author[1,3,4]{\fnm{Alejandro} \sur{Arenas Tirado}}\email{alej.arenast@gmail.com}

\author[1]{\fnm{David} \sur{Bl\'azquez-Sanz}}\email{dblazquezsa@unal.edu.co}

\author[2]{\fnm{Guy} \sur{Casale}}\email{guy.casale@univ-rennes1.fr}

\affil*[1]{\orgdiv{Universidad Nacional de Colombia Sede Medell\'in - Facultad de Ciencias}, \orgname{Departamento de Matem\'aticas}, \orgaddress{\street{Carrera 65 \# 59A - 110}, \city{Medell\'in}, \postcode{050034}, \state{Antioquia}, \country{Colombia}}}

\affil[2]{\orgdiv{IRMAR}, \orgname{Universit\'e Rennes}, \orgaddress{\street{CNRS IRMAR-UMR}, \city{6625}, \postcode{F-35000}, \state{Rennes}, \country{France}}}

\affil[3]{\orgdiv{Grupo I$^2$ Innovaci\'on e Ingenier\'ia}, \orgname{Corporaci\'on Universitaria Minuto de Dios- UNIMINUTO}, \orgaddress{\street{Carrera 45 \# 22D - 25}, \city{Bello}, \postcode{051050}, \state{Antioquia}, \country{Colombia}}}

\affil[4]{\orgdiv{Instituto de Matem\'aticas}, \orgname{Universidad de Antioquia}, \orgaddress{\street{Calle 67 \# 53-108}, \city{Medell\'in}, \postcode{050010}, \state{Antioquia},\country{Colombia}}}


\abstract{
We provide an algebraic characterization of transitive, finite-dimensional algebraic Lie pseudogroups (or $\mathcal{D}$-groupoids) that are algebraic integrable, that is, isogenous to the action groupoid of an algebraic group action. Our approach is based on the differential Galois theory of rational connections. Under suitable hypotheses on the Lie algebra of the $\mathcal D$-groupoid, its algebraic integrability is equivalent to the triviality of the differential Galois group of its $\mathcal D$-Lie algebra. Furthermore, we investigate the structure of highly non-integrable $\mathcal{D}$-groupoids, demonstrating that if the differential Galois group of the linear differential equation of their $\mathcal D$-Lie algebra is large enough, then they are minimal in the sense that they admit no non-trivial sub-$\mathcal{D}$-groupoids of positive dimension.
}

\keywords{Lie pseudogroup, $\mathcal{D}$-Groupoid, differential invariants, Lie connection, Picard-Vessiot group, minimality.}

\pacs[MSC Classification]{primary: 12H05, 58H05, secondary: 14L10, 34M15}

\maketitle

\section{Introduction}\label{sec1}

There is a longstanding family of problems concerning the integration of linear infinitesimal structures into non-linear ones that dates back to the classical fundamental theorems of Lie, which established a correspondence between Lie algebras and Lie groups \cite{Lie1893}, the problem of the integrability of non-linear Lie equations \cite{rodrigues1962first, kumpera1972lie, munoz2000integrability}, and the integration of Lie algebroids into Lie groupoids \cite{crainic2003integrability}. In this work, we address the problem of the algebraic integrability of the system of PDEs defining an algebraic Lie pseudogroup (or $\mathcal{D}$-groupoid), in relation to the algebraic integrability of its linearized equations (or $\mathcal{D}$-Lie algebra). The question of algebraic integrability reduces to determining when a $\mathcal{D}$-groupoid reduces to a classical algebraic group action. We say that such a $\mathcal{D}$-groupoid is algebraically integrable if it is isogenous to an algebraic group action (Definition \ref{def_isogeny}). Previous works, such as \cite{bc2017}, established criteria for these kinds of reductions in specific contexts (parallelisms) and under additional hypotheses (normality of the Lie algebra). In contrast to algebraically integrable $\mathcal{D}$-groupoids, we find $\mathcal{D}$-groupoids that do not admit non-trivial sub-$\mathcal{D}$-groupoids; we call such $\mathcal{D}$-groupoids \textit{minimal} (Definition \ref{def_minimal}). The first proof of the minimality of a $\mathcal{D}$-groupoid was given in \cite{Casale_BlazquezSanz_ArenasTirado_2025}. The goal of this paper is to formalize and exploit the following principle:
\\

\textit{Under suitable hypotheses on its infinitesimal structure, a transitive finite-dimensional $\mathcal{D}$-groupoid behaves like an algebraic group action exactly when its associated $\mathcal D$-Lie algebra has a trivial Galois Lie algebra; in the opposite direction, a nontrivial Galois group forces minimality properties and rigidity of product actions.} \\

This principle takes a concrete form in the results of Theorems \ref{thm:1}, \ref{thm:2}, \ref{thm:6} ,\ref{thm:7} and Corollary \ref{corollary16}. 
Theorems \ref{thm:1} and \ref{thm:2} provide an algebraic, transitive, and finite-dimensional analogue to the integrability theorems for Lie pseudogroups and Lie algebra equations (see \cite{rodrigues1962first, kumpera1972lie,
munoz2000integrability}). These theorems establish an equivalence between the complete integrability of the non-linear equations defining a pseudogroup and the corresponding linear equations of its Lie algebra. Furthermore, our findings align with the Morales-Ramis-Sim\'o theory (see \cite{morales2007integrability} and references within), which connects the integrability of Hamiltonian dynamical systems to the differential Galois integrability of their linearized equations. Our work fits into this framework by interpreting the algebraic integrability of non-linear Lie equations through the lens of their linearized counterparts and the finiteness of their differential Galois groups. \\

Theorems \ref{thm:6} and \ref{thm:7} explain the scarcity of sub-$\mathcal D$-groupoids in the cartesian product of non-integrable $\mathcal D$-groupoids. There is an interesting remark by Lie \cite[Chapter 25]{Lie1893} stating that the diagonal action on $\mathbb R^2$ of the pseudogroup of transformations in one variable cannot be expressed through differential equations; thus, the diagonal pseudogroup is not, in general, a Lie pseudogroup. An interesting related result is Corollary \ref{corollary16}, which states that for $\mathcal D$-groupoids with centerless Lie algebras, a necessary and sufficient condition for their algebraic integrability is the existence of the diagonal subpseudogroup in the Cartesian product action as a $\mathcal D$-groupoid. This is also related to a remark by Malgrange \cite{malgrange2012leon} characterizing strongly normal extensions by means of the algebraic integrability of a foliation in the Cartesian product.
\\

\section{Preliminaries}

We recall the necessary background on $\mathcal{D}$-groupoids, foliations, parallelisms, and Galois Theory of Fields with operators, and we refer the reader to Malgrange \cite{malgrange2012leon}, Casale \& Bl\'azquez-Sanz \cite{bc2017}, Casale \cite{Casale2004SurLG}, Bia\l{}ynicki-Birula \cite{bialynicki1962galois}, and \cite{Casale_BlazquezSanz_ArenasTirado_2025} for details. Along this section, let $M$ be a smooth irreducible algebraic complex variety of dimension $n$.

\subsection{$\mathcal{D}$-Groupoids or algebraic Lie Pseudogroups}

 A \textit{Lie pseudogroup} is a pseudogroup (a set of local diffeomorphisms closed under composition, inversion, restriction, and amalgamation) that is defined by a system of PDE's satisfying some regularity conditions. In the algebraic geometry framework, one can drop the regularity condition and the singular version of pseudogroups will be called \textit{$ \mathcal D$-groupoids}. By definition, a $\mathcal D$-groupoid in an algebraic variety $M$ is an algebraic Lie pseudogroup in some dense Zariski open subset of $M$. There are several equivalent formalizations of this idea; here we will follow the exposition in \cite[Section 2]{bcd2022malgrange}, which is more convenient for our applications as it makes systematic use of the principal structure of the frame bundles.

\subsubsection{$k$-frame bundles and $k$-jet groupoids}
Let $M$ be smooth complex and irreducible algebraic variety.
A $k$-frame $r_x$ in $M$ is a $k$-jet of a local biholomorphism $(\mathbb C^n,0)\to M$ defined around $0\in \mathbb C^n$, its base point $x\in M$ is the image of $0$. The space of $k$-frames forms a principal bundle $R_kM\to M$ modeled over the algebraic group $\Gamma_k$ of $k$-jets of local automorphisms of $(\mathbb C^n,0)$ that naturally acts on $R_kM$ by composition on the right side. Note that $R_kM$ is a smooth algebraic variety of complex dimension $n\binom{n+k}{n}$, and $\Gamma_k$ is an affine algebraic group of complex dimension $n\left(\binom{n+k}{k} -1 \right)$. The field \(\mathbb{C}(R_k M)\) is referred to as the field of rational differential functions of order $\leq k$. The term \emph{differential} here expresses that the functions in $R_kM$ may depend on the derivatives of functions in $M$ with respect to the Cartesian coordinates $\varepsilon_1,\ldots , \varepsilon_n$ in $(\mathbb C^n,0)$.

\begin{example}\label{ex_1}
Let $\mathbb C^n = M$ with coordinates $x_1,\ldots,x_n$. Then, a $k$-frame is the $k$-jet $j_0^kr$ of a local biholorphism $r\colon(\mathbb C^n,0)\to M$, $r(\varepsilon) = (X_1(\varepsilon), \ldots, X_n(\varepsilon))$, where the functions $X_j(\varepsilon)$ are assume to be, without loss of generality, polynomials of deegre $\leq k$:
$$X_j(\varepsilon) = \sum_{0\leq|\alpha|\leq k}\frac{1}{\alpha!}\frac{\partial^{|\alpha|}X_j}{(\partial \varepsilon)^\alpha}(0)\varepsilon^\alpha
$$
By defining $x_{j:\alpha}(r) = \frac{\partial^{|\alpha|}X_j}{(\partial \varepsilon)^\alpha}(0)$
we obtain a system of $n \binom{n+k}{k}$ coordinates $\{x_{j:\alpha}\}_{|\alpha|\leq k}$ on $R_k\mathbb C^n$; where $x_{j:\alpha}$ represents the $\alpha$-th derivative of $x_j$ with respect to 
$\varepsilon = (\varepsilon_1,\ldots,\varepsilon_n)$. 
By the inverse function theorem, $R_kM$ identifies with the complement in $\mathbb C^{n \binom{n+k}{k}}$ of the hypersurface defined by $\{\det\left( x_{j:\epsilon_j} \right)= 0\}$, that is, the vanishing of the Jacobian. Here $\epsilon_j$ represents the unit vector in $\mathbb N^n$ with $j$-th coordinate equal to $1$. The field of rational differential functions of order $\leq k$ is $\mathbb C(x_{j:\alpha}\colon |\alpha|\leq k)$.
\end{example}

Simultaneously, we may consider the groupoid ${\rm Aut}_k(M)$ of $k$-jets of local biholomorphisms defined around any base point in $M$. This set has a natural algebraic structure of smooth variety of dimension $n$ + $n\binom{n+k}{k}$. Each element $\sigma_{xy}\in {\rm Aut}_k(M)$ has a source $x\in M$ and a target $y\in M$, and two of them may be composed if their respective target and source points coincide, so that ${\rm Aut}_k(M)$ has a natural groupoid structure, moreover, Fa\`a di Bruno's formula\footnote{Fa\`a di Bruno formula gives explicit expressions for the higher order derivatives of the composition of functions. See, for instance \cite{weisstein2003faa}.} shows that ${\rm Aut}_k(M)$ is an algebraic groupoid (meaning in the category of schemes over $\mathbb C$).

\begin{example}
As in example \ref{ex_1}, let $\mathbb C^n = M$ with coordinates $x_1,\ldots,x_n$. An element of ${\rm Aut}(M)$ is $j^k_x\sigma$, the $k$-jet of a local biholomorphism $\sigma\colon (M,x)\to M$, $x+ \varepsilon \mapsto \sigma(x+ \varepsilon)$ is also given by $\sigma(x_1+ \varepsilon_1,\ldots,x_n+ \varepsilon_n) = (Y_1(\varepsilon),\ldots,Y_n(\varepsilon))$, where the $Y_j(\varepsilon)$ are polynomials of degree $\leq k$ of the coordinates:
$$ Y_j(\varepsilon) = \sum_{0\leq|\alpha|\leq k}\frac{1}{\alpha!}\frac{\partial^{|\alpha|}Y_j}{(\partial x)^\alpha}(0)\varepsilon^\alpha
$$
By defining $y_{j:\alpha}(\sigma) = \frac{\partial^{|\alpha|}Y_j}{(\partial x)^\alpha}(0)$
we obtain a system of $n + n\binom{n+k}{k}$ coordinates $\{x_j,y_{j:\alpha}\}_{|\alpha|\leq q}$ in ${\rm Aut}(\mathbb C^n$); where $y_{j:\alpha}$ represents the $\alpha$-th derivative of $y_j$ with respect to $x = (x_1,\ldots,x_n)$. The field of rational functions in ${\rm Aut}_k(\mathbb C^n)$ is $\mathbb C(x_j, y_{j:\alpha}\colon |\alpha|\leq k)$.
\end{example}

However, we can arrive to the algebraic structure of ${\rm Aut}_k(M)$ by an alternative path, taking into account that any pair of frames $j_0^kr_x$, $j_0^kr_y$ can be composed to produce a $k$-jet of biholomorphism $j_x^k\sigma = j_x^k(r_y \circ r_x^{-1})$. Any $k$-jet of biholomorphism can be recovered as such a composition in a unique way, up to an element of $\Gamma_k$, which allows us to reconstruct ${\rm Aut}_k(M)$ as the quotient

$${\rm Aut}_k(M) \simeq (R_kM \times R_kM )/\Gamma_k.$$

This is the groupoid of gauge isomorphisms of fibers of the principal bundle $R_kM\to M$. Each $k$-jet of biholomorphism $j_x^k\sigma$ can be interpreted as a $\Gamma_k$-equivariant map from $(R_kM)_x$ to $(R_kM)_{\sigma(x)}$, where the application of $j^k_x\sigma$ to a $k$-frame $r_x$ is just its composition $j_0^k(\sigma\circ r_x)$, which yiels a $k$-frame at $y$. \\

Now, we may consider Zariski closed subgroupoids of ${\rm Aut}_k(M)$, but that notion would not be enough to allow us to define $\mathcal D$-groupoids. Instead, we will consider Zariski closed subsets that are groupoids \emph{on the generic point}.

\begin{definition}
A rational subgroupoid of ${\rm Aut}_k(M)$ is a Zariski closed subset $\mathcal G \subset {\rm Aut}_k(M)$ such that:
\begin{enumerate}
    \item[(a)] There is an open subset $M'\subset M$ such that $\mathcal G\vert_{M'} = \mathcal G \cap {\rm Aut}_k(M')\mapsto M'\times M'$ is a smooth algebraic groupoid. 
    \item[(b)] For any open subset $U\subset M$ the intersection $\mathcal G\vert_U = \mathcal G \cap {\rm Aut}_k(U)$ is dense in $\mathcal G$. 
\end{enumerate}
\end{definition}

There is a fundamental relationship between rational groupoids and algebras of rational functions in $R_kM$. To each rational groupoid $\mathcal G\subset {\rm Aut}_k(M)$, we can assign a field of rational differential invariants of order $\leq k$:
$${\rm Inv}(\mathcal G) = \{f \in \mathbb C(R_kM): 
f(r) = f(\sigma\circ r) \mbox{ for all } \sigma\in \mathcal G, r \in (R_kM)_{s(\sigma)} \cap {\rm dom}(f)\}$$

Reciprocally, for any rational differential function $f\in \mathbb C(R_kM)$ we can define its groupoid of symmetries ${\rm Sym}(f)$ as the Zariski closure of the set:
$$\{\sigma\in {\rm Aut}_k(M) \colon f(r) = f(\sigma\circ r) \mbox{ for all } r\in (R_kM)_{s(\sigma)}\cap {\rm dom}(f)\}$$
and for any set of rational functions $\mathbb F\subset \mathbb C(R_kM)$,
$${\rm Sym}(\mathbb F) = \bigcap_{f\in \mathbb F} {\rm Sym}(f).$$

Then, as a particular case of \cite[Proposition 2.18]{bcd2019differential} we have:

\begin{proposition}
    The assignments $\mathcal G \leadsto {\rm Inv}(\mathcal G)$ and $\mathbb F\leadsto {\rm Sym}(\mathbb F)$ are inverse bijective correspondences between the sets of rational subgroupoids of ${\rm Aut}_k(M)$ and $\Gamma_k$ invariant subfields of $\mathbb C(R_kM)$ containing $\mathbb C$. 
\end{proposition}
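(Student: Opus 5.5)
The plan is to reduce the statement to a Galois correspondence for the principal bundle $R_kM\to M$, with two things to check: that both assignments land in the right sets, and that they are mutually inverse.

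\textbf{Well-definedness.} First, ${\rm Inv}(\mathcal G)$ is a subfield of $\mathbb C(R_kM)$ containing $\mathbb C$, because the invariance condition is preserved by field operations. It is $\Gamma_k$-invariant because the left action of $\mathcal G$ by composition $r\mapsto \sigma\circ r$ commutes with the right action of $\Gamma_k$. Second, ${\rm Sym}(\mathbb F)$ is Zariski closed by construction. That it is a rational groupoid follows by restricting to an open set $M'$ on which all the relevant $f\in\mathbb F$ (finitely many generate, by Noetherianity of the fields involved) are regular along generic fibres. There the defining set is visibly closed under composition and inversion, and one checks smoothness on a smaller open set by generic smoothness.

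\textbf{Reduction to an invariant-theory statement.} Consider $\mathcal G$ acting on $R_kM$. Following the strategy of \cite{bcd2019differential}, associate to $\mathcal G$ the Zariski closed subset $\mathcal G\cdot r \subset R_kM\times R_kM$, that is, the preimage of $\mathcal G$ under the quotient $R_kM\times R_kM\to{\rm Aut}_k(M)$. This is a $\Gamma_k$-invariant (diagonally) closed subvariety $\widetilde{\mathcal G}$, and it is an equivalence relation on the generic point of $R_kM$. Rational differential invariants of $\mathcal G$ are exactly the rational functions constant on the classes of this algebraic equivalence relation. Rosenlicht's theorem then gives two facts:
\begin{itemize}
\item[(i)] the invariant field separates generic orbits, so the generic orbits are the fibres of the rational map $R_kM\dashrightarrow {\rm Spec}$ defined by ${\rm Inv}(\mathcal G)$;
\item[(ii)] conversely, for a subfield $\mathbb F$ with $\mathbb C\subset\mathbb F\subset\mathbb C(R_kM)$, the relation "same values of $\mathbb F$" is a closed equivalence relation, and $\mathbb F$ equals its field of invariants whenever $\mathbb F$ is algebraically closed in $\mathbb C(R_kM)$.
\end{itemize}
One must check that fields of the form ${\rm Inv}(\mathcal G)$ are automatically relatively algebraically closed, or else state the correspondence with that implicit convention. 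I would adopt the convention of \cite[Proposition 2.18]{bcd2019differential}.

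\textbf{Bijectivity.} The key point is that $\Gamma_k$-invariance of $\mathbb F$ is exactly what makes the fibrewise relation $\{(r,r') : f(r)=f(r')\ \forall f\in\mathbb F\}$ descend to the quotient by $\Gamma_k$ as a subset of ${\rm Aut}_k(M)$. With this in hand, the identity ${\rm Sym}({\rm Inv}(\mathcal G))=\mathcal G$ follows from (i), using the density condition (b) to pass from generic points to the Zariski closure. The identity ${\rm Inv}({\rm Sym}(\mathbb F))=\mathbb F$ follows from (ii).

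\textbf{Main obstacle.} I expect the hardest step to be showing that groupoid elements $\sigma$ whose source and target are both generic suffice to recover all of $\mathcal G$, which requires care with the domains ${\rm dom}(f)$. I would handle this by working over the generic point of $M\times M$, treating $\mathcal G$ as a torsor-like object over the field $\mathbb C(M\times M)$, and invoking the cited proposition for the full statement.
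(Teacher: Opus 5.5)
Your plan ends by deferring to \cite[Proposition 2.18]{bcd2019differential}, which is all the paper does here: it gives no argument of its own. The problem lies in the independent sketch, at step (ii) and in how you propose to close it. As you state it, (ii) recovers $\mathbb F$ only when $\mathbb F$ is algebraically closed in $\mathbb C(R_kM)$, and neither of your two remedies works. Fields of invariants are \emph{not} automatically relatively algebraically closed. Take $M=\mathbb C$, $k=1$, coordinates $(\lambda,\lambda_\varepsilon)$ on $R_1\mathbb C$, and the rational subgroupoid $\mathcal G=\{\varphi_\lambda^2=1\}\subset{\rm Aut}_1(\mathbb C)$ of $1$-jets of the maps $\lambda\mapsto\pm\lambda+b$. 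Its invariant field is ${\rm Inv}(\mathcal G)=\mathbb C(\lambda_\varepsilon^2)$. This field is $\Gamma_1$-invariant, and $\lambda_\varepsilon$ is algebraic over it but not in it. The same happens whenever the generic $\mathcal G$-orbits in $R_kM$ are reducible, for instance for action groupoids of disconnected groups. Adopting a closedness convention would therefore change the statement: such $\mathcal G$ would have no partner, and you would not have the bijection with \emph{all} $\Gamma_k$-invariant subfields that the proposition claims.

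What you need instead is a characteristic-zero fact that requires no closedness hypothesis. Let $\mathbb C\subseteq\mathbb F\subseteq\mathbb C(R_kM)$, and let $\phi\colon R_kM\dashrightarrow Y$ be a model with $\phi^*\mathbb C(Y)=\mathbb F$. Suppose $g\in\mathbb C(R_kM)$ is constant on the generic, possibly reducible, fibres of $\phi$. Then the closure of the image of $(\phi,g)\colon R_kM\dashrightarrow Y\times\mathbb P^1$ is generically one-to-one over $Y$. In characteristic zero it is therefore birational onto $Y$, so $g\in\mathbb F$. Here $\mathbb F$ is finitely generated because it is an intermediate field of a finitely generated extension of $\mathbb C$; that, not ``Noetherianity of the fields'', is the correct justification in your first paragraph. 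Combine this with your descent remark: $\Gamma_k$-invariance of $\mathbb F$ makes the classes of $\{(r,r')\colon f(r)=f(r')\ \forall f\in\mathbb F\}$ the generic ${\rm Sym}(\mathbb F)$-orbits. Together these give ${\rm Inv}({\rm Sym}(\mathbb F))=\mathbb F$ for every $\Gamma_k$-invariant $\mathbb F$. In the example, ${\rm Sym}(\mathbb C(\lambda_\varepsilon^2))$ correctly picks up the component $\varphi_\lambda=-1$, which comes from the disconnected fibres of $\lambda_\varepsilon^2$.

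A smaller point concerns (i). Rosenlicht's theorem is about algebraic group actions, whereas you need its analogue for the algebraic equivalence relation $\widetilde{\mathcal G}$ on $R_kM$. That analogue holds, for example via the map $r\mapsto[\widetilde{\mathcal G}_r]$ into a Hilbert scheme over an open set where $\widetilde{\mathcal G}\to R_kM$ is flat, but it is not literally Rosenlicht's theorem and should be stated as such.
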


For a given rational subgroupoid $(s,t)\colon \mathcal G\to M\times M$, there is a Zariski open subet $M'\subseteq M$ such that $\mathcal G|_{M'} = (s,t)^{-1}(M'\times M')$ is a smooth algebraic Lie subgroupoid of ${\rm Aut}_k(M')$. For $x,y\in M'$ we denote $\mathcal G_{xy} = (s,t)^{-1}(\{(x,y)\})$, which is a principal homogeneous space with the right action of the algebraic group $\mathcal G_{xx}$ and also with the compatible left action of $\mathcal G_{yy}$. \\

We also denote $\mathcal G_{x \bullet} = s^{-1}(\{x\})$, $\mathcal G_{\bullet x} = t^{-1}(\{x\})$ and $\mathcal G_{xy} = (s,t)^{-1}(\{(x,y)\})$. Then we have that $t\colon \mathcal G_{x\bullet}|_{M'}\to M'$ is a $\mathcal G_{xx}$-principal homogeneous space where the action is given by the composition.

\subsubsection{$\mathcal D$-Structure of frame bundles}

The implicit derivation with respect to the standard coordinates in $(\mathbb R^n,0)$ induces $n$ canonical differential operators,
$$\delta_i \colon \mathbb C(R_kM) \to \mathbb C(R_{k+1}M)$$

In order to see these differential operators as derivations of a field, we take the inverse limit $RM = \varprojlim R_kM$ of the sequence:
$$\cdots \to R_3M \to R_2M \to R_1M \to R_0M=M$$
as a proalgebraic variety. Elements of $RM$ are formal frames that may or may not be convergent. $RM\to M$ is a principal bundle modeled over the proalgebraic group
$\Gamma = \varprojlim \Gamma_k$ of formal automorphisms of $(\mathbb C^n,0)$. The field of rational functions in $RM$ is:
$$\mathbb C(RM) = \bigcup_{k\geq 0} \mathbb C(R_kM)$$
and thus, the implicit derivative operators become derivations. By setting $\mathcal D = \{\delta_1,\ldots,\delta_n\}$ the set of implicit derivative operators, we have that $(\mathbb C(RM), \mathcal D)$ is a $\mathcal D$-field in the classical sense of Kolchin. 

\subsubsection{Definition of $\mathcal D$-groupoid}

Now let us define ${\rm Aut}(M)$ as the inverse limit of the sequence:
$$\cdots \to  {\rm Aut}_3M \to {\rm Aut}_2M \to {\rm Aut}_1M \to {\rm Aut}_0M = M\times M$$
The elements of ${\rm Aut}(M)$ are formal biholomorphisms of $M$ on itself, which may or may not be convergent. By a closed subset in ${\rm Aut}(M)$, we mean a closed subset in the initial topology with respect to the projections ${\rm Aut}(M)\to{\rm Aut}_k(M)$. Such a closed subset is a sequence of Zariski closed subsets  $Z = \{Z_k\}_{k\in \mathbb N}$ such that:
\begin{enumerate}
    \item[(a)] $Z_k\subseteq {\rm Aut}_kM$.
    \item[(b)] $Z_{k+1}$ dominates $Z_k$ by projection. 
\end{enumerate}

\begin{definition}\label{def_Dgroupoid}
A closed subset $\mathcal G = \{\mathcal G_k\}_{k\in \mathbb N} \subset {\rm Aut}(M)$ is a $\mathcal D$-groupoid of transformations of $M$ if and only if:
\begin{enumerate}
    \item[(a)] For all $k$, $\mathcal G_k$ is a rational subgroupoid of ${\rm Aut}_kM$.
    \item[(b)] For all $f\in {\rm Inv}(\mathcal G_k)$ and $\delta \in \mathcal D$ we have $\delta f \in {\rm Inv}(\mathcal G_{k+1})$.
\end{enumerate}
\end{definition}

This means that if $\mathcal G$ is a $\mathcal D$-groupoid and we consider the field of invariants of any order:
$${\rm Inv}_{\mathcal D}(\mathcal G) = \bigcup_{k\geq 0} {\rm Inv}(\mathcal G_k),$$
condition (b) in Definition \ref{def_Dgroupoid} is equivalent to the assertion that ${\rm Inv}_{\mathcal D}(\mathcal G)$ constitutes a $\mathcal D$-field. We have a correspondence again, between $\mathcal D$-groupoids and differential fields, given in \cite[Proposition 2.16]{bcd2022malgrange}.

\begin{proposition}
The assignment $\mathcal G \leadsto {\rm Inv}_{\mathcal D}(\mathcal G)$ is a bijective correspondence between the set of $\mathcal D$-groupoids of transformations of $M$ and $\Gamma$-invariant $\mathcal D$-subfields of $\mathbb C(RM)$ containing $\mathbb C$.
\end{proposition}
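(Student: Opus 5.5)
The plan is to reduce the statement to the order-by-order correspondence of the previous Proposition (the particular case of \cite[Proposition 2.18]{bcd2019differential}) and then check that the $\mathcal D$-groupoid axiom (b) of Definition \ref{def_Dgroupoid} corresponds exactly to closure under the derivations $\delta_i$.

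\emph{From groupoids to fields.} Let $\mathcal G=\{\mathcal G_k\}$ be a $\mathcal D$-groupoid. For each $k$, ${\rm Inv}(\mathcal G_k)$ is a $\Gamma_k$-invariant subfield of $\mathbb C(R_kM)$ containing $\mathbb C$. Since $\mathcal G_{k+1}$ dominates $\mathcal G_k$ under projection, a $\mathcal G_k$-invariant function pulled back to $R_{k+1}M$ is $\mathcal G_{k+1}$-invariant, because the action on frames commutes with truncation. Hence ${\rm Inv}(\mathcal G_k)\subseteq{\rm Inv}(\mathcal G_{k+1})$, and the union ${\rm Inv}_{\mathcal D}(\mathcal G)$ is a subfield of $\mathbb C(RM)$. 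It is $\Gamma$-invariant because $\Gamma$ acts on $\mathbb C(R_kM)$ through its quotient $\Gamma_k$. By axiom (b) it is stable under $\delta_1,\dots,\delta_n$, so it is a $\mathcal D$-subfield.

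\emph{From fields to groupoids.} Let $K\subseteq\mathbb C(RM)$ be a $\Gamma$-invariant $\mathcal D$-subfield containing $\mathbb C$, and put $K_k=K\cap\mathbb C(R_kM)$. Each $K_k$ is $\Gamma_k$-invariant. To see this, use that the kernel of $\Gamma\to\Gamma_k$ acts trivially on $\mathbb C(R_kM)$, and that $\mathbb C(R_kM)$ is itself $\Gamma$-stable. Set $\mathcal G_k={\rm Sym}(K_k)$; by the previous Proposition this is a rational subgroupoid with ${\rm Inv}(\mathcal G_k)=K_k$.

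We must check three things.
\begin{enumerate}
\item[(i)] The sequence $\{\mathcal G_k\}$ forms a closed subset of ${\rm Aut}(M)$, that is, $\mathcal G_{k+1}$ dominates $\mathcal G_k$.
\item[(ii)] Axiom (b) holds.
\item[(iii)] The two assignments are mutually inverse.
\end{enumerate}
Item (ii) follows from $\delta_i K_k\subseteq K\cap\mathbb C(R_{k+1}M)=K_{k+1}$. Item (iii) follows from the levelwise bijection, because $K=\bigcup_k K_k$ and a $\mathcal D$-groupoid is determined by its levels.

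\emph{Main obstacle: the dominance in (i).} Since $K_k\subseteq K_{k+1}$, we get $\mathcal G_{k+1}\subseteq\pi^{-1}(\mathcal G_k)$. What remains to show is that the projection $\pi(\mathcal G_{k+1})$ is dense in $\mathcal G_k$. I would argue through fields of invariants. The Zariski closure $H$ of $\pi(\mathcal G_{k+1})$ is a rational subgroupoid of ${\rm Aut}_k(M)$, since a projection of a groupoid on the generic point is again one. Its invariants are the functions in $\mathbb C(R_kM)$ whose pullbacks are $\mathcal G_{k+1}$-invariant, namely $K_{k+1}\cap\mathbb C(R_kM)$. This intersection equals $K\cap\mathbb C(R_kM)=K_k$. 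The bijection of the previous Proposition then gives $H=\mathcal G_k$.

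The delicate part is verifying that $\pi(\mathcal G_{k+1})$ is a rational groupoid in the required sense, with a smooth groupoid structure on a dense open set and the density condition. If a direct argument gets messy, I would instead invoke \cite[Proposition 2.16]{bcd2022malgrange}, of which the statement is a direct citation.
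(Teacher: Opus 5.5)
The paper does not prove this statement. It records it as a quotation of \cite[Proposition 2.16]{bcd2022malgrange}, right after the levelwise correspondence for rational groupoids. Your proposal is a sound reconstruction of the proof from that levelwise correspondence, and it is more informative than the paper's treatment. It shows that the $\mathcal D$-structure enters only through axiom (b) of Definition \ref{def_Dgroupoid}, which is literally $\delta_i K_k\subseteq K_{k+1}$ for $K_k=K\cap\mathbb C(R_kM)$. It also shows that the only non-formal input is the dominance $\mathcal G_{k+1}\to\mathcal G_k$ required of closed subsets of ${\rm Aut}(M)$. Your computation ${\rm Inv}\bigl(\overline{\pi(\mathcal G_{k+1})}\bigr)=K_{k+1}\cap\mathbb C(R_kM)=K_k$ shows that this dominance is forced by the field side rather than being an extra hypothesis. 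The citation buys brevity; your route exposes where the geometry actually lies.

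Two places need tightening.

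\emph{First}, you assert that $H=\overline{\pi(\mathcal G_{k+1})}$ is a rational subgroupoid. The density condition is immediate, since $\pi(\mathcal G_{k+1}|_U)\subseteq H|_U$. The smooth-groupoid condition on a dense open set is not automatic, because closures do not commute with the fibred product on which composition is defined. One way to close it is to work on frames:
\begin{enumerate}
\item[(1)] By $\Gamma_k$-invariance of $K_k$, the map $\sigma\mapsto\sigma\circ r$ identifies ${\rm Sym}(K_k)_{x\bullet}$, for generic $x$ and a frame $r$ at $x$, with the common level set of $K_k$ through $r$.
\item[(2)] On the other hand, $\pi(\mathcal G_{k+1})\cdot r=\pi(\mathcal G_{k+1}\cdot\tilde r)$ is a class of the constructible equivalence relation on $R_kM$ induced by the constructible groupoid $\pi(\mathcal G_{k+1})$. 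By your computation, the rational invariants of this relation are exactly $K_k$.
\item[(3)] In characteristic $0$, generic classes of a constructible equivalence relation are the fibres of a rational map whose function field is exactly the field of rational invariants (a Rosenlicht-type statement, via elimination of imaginaries).
\end{enumerate}
Hence these generic classes are the level sets of $K_k$. So $\pi(\mathcal G_{k+1})$ and $\mathcal G_k$ agree over a dense open set, which gives $H=\mathcal G_k$ without separately proving that $H$ is a groupoid.

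\emph{Second}, in (iii) the injectivity of $\mathcal G\leadsto{\rm Inv}_{\mathcal D}(\mathcal G)$ requires ${\rm Inv}_{\mathcal D}(\mathcal G)\cap\mathbb C(R_kM)\subseteq{\rm Inv}(\mathcal G_k)$. This does not follow from ``$\mathcal G$ is determined by its levels''. It is your dominance computation run backwards: it uses that $\pi(\mathcal G_m)$ is dense in $\mathcal G_k$ for $m\geq k$, which holds by the definition of a closed subset.
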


There is a particular number $k$ such that ${\rm Inv}_{\mathcal D}(\mathcal G)$ is generated by ${\rm Inv}(\mathcal G_k)$. This $k$ is the order of $\mathcal G$, and thus $\mathcal G$ is completely determined by $\mathcal G_{k}$.
It is also known \cite{malgrange2001groupoide} that the projection $\mathcal G \to \mathcal G_k$ is onto.\footnote{This is not a property held by all closed subsets of ${\rm Aut}(M)$, and it is consequence of it being closed in a thinner topology - Kolchin's topology - that we are not discussing here.}\footnote{The system of partial differential equations $\mathcal G_k\subset {\rm Aut}_k(M)$ determining $\mathcal G$ is known in the mathematical literature as a Lie equation \cite{munoz2000integrability, kumpera1972lie}.}

\subsubsection{$\mathcal D_M$-structure of ${\rm Aut}(M)$ and its Kolchin topology}

There is an alternative definition of $\mathcal D$-groupoid that makes use of the $\mathcal D_M$-structure of ${\rm Aut}(M)$. Here, by $\mathcal D_M$ we understand the non-commutative ring of differential operators in $\mathbb C(M)$ spanned by the Lie algebra of rational vector fields $\mathfrak X(M)$. The main point here is that any vector field in $M$ can be extended to a total derivative operator that differentiates functions of ${\rm Aut}_k(M)$ into functions of ${\rm Aut}_{k+1}(M)$. If $X$ is a rational vector field in $M$ then,
$$X^{\rm tot}\colon \mathbb C({\rm Aut}_k(M))\to \mathbb C({\rm Aut}_{k+1}(M)), 
\quad (X^{\rm tot}f)(j_x^{k+1}\sigma) = \vec X_x(f\circ j^k\sigma).$$
In particular, derivations that are well defined in some $\mathcal O_M(U)$ will give us derivations of the ring of regular functions in ${\rm Aut}_k(U)$, this being an open subset of ${\rm Aut}_k(M)$. \\

For the sake of simplicity, let us assume that $M$ is an affine space, there are $n$ regular vector fields linearly independent at each point of $M$, that is, the module of derivations of $\mathcal O_M$ is free of rank $M$. The general case should be worked in locally. \\

An ideal $\mathcal J \subset \mathcal O_{\rm Aut}(M)$ is a $\mathcal D_M$-ideal if for every function $f\in \mathcal J$ the total derivatives of $f$ are also in $\mathcal J$. A subset $Z = \{Z_k\}_{k\in\mathbb N} \subset {\rm Aut}(M)$ if it is the set of zeros of a $\mathcal D_M$-ideal. Kolchin closed subsets are also termed $\mathcal D_M$-varieties. An alternative, and historically original \cite{malgrange2001groupoide}, definition for $\mathcal D$-groupoid goes as follows. \emph{A $\mathcal D$-groupoid in $M$ is a $\mathcal D_M$-variety $\mathcal G = \{\mathcal G_k\}_{k\in \mathbb N} \subseteq {\rm Aut}(M)$ such that for all $k$, $\mathcal G_k$ is a  rational subgroupoid of ${\rm Aut}_k(M)$. }

\subsubsection{Lie algebra of a $\mathcal D$-groupoid}

There is a unique way to extend vector fields on $M$, as derivations of $\mathbb C(M)$, to a $\Gamma_k$ invariant derivation of each $\mathbb C(R_kM)$ that commutes with the canonical $\mathcal D$-structure $\mathcal D = (\delta_1,\ldots,\delta_n)$. The extension $X^{(k)}$ is completely determined by the equations:
$$X^{(k+1)} \delta_j I = \delta_j X^{(k)}I.$$ 
The flow of the vector field $X^{(k)}$ in $R_kM$ codifies the flow induced in the space of frames $R_kM$ by the flow of $X$ in $M$. Moreover, the vector field $X^{(k)}$ is $\Gamma_k$-invariant. The dynamical interpretation of $X^{(\infty)}$ is just the flow induced by the flow of $X$ in the frame space, in the sense:
$$X^{(k)}(j^k_0r_x) = \left.\frac{d}{dt}\right\vert_{t=0} j^k_0(\exp(tX)\circ r_x).$$

In the product $R^kM\times R^kM$ we may consider the vector field $(0,X^{(k)})$ that represents $X^{(k)}$ acting on the second component. This vector field is $\Gamma_k$-invariant, and it has a projection by the canonical quotient map $R^kM\times R^kM\to {\rm Aut}_k(M)$, $(j_0^kr_x,j_0^kr_y)\mapsto j_x^k(r_y\circ r_x^{-1})$. We define $\widetilde X^{(k)}$ as its projection on ${\rm Aut}_k(M)$. The dynamical interpretation of $\widetilde X^{(k)}$ is the post-composition with the flow of $X$:
$$\widetilde X^{(k)}(j_x^k \sigma) =  \left.\frac{d}{dt}\right\vert_{t=0} j_x^k((\exp tX)\circ \sigma).$$

Let us consider the linear bundle $J^k(TM/M)\to M$ of $k$-jets of vector fields. The value of $\widetilde X^{(k)}$ at $e_x = j^k_x{\rm id}$ depends only of the $k$-jet of $X$ at $x$. Therefore we have a well defined regular map:
$$\iota_k \colon J^{k}(TM/M) \to T({\rm Aut}_k(M)), \quad j^k_xX \mapsto \widetilde X^{(k)}(e_x).$$
This map identifies the space $(J^k(TM/M))_x$ with $T_{j_x{\rm Id}}({\rm Aut}(M)_{x\bullet})$.\footnote{This is the usual definition of the Lie algebroid of the Lie groupoid ${\rm Aut}_k(M)$, and thus $J^k(TM/M)\to M$ can be seen as a realization of the Lie algebroid of ${\rm Aut}_k(M)$. However, the Lie algebroid bracket is somehow different from the Lie algebra structure that we find in the sections of ${\rm Lie}(\mathcal G)$.} Let us define ${\rm Lie}(\mathcal G_k) = \iota_k^{-1}(T\mathcal G_k)$, so we have an algebraic family vector fields ${\rm Lie}(\mathcal G_k)\to M$ which at the generic point is a vector bundle over $M$ whose rank is the dimension of $\mathcal G_k$ minus $n$. The fiber of ${\rm Lie}(\mathcal G_k)$ on $x$ is the vector space of $k$-jets of vector fields $X$ having the property:
$$\left.\frac{d}{dt}\right\vert_{t=0} j_x^k(\exp tX) \in T\mathcal (G_k)_{x\bullet}.$$
That is, the flow of $X$ near $x$ gives a local parametric group of transformations which is \emph{tangent to $\mathcal G_k$.} \\

Note that the $k$-jet at $x$ of a Lie bracket $[X,Y]$ depends on the $(k+1)$-jets at $x$ of the vector fields $X$ and $Y$. Therefore, we do not obtain a bundle of Lie algebras, our Lie bracket takes values on jets of smaller order:
$${\rm Lie}(\mathcal G_{k+1}) \times_M {\rm Lie}(\mathcal G_{k+1}) \to {\rm Lie}(\mathcal G_{k}), 
\quad (j^{k+1}_x X, j^{k+1}_x Y) \mapsto j_x^k[X,Y].$$

However, this prolongation process can be carried out to infinite order, so that from a vector field $X$ in $M$ we obtain a $\Gamma$-invariant derivation $X^{(\infty)}$ of the field $\mathbb C(RM)$ and a derivation $\widetilde X^{(\infty)}$ of the field $\mathbb C({\rm Aut}(M))$. Then, we have the limit:
$$\varprojlim {\rm Lie}(\mathcal G_k) = {\rm Lie}(\mathcal G) \subset J(TM/M) $$
which is, on the generic point of $M$, an involutive linear system of partial differential equations in the linear bundle $TM/M$ and a bundle of Lie algebras.

The equations defining ${\rm Lie}(\mathcal G)$ can be found in terms of the differential invariants of $\mathcal G$:
$${\rm Lie}(\mathcal G) = \{j_x X \in J(TM/M) \colon X^{(\infty)}{\rm Inv}_{\mathcal D}(\mathcal G)=0\}$$
Note that if the differential invariants of $\mathcal G$ are preserved by $X^{(\infty)}$ then the flow of $\widetilde X^{(\infty)}$ in ${\rm Aut}(M)$ is tangent to $\mathcal G$.



\subsection{Finite dimensional $\mathcal D$-groupoids}
In this paper we are interested in $\mathcal D$-groupoids of finite dimension. If $\mathcal G$ is of finite dimension and order $\ell$, then we have that there is $\ell\in \mathbb N$ such that $\mathcal G \simeq \mathcal G_\ell$. Our sequence stabilizes and we have:

$$\mathcal G \simeq \cdots \simeq \mathcal G_{\ell+1} \simeq \mathcal G_\ell \to \mathcal G_{\ell-1} \to \cdots \to M\times M.$$

We can always take $k$ to be the maximun between such number $\ell$ and the order of $\mathcal G$. We say that a point $x\in M$ is regular for $\mathcal G$ if there is a Zariski open neighborhood $M'$ of $x$ such that $\mathcal G|_{M'}\subseteq {\rm Aut}(M)$ is a smooth Lie groupoid. \\

Given a finite dimensional groupoid $\mathcal G\subset {\rm Aut}(M)$ the dimension of $\mathcal G$ over $M$ is the dimension of the fibers of the source $s$, or the target $t$ projection. We have $\dim_M(\mathcal G) = \dim(\mathcal G)-\dim(M)$. We will say that $\mathcal G$ is $0$-dimensional or finite if $\dim_M(\mathcal G) = 0$. 

\subsubsection{Adapted frames}

Let $\mathcal G\subset {\rm Aut}(M)$ be a finite dimensional transitive\footnote{In the sense that $\mathcal G_{0} = M\times M$.} $\mathcal D$-groupoid. We also fix and order $k$ big enough so that $\mathcal G\simeq \mathcal G_k$, and such that $\mathcal G$ is determined by its differential invariants of order $\leq k$. Without loss of generality, after restriction to a suitable open subset, we also assume that $\mathcal G_k$ is a smooth Lie subgroupoid of ${\rm Aut}_k(M)$. \\

From now on, we identify $\mathcal G$ with its truncation to order $k$. However, it is important to remember that any element $\sigma_{xy}\in \mathcal G$ is an actual germ of local biholomorphism $\sigma_{xy}\colon (M,x)\to (M,y)$, completely determined by its $k$-jet $j^k_x\sigma_{xy}\in \mathcal G_k$. \\

Let us fix a point $x$, and lets recall that $\mathcal G_{xx}$ is the stabilizer group of $x$ en $\mathcal G$.\footnote{Since $\mathcal G$ is transitive, all the points have isomorphic stabilizer groups.}

In the transive case, the moving frame method gives us an effective way of computing all differential invariants. Note that the group $\mathcal G_{xx}$ acts on the space of frames on $x$, $(R_kM)_x$ by composition. Let us fix a frame $j_0^kr_x \in (R_kM)_x$. It induces isomorphisms:
$$i_1\colon \Gamma_k \to (R_kM)_x, \quad j_0^k\gamma \mapsto j_0^k(r_x \circ \gamma)$$
$$i_2 \colon \Gamma_k \to {\rm Aut}_k(M)_{xx}\quad j_0^k\gamma \mapsto j_x^k(r_x \circ \gamma\circ r_x^{-1})$$
that conjugate the action of $\mathcal G_{xx}$ in $(R_kM)_x$ with the action of the algebraic subgroup $\Gamma_k' = j_0^k(r_{x}^{-1}\mathcal G_{xx}r_x) \subset \Gamma_k$ on $\Gamma_k$. Therefore, we have the existence of the quotient $V = (R_kM)_x/\mathcal G_{xx} \simeq \Gamma_k/\Gamma_k'$.

\begin{proposition}\label{prop_dinvariant}
    The map    $$I\colon R_kM \to V, \quad j_0^kr_y \mapsto [\sigma_{xy}\circ r_y]$$
where $\sigma_{yx}$ is any element of $\mathcal G_{yx}$, is a well defined regular maps, invariant by the action of $\mathcal G$, and it identifies ${\rm Inv}(\mathcal G)$ with $\mathbb C(V)$. For any $v\in V$ the preimage $I^{-1}(\{v\})$ with its projection to $M$ is a reduction of the bundle $R_kM\to M$ to a subgroup of $\Gamma_k$ isomorphic to $\mathcal G_{xx}$. All these principal bundles are isomorphic. 
\end{proposition}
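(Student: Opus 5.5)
The plan is to check, in order, that $I$ is well defined, that it is regular, that it is $\mathcal G$-invariant, that it identifies ${\rm Inv}(\mathcal G)$ with $\mathbb C(V)$, and finally that its fibres give reductions of $R_kM$.

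\emph{Well-definedness.} Fix $y$ and $r_y$. By transitivity $\mathcal G_{yx}\neq\emptyset$. If $\sigma_{yx},\sigma'_{yx}\in\mathcal G_{yx}$, then $\sigma'_{yx}=g\circ\sigma_{yx}$ with $g=\sigma'_{yx}\sigma_{yx}^{-1}\in\mathcal G_{xx}$. Hence $\sigma'_{yx}\circ r_y$ and $\sigma_{yx}\circ r_y$ lie in the same $\mathcal G_{xx}$-orbit of $(R_kM)_x$, so $[\sigma_{yx}\circ r_y]\in V$ does not depend on the choice.

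\emph{Regularity.} The natural route is to realise $I$ as a quotient map. Consider the regular map $\Phi\colon \mathcal G_{\bullet x}\times_M R_kM\to (R_kM)_x$, $(\sigma_{yx},r_y)\mapsto \sigma_{yx}\circ r_y$; it is regular because composition in ${\rm Aut}_k(M)$ is given by Faà di Bruno polynomials. The target map $\mathcal G_{\bullet x}\to M$ is a principal $\mathcal G_{xx}$-bundle, using the restriction to the open set where $\mathcal G$ is a smooth Lie groupoid. It is therefore faithfully flat and admits local sections in the étale (or, after shrinking, Zariski) topology. Composing $\Phi$ with $(R_kM)_x\to V$ gives a regular map that is constant on the fibres of $\mathcal G_{\bullet x}\times_M R_kM\to R_kM$, since these are $\mathcal G_{xx}$-orbits. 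By descent along a faithfully flat morphism it factors through a regular $I$. This descent step is where I expect the main technical care. Concretely, one can instead use local sections $y\mapsto\sigma_{yx}$ on an étale cover and check agreement on overlaps using the first step. The quotient $V\simeq\Gamma_k/\Gamma_k'$ exists as a quasi-projective variety because $\Gamma_k'$ is an algebraic subgroup.

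\emph{Invariance.} For $\tau\in\mathcal G_{yz}$ we have $I(\tau\circ r_y)=[\sigma_{zx}\tau r_y]$, and $\sigma_{zx}\tau\in\mathcal G_{yx}$, so this equals $I(r_y)$. In addition, $I$ is equivariant for the right $\Gamma_k$-actions on $R_kM$ and on $V=\mathcal G_{xx}\backslash (R_kM)_x$.

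\emph{Identification of invariants.} Pullback gives an injection $I^*\colon\mathbb C(V)\to{\rm Inv}(\mathcal G)$; it is injective because $I$ is surjective, as already $I|_{(R_kM)_x}$ is the quotient map. Conversely, let $f\in{\rm Inv}(\mathcal G)$. Its restriction to $(R_kM)_x$ is $\mathcal G_{xx}$-invariant. Choose $x$ generic so that this restriction is defined. Then the restriction descends to a rational function $\bar f$ on $V$, since $V$ is a geometric quotient of a homogeneous space and invariant rational functions descend. For generic $r_y$ we have $f(r_y)=f(\sigma_{yx}r_y)=\bar f(I(r_y))$, so $f=I^*\bar f$.

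\emph{Reductions.} The map $I$ is surjective and $\Gamma_k$-equivariant, and it induces on each fibre $(R_kM)_y\cong (R_kM)_x$, via $\sigma_{yx}$, the quotient map by $\mathcal G_{xx}$. Hence $P_v=I^{-1}(v)$ meets each fibre $(R_kM)_y$ in a single orbit $\sigma_{yx}^{-1}(\text{orbit }v)$. Pick $r_x$ in the $\mathcal G_{xx}$-orbit $v$. The stabilizer of $v$ in $\Gamma_k$ is $H_v=r_x^{-1}\mathcal G_{xx}r_x\cong\mathcal G_{xx}$, and $H_v$ acts simply transitively on $P_v\cap(R_kM)_y$. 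Smoothness of $P_v\to M$ follows from generic smoothness together with homogeneity: $\mathcal G$ acts transitively on the base and preserves $P_v$. So $P_v$ is a principal $H_v$-subbundle. Finally, for $v'=v\cdot\gamma$ the right action $R_\gamma$ maps $P_v$ isomorphically onto $P_{v'}$, intertwining $H_v$ with $H_{v'}=\gamma^{-1}H_v\gamma$. Since $\Gamma_k$ acts transitively on $V$, all the $P_v$ are isomorphic.
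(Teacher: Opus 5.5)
The paper states this proposition without proof; it is presented as a consequence of the identifications $i_1$, $i_2$ and $V\simeq\Gamma_k/\Gamma_k'$ made just before it. So there is no argument of the authors to compare with. Judged on its own, your proof is correct. The well-definedness, the $\mathcal G$-invariance and $\Gamma_k$-equivariance of $I$, the descent of $\mathcal G_{xx}$-invariant rational functions along the torsor $(R_kM)_x\to V$, the stabilizer $H_v=r_x^{-1}\mathcal G_{xx}r_x$ acting simply transitively on $P_v\cap(R_kM)_y$, and the isomorphisms $R_\gamma\colon P_v\to P_{v\gamma}$ are all right. A few details need repair, none serious.

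\emph{The torsor used for descent.} On $\mathcal G_{\bullet x}$ the target is constant, so the $\mathcal G_{xx}$-torsor you descend along is the \emph{source} map $s\colon\mathcal G_{\bullet x}\to M$, with $\mathcal G_{xx}$ acting by left composition. Drop the parenthetical claim that local sections exist Zariski-locally after shrinking. A torsor under a non-special group need not be trivial over any Zariski open set. For example, for the symmetries of $dz^2/z$ on $\mathbb C^*$ one has $\mathcal G_{xx}=\{\pm1\}$, and $s\colon\mathcal G_{\bullet x}\to\mathbb C^*$ is the irreducible double cover $\sigma'(y)^2=x/y$. This costs nothing, since descent of morphisms along the faithfully flat torsor $\mathcal G_{\bullet x}\times_M R_kM\to R_kM$ is all you use.

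\emph{The base point.} The point $x$ is fixed before $f\in{\rm Inv}(\mathcal G)$ is chosen, so ``choose $x$ generic'' needs a sentence. Replacing $x$ by another regular point $x'$ changes $(V,I)$ only by the $\Gamma_k$-equivariant isomorphism $[r]\mapsto[\tau\circ r]$, for any $\tau\in\mathcal G_{xx'}$. Hence you may restrict $f$ to a generic fibre.

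\emph{Smoothness of $P_v\to M$.} Your appeal to ``homogeneity under $\mathcal G$'' is not literally algebraic, since $\mathcal G$ acts through jets rather than algebraic automorphisms of $M$. It can be justified analytically, using that elements of $\mathcal G$ are germs of biholomorphisms whose jets at nearby points stay in $\mathcal G$. The cleanest fix, though, is to observe that for $r_x\in v$ one has $P_v=\{\tau\circ r_x\colon\tau\in\mathcal G_{x\bullet}\}$. The map $\tau\mapsto\tau\circ r_x$ is the restriction of the isomorphism ${\rm Aut}_k(M)_{x\bullet}\cong R_kM$. It intertwines right composition by $\mathcal G_{xx}$ with the right action of $H_v$, so $P_v\to M$ is just the principal bundle $t\colon\mathcal G_{x\bullet}\to M$. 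This is exactly the identification $i\colon\mathcal G_{x\bullet}\xrightarrow{\sim}R_{\mathcal G}M$ the paper uses later. It makes the reduction statement immediate and removes the need for generic smoothness there.
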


\begin{definition}
Let $\mathcal G$ be a finite dimensional $\mathcal D$-groupoid such that $\mathcal G\hookrightarrow {\rm Aut}_k(M)$. Let us assume that $\mathcal G$ is also a smooth Lie groupoid. Let $I\colon R_kM \to V$ be a complete differential invariant map as in proposition \ref{prop_dinvariant}. Each bundle of the form $I^{-1}(\{v\})\to M$ from $v\in V$ is called a bundle of adapted frames for $\mathcal G$. 
\end{definition}

\begin{remark}
From \cite{Casale2004, Casale2004SurLG} it follows that this phenomenon actually represents the general situation for transitive Lie groupoids: every such groupoid arises as the groupoid of symmetries of a meromorphic geometric structure.
\end{remark}

\subsubsection{$\mathcal D$-Lie algebra and Lie algebroid}

Here we assume that $\mathcal G$ is of finite dimension $\dim_M\mathcal G = r$, and $\mathcal G \simeq \mathcal G_k$ is a smooth Lie subgroupoid of ${\rm Aut}_k(M)$. Therefore, we have ${\rm Lie}(\mathcal G) \simeq {\rm Lie}(\mathcal G_k) \subset  J^k(TM/M)$, and then ${\rm Lie}(\mathcal G)\to M$ is a linear bundle of rank $r$ of Lie algebras. The contact system on the jet bundle $T(J^kM/M)$ induces by restriction to ${\rm Lie}(\mathcal G)\to M$ a flat linear connection $\nabla^{{\rm Lie}(\mathcal G)}$, and it is stable under the Lie bracket in the sense that the Lie bracket of any two horizontal sections is also a horizontal section. \\

Horizontal sections of $\nabla^{{\rm Lie}(\mathcal G)}$ are of the form $jX\colon x\mapsto j_xX$ where $X$ is a vector fields in $M$ whose flow is in $\mathcal G$. We say that $X$ is an infinitesimal generator of $\mathcal G$. The elements of the Lie algebra ${\rm Lie}(\mathcal G)_x$ are the germs at $x$ of the infinitesimal generators of $\mathcal G$.



\subsubsection{Example: projective structures on the complex line}

Let $M= \mathbb C$ be the complex line, with coordinate $\lambda$. By a projective structure in some open subset $M'\subset M$ we mean
a maximal atlas in $M$ with values on the projective line $\overline{\mathbb C}$ such that the transition functions are M\"obius transformations on their domain in $\overline{\mathbb C}$. The chart functions $\tau\colon \mathcal U \to \overline{\mathbb C}$ should satisfy the differential equation:
\begin{equation}\label{eq_s1}
S_\lambda(\tau) = R(\lambda), \quad S_{\lambda}(\tau) = \frac{\tau_{\lambda\lambda\lambda}}{\tau_\lambda} - \frac{3}{2}\left(\frac{\tau_{\lambda\lambda}}{\tau_{\lambda}}\right)^2
\end{equation}
where $R(\lambda)$ is defined on $M'$. We say that the projective structure is meromorphic, or rational, if $R(\lambda) \in \mathbb C(\lambda)$ is a rational function. Equation \eqref{eq_s1} is equivalent to, 
\begin{equation}\label{eq_s2}
S_\tau(\lambda) + R(\lambda)\lambda_{\tau}^{2} = 0.
\end{equation}
Now, let us consider ${\rm Aut}_3(\mathbb C)$, with coordinates $\lambda, \varphi, \varphi_{\lambda}, \varphi_{\lambda\lambda}, \varphi_{\lambda\lambda}$. Let us derive the equations for the symmetries $\varphi$ of \eqref{eq_s2}, in the sense that $\lambda(\tau)$ is a solution of \eqref{eq_s2}, if and only if $\varphi(\lambda(\tau))$ is also a solution. By the chain rule for the Schwarzian derivative we obtain,

\begin{equation}\label{eq_kummer}
S_\lambda(\varphi) + \varphi_\lambda^2R(\varphi)=R(\lambda)
\end{equation}

Which is the equation of a groupoid $\mathcal G_3 \subset {\rm Aut}_3(\mathbb C).$ Let us compute the third order differential invariant giving rise to the adapted frame bundles. \\ 
Let us consider a frame $(\lambda,\lambda_\varepsilon,\lambda_{\varepsilon\varepsilon},\lambda_{\varepsilon\varepsilon\varepsilon}).$ If we consider a $3$-jet of holomorphism $(\lambda,\varphi,\varphi_\lambda,\varphi_{\lambda\lambda},\varphi_{\lambda\lambda\lambda})$, the composition gives us a new frame $(\varphi,\varphi_\varepsilon,\varphi_{\varepsilon\varepsilon},\varphi_{\varepsilon\varepsilon\varepsilon})$ where, as before for the chain rule:
$$S_{\varepsilon}(\varphi) = S_{\lambda}(\varphi) \lambda_{\varepsilon}^2 + S_\varepsilon(\lambda)$$
and replacing $S_{\lambda}(\varphi)$ from the groupoid equation \eqref{eq_kummer} we obtain:
$$S_{\varepsilon}(\varphi) + R(\varphi)\varphi_\varepsilon^2 = S_{\varepsilon}(\lambda) + R(\lambda)\lambda_\varepsilon^2$$
and therefore:
\begin{equation}\label{eq_invariant}
I(\lambda,\lambda_{\varepsilon},\lambda_{\varepsilon\varepsilon},\lambda_{\varepsilon\varepsilon\varepsilon}) = R(\lambda)\lambda_\varepsilon^2 + \frac{\lambda_{\varepsilon\varepsilon\varepsilon}}{\lambda_\varepsilon} - \frac{3}{2} 
\left(\frac{\lambda_{\varepsilon\varepsilon}}{\lambda_\varepsilon}\right)^2
\end{equation}

is a differential invariant for the groupoid $\mathcal G_3$. Let us compute the equation for the Lie algebra of $\mathcal G_3$. Let $X = a(\lambda)\frac{\partial}{\partial \lambda}$ a vector field in $\mathbb C$. Its third order prolongation $X^{(3)}$ is then:
$$X^{(3)} = a\frac{\partial}{\partial \lambda} + a_\lambda\lambda_\varepsilon\frac{\partial}{\partial \lambda_\varepsilon}+
(a_{\lambda\lambda} \lambda_\varepsilon^2 + a_\lambda \lambda_{\varepsilon\varepsilon})\frac{\partial}{\partial \lambda_{\varepsilon\varepsilon}} $$
\begin{equation}\label{eq_3rdprolongation}
+ (a_{\lambda\lambda\lambda}\lambda_\varepsilon^2 + 3a_{\lambda\lambda}\lambda_\varepsilon\lambda_{\varepsilon\varepsilon} 
+ a_\lambda\lambda_{\varepsilon\varepsilon\varepsilon} ) \frac{\partial}{\partial \lambda_{\varepsilon\varepsilon\varepsilon}}
\end{equation}
a direct computation gives us:
$$X^{(3)}I = (a_{\lambda\lambda\lambda} + 2R(\lambda) a_{\lambda} + R'(\lambda)a)\lambda_\varepsilon^2$$
So that, we find the linear differential equation defining the Lie algebra of $\mathcal G_3,$
\begin{equation}\label{eq_sym2}
    a_{\lambda\lambda\lambda} + 2R(\lambda)a_{\lambda} + R'(\lambda)a = 0
\end{equation}




\subsection{Picard-Vessiot theory}

There are several presentations of Picard-Vessiot theory, also known as linear Differential Galois theory. In \cite[Appendix A]{bc2017} we provided a geometrical presentation that is equivalent to Kolchin's theory of strongly normal extensions of finite transcendence degree over $\mathbb C$. We recall here the main points of the theory in a simplified way, that applies to the examples and applications developed in this paper. We direct the reader to the aforementioned reference for a full exposition.

\subsubsection{Definition of the Galois group}

Let us consider $G$ an algebraic group, $\pi\colon P\to M$ a connected principal bundle modeled over $G$, and a flat rational connection with connection form $\theta\colon P \dasharrow  {\rm Lie}(G)$. 
Let us call $M'$ a Zariski open subset of $M$ such that $\theta$ is regular on $P|_{M'} = P'$. Then $\ker(\theta) = \mathcal F \subset TP'$ is a foliation on $P'$. For a given point $p\in P'$ let $Z_p = \overline{\mathcal L_p}$ be the Zariski closure of the leave of $\mathcal F$ passing through $p$. Then, we can define a subgroup of $G$,
$${\rm Gal}(\theta, p) = \{g\in G \colon Z_pg = Z_p \} \subseteq G$$
which is called the Galois group of the connection given by $\theta$ at the point $p$. It has the following properties:
\begin{enumerate}
\item ${\rm Gal}(\theta, p)$ is an algebraic subgroup of $G$.
\item $Z_p \to M'$ is an irreducible principal bundle modeled over ${\rm Gal}(\theta, p)$.
\item All Galois groups are isomorphic and belong to the same conjugation class as subgroups of $G$.
\end{enumerate}

Thus, we may write ${\rm Gal}(\theta)$ to refer to the Galois group, as an abstract algebraic group independent of the point $p$ in $P'$. We also write $\mathfrak{gal}(\theta)$ to refer to the Lie algebra of the Galois group. Note that $\mathfrak{gal}(\theta) = 0$ if and only if the Galois group is finite. 

\subsubsection{Galois group for linear connections}

The above construction applies to linear connections in the following way. Let us consider $E\to M$ a linear bundle of rank $k$, and $\nabla$ a flat linear rational connection. As previously, we consider $M'$ the domain of the connection and $E'$ the restriction of $E$ to $M$, so that, $\nabla$ is a regular connection in $M'$. 
Let $E_x$ be the fiber at $x \in M'$. The bundle $P \subset E_x^\ast \otimes E$ whose fiber at $x' \in M'$ is the set of linear isomorphisms from $E_x$ to $E_{x'}$ is a ${\rm GL}(E_x)$-principal bundle on $M'$. The connection $\nabla$ induces a principal connection with connection form 
$$
\theta_\nabla(X_p) = p^{-1} \circ \nabla_{\pi_*(X_p)} p  \in \mathfrak{gl}(V)
$$ 
where $p : E_x \to E_{x'}$ is a linear isomorphism and $\pi_*(X_p)$ is the projection of the vector $X_p\in T_p(P)$ on $M'$. Now $P$ has a special point, ${\rm id} : E_x \to E_x$ and ${\rm Gal}(\theta_\nabla, {\rm id})$, denoted by ${\rm Gal}(\nabla, x)$, is the Galois group of $\nabla$ at $x$. Note that ${\rm Gal}(\nabla, x)\subset {\rm GL}(E_x)$, so that the space $E_x$ is naturally endowed with a representation of the Galois group ${\rm Gal}(\nabla, x)$. 

\begin{remark}\label{rmk_products}
An important property of this construction is that it is compatible with direct product. If $E_i\to M_i$
with $i=1,\ldots, k$ is a finite family of vector bundles endowed with flat rational linear connections $\nabla_i$, then we may consider the direct product connection $\nabla = \prod \nabla_i$ in the direct product bundle $E = \prod E_i \to M = \prod M_i$, and in such case ${\rm Gal}(\nabla, (x_1,\ldots,x_k)) = \prod {\rm Gal}(\nabla_i, x_i)$, acting on $E_{(x_1,\ldots,x_k)}= \prod (E_i)_{x_i}$
\end{remark}


\subsection{Parallelisms}

This section is a recap of the main concepts and results presented in \cite{bc2017}. Let $\mathfrak g$ be a complex Lie algebra, and $M$ a smooth complex irreducible algebraic variety over $\mathbb C$ with dim $M = n$. 

\begin{definition}
A \emph{$\mathfrak g$-parallelism} in $M$ is a Lie algebra morphism $\varphi\colon \mathfrak g \to \mathfrak X(M)$ such that for all $x$ in $M$ the linear map ${\rm ev}_x\circ \varphi\colon \mathfrak g \to T_xM$, $A\mapsto \varphi(A)_x$ is an isomorphism.  
The image of $\varphi(\mathfrak g) \subset \mathfrak X(M)$ is the Lie algebra of parallel vector fields of $\varphi$. A parallelism in $M$ is a $\mathfrak g$-parallelism for some Lie algebra $\mathfrak g$.
\end{definition}

By duality, any $\mathfrak g$-parallelism can be codified into a \emph{$\mathfrak g$-parallelism form} $\omega\colon TM\to \mathfrak g$ by defining $\omega(X_x) = ({\rm ev}_x\circ \varphi)^{-1}(X_x)$. It is possible to give the following definition. It follows that any $\mathfrak g$-valued $1$-form $\omega$ in $M$ is a parallelism form if and only if it satisfies:
\begin{itemize}
    \item[1.] For all $x\in M$ $\omega_x\colon T_xM \to \mathfrak g$ is a linear isomorphism.
    \item[2.] If $A$, $B$ are elements of $\mathfrak g$ and $X$, $Y$ are the vector fields in $M$ such that $\omega(X) =A$ and $\omega(Y) = B$ then $\omega([X,Y]) = [A,B]$, or equivalently:
    \item[2'.] It satisfies the Maurer-Cartan equation $d\omega + \frac{1}{2}[\omega,\omega] = 0$.
\end{itemize}

With this dual definition, a parallel vector field with respect the parallelism form $\omega$ is a vector field $X$ with constant $\omega(X)\in \mathfrak g$. A $\mathfrak g$-parallelism in $M$ is considered to be regular, rational, analytic, formal, etc., depending on the degree of regularity of its corresponding parallelism form. A \emph{$\mathfrak g$-parallelized variety} is then a pair $(M,\omega)$ where $\omega$ is a $\mathfrak g$-parallelism form, and a conjugation of parallelized varieties $f\colon (M,\omega)\to (N,\varpi)$ is a map $f\colon N\to M$ such that $f^*(\varpi) = \omega$. As before we may consider regular, rational, analytic, formal morphisms.\\

The main example of parallelized variety is $(G, \omega_G)$ where $G$ is an algebraic group and $\omega_G\colon TG\to {\rm Lie}(G) = T_eG$ is its structure for $\omega_G(X_g) = dL_g^{-1}(X-g)$. Parallel vector fields of $\omega_G$ are left invariant vector fields. Note that there is a reciprocal parallelism defined by $\omega_G^{\rm rec}(X_g) = - dR_{g}^{-1}(X_g)$, whose parallel vector fields are the right invariant vector fields in $G$. Note that the inversion $(G,\omega_G) \to (G,\omega_G^{\rm rec})$, $g\mapsto g^{-1}$ is a regular conjugation between the parallelisms $\omega_G$ and $\omega_G^{\rm rec}$.\\

Given varieties $(M,\omega)$ and $(N,\varpi)$ with rational parallelisms, we are interested in studying birational conjugations, but also multivalued algebraic conjugations $f\colon M\dasharrow N$. A way of understanding this mutivalued algebraic maps is through the notion of \emph{isogeny}. We say that $(M,\omega)$ and $(N,\varpi)$ are isogenous if there is an irreducible parallelized variety $(U,\theta)$ and dominant maps $\pi_M \colon U\to M$, $\pi_N\colon U \to  N$ such that $\pi_M^*(\omega) = \pi_N^*(\varpi) = \theta$. We may assume that $U$ is a covering of some open subvarieties of $M$ and $N$. This notion extends the classical notion of isogeny from algebraic groups.

\begin{example}
  Let $(G,\omega_G)$ be a connected algebraic group with its structure form. Let $H\subset G$ be a finite group, so that the form $\omega_G$ is projectable to the quotient:
  $$\pi\colon M = G\to H\backslash G = \{Hg \colon g\in G\}$$
  then $(M, \pi_*(\omega_G))$ is isogenous to $(G,\omega_G)$. Moreover, let $f\colon N\dasharrow M$ be a dominant map, with $N$ irreducible variety of the same dimension that $N$. Then $(N,f^*(\pi_*(\omega_G)))$
  is isogenous to $(G,\omega_G)$.  
\end{example}

\subsection{Lie connections}

In this section, we deal with affine connections in $M$, that is, linear connections in $TM\to M$. First, let us note that given an affine connection $\nabla$ we can always define a reciprocal connection by the formula:
$$\nabla^{\rm rec}_X Y = \nabla_Y X + [X,Y].$$
From this definition, the difference $\nabla - \nabla^{\rm rec}= {\rm Tor}_\nabla$ is the torsion tensor. Let us now consider the following definition, taken from \cite{bc2017}.

\begin{definition}
A Lie connection in $M$ is a flat connection $\nabla$ in $TM$ such that the Lie bracket of two horizontal vector fields is also horizontal.    
\end{definition}

In other words, a Lie connection is a linear differential equation that locally defines a parallelism. We may say that a Lie connection is modeled over the Lie algebra $\mathfrak g$ if the space of horizontal vector fields defined around any regular point is a Lie algebra isomorphic to $\mathfrak g$. Lie connections can be characterized in the following way, as is done in \cite{bc2017}, Proposition 3.10.

\begin{proposition}
Let $\nabla$ be a Lie connection in $TM$, the following statements are equivalent:
\begin{enumerate}
    \item $\nabla$ is a Lie connection.
    \item $\nabla^{\rm rec}$ is a Lie connection.
    \item $\nabla$ is flat and has constant torsion in the sense $\nabla {\rm Tor}_\nabla = 0$.
    \item $\nabla$ and $\nabla^{\rm rec}$ are flat.
\end{enumerate}
\end{proposition}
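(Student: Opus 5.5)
The plan is to organize everything around two elementary identities and one structural fact about local horizontal frames. Throughout we work on the Zariski open set where $\nabla$ is regular, and locally in the analytic topology.

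\textbf{Step 1: two identities.} First I will check by direct substitution that $(\nabla^{\rm rec})^{\rm rec}=\nabla$:
$$(\nabla^{\rm rec})^{\rm rec}_XY=\nabla^{\rm rec}_YX+[X,Y]=\nabla_XY+[Y,X]+[X,Y]=\nabla_XY.$$
This makes the roles of $\nabla$ and $\nabla^{\rm rec}$ symmetric, so (2)$\Rightarrow$(1) will follow from (1)$\Rightarrow$(2) applied to $\nabla^{\rm rec}$.

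The second identity describes $\nabla^{\rm rec}$ through a horizontal frame of $\nabla$. Suppose $\nabla$ is flat, and let $X_1,\ldots,X_n$ be a local frame of $\nabla$-horizontal vector fields. For an arbitrary field $Y$, and using $C^\infty$-linearity of $\nabla^{\rm rec}$ in the lower slot, we get
$$\nabla^{\rm rec}_{X_i}Y=\nabla_YX_i+[X_i,Y]=[X_i,Y],
\qquad\text{hence}\qquad
\nabla^{\rm rec}_{\sum f_iX_i}Y=\sum f_i[X_i,Y].$$
Therefore a field $Y$ is $\nabla^{\rm rec}$-horizontal if and only if it commutes with every $\nabla$-horizontal field. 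By the symmetry above, if $\nabla^{\rm rec}$ is flat, then $\nabla$-horizontal fields are exactly the fields commuting with a $\nabla^{\rm rec}$-horizontal frame.

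\textbf{Step 2: (1)$\Rightarrow$(2),(4).} If $\nabla$ is a Lie connection, the $\nabla$-horizontal fields form, locally, an $n$-dimensional Lie algebra $\mathfrak g$ that spans $TM$ at each point. So they define a local $\mathfrak g$-parallelism, which is locally conjugate to $(G,\omega_G)$ near a point. Its centralizer is the algebra of right-invariant fields: it is $n$-dimensional, closed under bracket, and spans $TM$ pointwise. By Step 1 this centralizer is exactly the space of $\nabla^{\rm rec}$-horizontal fields. Hence $\nabla^{\rm rec}$ admits a local horizontal frame, so it is flat, and its horizontal fields are closed under bracket, so it is a Lie connection. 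This proves (2) and (4).

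\textbf{Step 3: (4)$\Rightarrow$(1).} Assume $\nabla$ and $\nabla^{\rm rec}$ are both flat, with local horizontal frames $X_i$ and $Y_j$ respectively. Step 1 gives $[X_i,Y_j]=0$ for all $i,j$. The Jacobi identity then gives $[[X_i,X_k],Y_j]=0$. By the symmetric form of Step 1, a field commuting with all $Y_j$ is $\nabla$-horizontal, so $[X_i,X_k]$ is $\nabla$-horizontal. Thus $\nabla$ is a Lie connection.

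\textbf{Step 4: (1)$\Leftrightarrow$(3).} For $\nabla$-horizontal $X,Y$ we have ${\rm Tor}_\nabla(X,Y)=-[X,Y]$. With $\nabla X=\nabla Y=0$ this gives
$$(\nabla_Z{\rm Tor}_\nabla)(X,Y)=\nabla_Z({\rm Tor}_\nabla(X,Y))=-\nabla_Z[X,Y].$$
Since flatness provides a horizontal frame, and a tensor vanishes iff it vanishes on such a frame, $\nabla{\rm Tor}_\nabla=0$ is equivalent to all brackets of horizontal fields being horizontal.

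The only step I expect to need care is the identification, in Step 2, of the centralizer of a local parallelism as an $n$-dimensional, bracket-closed family spanning $TM$. I would handle it by using local conjugation with a Lie group $G$ (Lie's third theorem) and the standard fact that left- and right-invariant fields commute. All other steps are formal consequences of the two identities in Step 1.
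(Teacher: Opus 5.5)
Your proof is correct and complete, with $\nabla$ read as an arbitrary connection, which is clearly what the statement intends (its opening hypothesis ``Let $\nabla$ be a Lie connection'' is a typo). The paper gives no argument of its own here; it imports the result from \cite{bc2017}, Proposition 3.10, so there is no in-paper proof to match.

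Your route is conceptual. You identify the $\nabla^{\rm rec}$-horizontal fields with the centralizer of the $\nabla$-horizontal ones, and get (4)$\Rightarrow$(1) from the Jacobi identity, mirroring left- and right-invariant fields on a group. The cost is an external input in (1)$\Rightarrow$(2),(4): Lie's third theorem plus Cartan's local conjugation of a Maurer--Cartan coframe to $(G,\omega_G)$. Only the inclusion of the right-invariant fields in the centralizer is actually used there, since bracket-closure of the centralizer already follows from Jacobi.

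You can avoid this input by pushing your Step 1 identity one step further. Take a $\nabla$-horizontal frame $X_1,\ldots,X_n$ and write $[X_i,X_j]=\sum_k c_{ij}^kX_k$ with structure functions $c_{ij}^k$. Then
\[
R_{\nabla^{\rm rec}}(X_i,X_j)Z=[[X_i,X_j],Z]-\sum_k c_{ij}^k[X_k,Z]=-\sum_k Z(c_{ij}^k)\,X_k=(\nabla_Z{\rm Tor}_\nabla)(X_i,X_j).
\]
Hence for flat $\nabla$ one has $R_{\nabla^{\rm rec}}(X,Y)Z=(\nabla_Z{\rm Tor}_\nabla)(X,Y)$. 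Flatness of $\nabla^{\rm rec}$, constancy of the torsion, and constancy of the $c_{ij}^k$ are therefore one and the same condition. This gives (1)$\Leftrightarrow$(3)$\Leftrightarrow$(4) at once, and (2) then follows from your involution $(\nabla^{\rm rec})^{\rm rec}=\nabla$. It needs nothing beyond a local horizontal frame, and it shows explicitly that the integrability condition for the commuting frame you obtain from Lie theory is $\nabla{\rm Tor}_\nabla=0$.
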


There is an interesting relationship between a pair of reciprocal Lie connections $\nabla$ and $\nabla^{\rm rec}$. They are modeled over the same Lie algebra. Horizontal vector fields for $\nabla$ commute with the horizontal vector fields of $\nabla^{\rm rec}$ and vice-versa. We are in an infinitesimal situation that is analogous to that of the Lie algebras of left and right invariant vector fields in an algebraic group. \\

Given a $\mathfrak g$-parallelism form $\omega$, we can always find the unique Lie connection $\nabla$ whose horizontal vector fields are those that are parallel with respect to $\omega$. Given a basis $\{A_1,\ldots,A_n\}$ we can fix a frame $(X_1,\ldots,X_r)$ in $M$ defined by $\omega(X_i)=A_i$, and then we define $\nabla$ such that $\nabla_{X_i}X_j = 0$ for all $1 \leq i,j \leq n$. \\

Then $\nabla^{\rm rec}$ is also a Lie connection modeled over the same Lie algebra $\mathfrak g$ and its horizontal vector fields are the infinitesimal symmetries of $\omega$, in the sense that $\nabla^{\rm rec} Y = 0$ if and only if $\mathcal{L}_Y\omega = 0$.

\begin{theorem}\label{thm:0}\cite[Corollary 4.3, (a) and (d)]{bc2017}
Let $\mathfrak g$ be a centerless Lie algebra. An algebraic variety $(M,\omega)$ with a rational parallelism of type $\mathfrak g$ is isogenous to an algebraic group if and only if $\mathfrak{gal}(\nabla^{\rm rec}) = \{0\}$. Moreover, if ${\rm Gal}(\nabla^{\rm rec}) = \{ \rm id \}$ then the isogeny is given by a dominant rational map $\pi\colon M \dasharrow G$ such that $\pi^*(\omega_G) = \omega$.
\end{theorem}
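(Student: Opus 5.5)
The plan is to prove the two implications separately. For the direction $\mathfrak{gal}(\nabla^{\rm rec})=0\Rightarrow$ isogeny, I first treat the sharper case ${\rm Gal}(\nabla^{\rm rec})=\{\rm id\}$ and construct the map $\pi$ explicitly. Then I reduce the finite case to this one by a finite covering.

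Assume first that ${\rm Gal}(\nabla^{\rm rec})$ is trivial. Then the Galois bundle $Z_p\to M'$ is birational to $M'$. So the horizontal sections of $\nabla^{\rm rec}$ are rational vector fields on $M$. These horizontal sections are exactly the infinitesimal symmetries $Y$ of $\omega$, i.e. $\mathcal L_Y\omega=0$. We therefore obtain an $r$-dimensional Lie algebra $\mathfrak h$ of rational vector fields with these properties:
\begin{itemize}
\item $\mathfrak h$ is isomorphic to $\mathfrak g$ (with the opposite bracket);
\item its fields commute with the $\omega$-parallel fields $X_A=\omega^{-1}(A)$;
\item it spans $T_xM$ at generic $x$.
\end{itemize}
Fix a linear isomorphism $\eta\colon\mathfrak h\to\mathfrak g$. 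Since every $Y\in\mathfrak h$ is a symmetry, $\omega(Y)$ is a rational $\mathfrak g$-valued function. Define the rational map
$$\phi\colon M\dasharrow {\rm GL}(\mathfrak g),\qquad \phi(x)(\eta(Y))=\omega_x(Y_x).$$
The key computation is to differentiate $\phi$ along a parallel field $X_A$. Because $[X_A,Y]=0$, we have $X_A(\omega(Y))=-[A,\omega(Y)]$ up to sign conventions, from the Maurer–Cartan equation. This gives
$$d\phi\,\phi^{-1}=-{\rm ad}\circ\omega .$$
Taking $\phi(x_0)={\rm id}$ after left translation, it follows that $\phi$ is an integral manifold of the Maurer–Cartan system of the adjoint group. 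Hence $\phi$ takes values in a coset of $G:={\rm Ad}(\mathfrak g)\subset{\rm Aut}(\mathfrak g)$, whose Lie algebra is ${\rm ad}(\mathfrak g)$. After translating, $\phi$ satisfies $\phi^*(\omega_G)={\rm ad}\circ\omega$ for the reciprocal structure form (or $\omega_G$, after composing with inversion).

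Here is where centerlessness enters: ${\rm ad}\colon\mathfrak g\to{\rm ad}(\mathfrak g)$ is an isomorphism. So $\phi$ has everywhere injective differential. Since $\dim M=\dim\mathfrak g=\dim G$, the map $\phi$ is dominant onto $G$, and $\pi:={\rm ad}^{-1}$-identified $\phi$ satisfies $\pi^*(\omega_G)=\omega$. This is the "moreover" statement. I expect the sign and convention bookkeeping in $d\phi\,\phi^{-1}=-{\rm ad}\circ\omega$ to be the main technical obstacle. This includes checking that the resulting map lands in the adjoint group rather than merely in ${\rm GL}(\mathfrak g)$. The place where the hypothesis is genuinely needed is that the center would sit in the kernel of ${\rm ad}$.

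Next assume only that $\mathfrak{gal}(\nabla^{\rm rec})=0$, i.e. the Galois group is finite. Then $Z_p\to M'$ is a finite étale covering by an irreducible variety $U$. On $U$ the pulled-back parallelism $\theta$ has a reciprocal connection with trivial Galois group, since its horizontal sections are tautologically rational on $Z_p$. Applying the previous case yields a dominant map $U\dasharrow G$ with $\pi^*\omega_G=\theta$. Together with $U\to M$, this gives the required isogeny.

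Conversely, suppose $(M,\omega)$ is isogenous to $(G,\omega_G)$ through some $(U,\theta)$. The right-invariant vector fields on $G$ are regular infinitesimal symmetries of $\omega_G$, and they pull back to rational symmetries of $\theta$ on $U$. Since $U\to M$ is generically finite, the horizontal sections of $\nabla^{\rm rec}$ on $M$ are algebraic over $\mathbb C(M)$. Therefore the Zariski closure of a leaf in the frame bundle has finite fibers, which means ${\rm Gal}(\nabla^{\rm rec})$ is finite and $\mathfrak{gal}(\nabla^{\rm rec})=0$.
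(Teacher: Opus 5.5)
The paper does not prove this statement; it quotes it from \cite{bc2017}, so your argument has to stand on its own. Most of it does. With the convention $d\omega+\frac12[\omega,\omega]=0$ one gets exactly $X_A(\omega(Y))=-[A,\omega(Y)]$, hence $d\phi\,\phi^{-1}=-\mathrm{ad}\circ\omega$. The reduction of the finite case to the trivial one via the Galois cover $Z_p\to M'$ is fine, and so is the converse via lifting right-invariant fields through generically \'etale maps. Also, landing in the adjoint group, which you flag as an obstacle, is automatic. $M'$ is connected and $\phi$ is tangent to the right-invariant distribution $g\mapsto \mathrm{ad}(\mathfrak g)\,g$. So after normalizing $\phi(x_0)=\mathrm{id}$ (by a \emph{right} translation), the image lies in the analytic subgroup $\mathrm{Int}(\mathfrak g)\subset\mathrm{GL}(\mathfrak g)$ generated by $\exp\mathrm{ad}(\mathfrak g)$.

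The genuine gap is that you then treat $G=\mathrm{Int}(\mathfrak g)$ as an \emph{algebraic} group, which is false in general for centerless $\mathfrak g$. Take $\mathfrak g=\mathbb C^2\rtimes\mathbb C$, with basis $(e_1,e_2,f)$ and $f$ acting by $\mathrm{diag}(1,\alpha)$, $\alpha\notin\mathbb Q$. Then $\mathfrak g$ is centerless, but $\exp(t\,\mathrm{ad}f)=\mathrm{diag}(e^t,e^{\alpha t},1)$ has Zariski closure the torus $\{\mathrm{diag}(a,b,1)\}$, which is not contained in $\mathrm{Int}(\mathfrak g)$. You cannot fall back on $\mathrm{Aut}(\mathfrak g)^0$ either: here $\mathrm{diag}(1,0,0)$ is an outer derivation, so $\phi$ would not be dominant. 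Without algebraicity, \emph{dominant onto $G$} and \emph{isogenous to an algebraic group} have no content, and the same gap propagates to your finite-Galois case.

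What is missing is that the \emph{existence of the rational map $\phi$} forces $\mathrm{Int}(\mathfrak g)$ to be Zariski closed. This is short. Let $Z$ be the Zariski closure of $\phi(M')$; it is irreducible of dimension $n$ since $\phi$ is an immersion. Put $V_B(g)=\mathrm{ad}(B)\,g$. For $f$ in the ideal of $Z$, $(V_Bf)\circ\phi=X_{-B}(f\circ\phi)=0$, where $X_{-B}$ is the $\omega$-parallel field with value $-B$. So $V_B$ preserves that ideal, and its flow $g\mapsto \exp(t\,\mathrm{ad}B)\,g$ preserves $Z$. The set $\{g\colon gZ=Z\}$ is a Zariski closed subgroup containing $\mathrm{Int}(\mathfrak g)$, hence containing $\overline{\mathrm{Int}(\mathfrak g)}$. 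Since $\mathrm{id}\in Z$, this gives $\overline{\mathrm{Int}(\mathfrak g)}\subseteq Z$, so $\dim\overline{\mathrm{Int}(\mathfrak g)}\le n=\dim\mathrm{Int}(\mathfrak g)$. Hence $\mathrm{Int}(\mathfrak g)=\overline{\mathrm{Int}(\mathfrak g)}=Z$ is an algebraic group with Lie algebra $\mathrm{ad}(\mathfrak g)\simeq\mathfrak g$, and $\phi$ is dominant onto it. With this step inserted, and composing with inversion since $\phi^*\omega_G^{\rm rec}=\mathrm{ad}\circ\omega$, your proof goes through.
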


\section{Isogeny and integrability of $\mathcal D$-groupoid equations}

Here we want to know how far is a finite dimensional $\mathcal D$-groupoid from being \emph{algebraically integrable}, in the sense of getting algebraic formulae for biholomorphisms of $M$ that are sections of $\mathcal G$. For this we extend the notion of isogeny of parallelisms to a general notion of isogeny of $\mathcal D$-groupoids. If a $\mathcal D$-groupoid is isogenous to an algebraic group action, then for each element $j_x\sigma\in \mathcal G$ there is a unique algebraic map $\sigma$ whose jet is $j_x\sigma$. \\

On the other hand, we have $\mathcal D$-groupoids that are far to be algebraically integrable. A common characteristic of any algebraic group $G$ is that, the diagonal is always a subgroup of $G\times G$. However, for  $\mathcal D$-groupoids, the diagonal is not always a $\mathcal D$-groupoid. We will see  that far from integrable $\mathcal D$-groupoids have not non-trivial sub $\mathcal D$-groupoids in their cartesian powers. 

\begin{definition}
Let $\pi\colon \widetilde{M} \to M$ be a dominant map between irreducible 
varieties of the same dimension $n$, and $\mathcal G\subset {\rm Aut}(M)$ a
$\mathcal D$-groupoid in $M$. 
The pullback $\pi^*\mathcal G = \widetilde{\mathcal G} \subset{\rm Aut}(\widetilde{M})$ 
is the $\mathcal D$-groupoid in $\widetilde M$ whose field of invariants is ${\rm Inv}_{\mathcal D}(\widetilde{\mathcal G}) = \pi^*({\rm Inv}_{\mathcal D}(\mathcal G))$.
\end{definition}

After restriction to suitable open subset of $\widetilde M$ and $M$ we may assume that $\pi$ is a covering. Then, it also induces coverings $\pi_*\colon R\widetilde M \to RM$ here $\pi_*(r_{\widetilde x})= \pi \circ r_{\widetilde x}$ and $\pi_\star\colon {\rm Aut}(\widetilde M) \to {\rm Aut}(M)$ defined by
$\pi_{\star}(r_{\widetilde y} \circ r_{\widetilde x}^{-1}) = 
\pi_{\star}(r_{\widetilde y} \circ r_{\widetilde x}^{-1})  = (\pi\circ  r_{\widetilde y}) \circ (\pi \circ r_{\widetilde x}^{-1}).$ Here we use a decomposition of a jet of biholomorphism as the product of two frames, equivalently we can set $\pi_\star(j_{\widetilde x}\sigma) = j_x (\pi\circ \sigma \circ \widetilde\pi)$
where $\widetilde\pi$ is the local inverse of $\pi$ that makes $\widetilde\pi(\pi(\widetilde x)) = \widetilde x$. In such case, the pullback of $\mathcal G$ is just the preimage $\widetilde{\mathcal G} = \pi_\star^{-1}(\mathcal G)$ and in the transitive case, a bundle of adapted frames for $\mathcal G$ is given by $\pi_{*}^{-1}(R_{\mathcal G} M)$ where $R_{\mathcal G} M$ is a bundle of adapted frames for $\mathcal G$.

\begin{definition}\label{def_isogeny}
Let $M$ and $N$ be complex irreducible smooth varieties of dimension $n$. We say that two $\mathcal D$-groupoids $\mathcal G \subset {\rm Aut}(M)$ and $\mathcal H \subset{\rm Aut}(N)$ are isogenous if there is a complex irreducible smooth variety $\widetilde M$ and finite dominant maps $\pi_1\colon \widetilde M\to M$, $\pi_2\colon \widetilde M \to N$ such that $\pi_1^\ast(\mathcal G) = \pi_2^*(\mathcal H)$.
\end{definition}

\begin{example}
Definition \ref{def_isogeny} extends the definition of isogeny of parallelisms indroduced in \cite[Definition 2.7]{bc2017}. If $(M,\omega)$ is a variety with parallelism, then, we can define its $\mathcal D$-groupoid of symmetries:
$${\rm Sym}(M,\omega) = {\rm cl}\{j_x\sigma \in {\rm Aut}(M)\colon \sigma^*(\omega) = \omega \}.$$
If $(N,\theta)$ is also a parallelized variety, then it happens that $(M,\omega)$ and $(N,\theta)$ are isogenous if and only if ${\rm Sym}(M,\omega)$ and ${\rm Sym}(N,\theta)$ are isogenous $\mathcal D$-groupoids. However, it is not true that is a groupoid $\mathcal H$ in $N$ is isogenous to ${\rm Sym}(\omega)$ then, it is the $\mathcal D$-groupoid of symmetries of a parallelism. It is known $\mathcal D$-groupoid $\mathcal H$ in $N$ of dimension $n+0$ corresponds to the symmetries of a parallelism $\widetilde\theta$ of some covering of an open subset of $N$, $\pi\colon \widetilde N \to N$. Therefore, if $\mathcal H\subset{\rm Aut}(N)$ is a dimension $n+0$ $\mathcal D$-groupoid isogenous to ${\rm Sym}(M,\omega)$ then $\mathcal H$ is the groupoid of symmetries of a parallelism on some covering of an open subset of $N$, isogenous to $(M,\omega)$. 
\end{example}

\subsection{The integrable example: action groupoids}\label{ss_actiongroupoids}

Let us consider an algebraic group $G$ with Lie algebra $\mathfrak g$ of left invariant vector fields acting faithfully and transitively in the homogeneous space $M = G/H$. We can consider the action groupoid,
$$G\ltimes M \to M\times M, \quad (g, x) \mapsto (x, g\cdot x).$$
There is a natural embedding that represents $G\ltimes M$ in ${\rm Aut}(M)$,
$$j\colon G\ltimes M \hookrightarrow {\rm Aut}(M), \quad (g, x) \mapsto j_x L_g$$
where $L_g\colon M\to M$ is the left translation $L_g(x) = g\cdot x$. Then $\mathcal G$ is a $\mathcal D$-groupoid of finite dimension $n + r$ where $r = {\dim}(G)$. \\

Let us fix $x\in M$ and let us consider $\mathcal G_{x\bullet} = s^{-1}(x)$ the set of elements of $\mathcal G$ with source $x$. Then we have:

$$\mathcal G_{x\bullet} = j(G\times \{x\} ) \simeq G \times \{x\}$$

On $G$ we consider the structure form $\omega_G$ defined by $(\omega_G(X_g))_h = dL_{hg^{-1}}(X_g)$, which is a parallelism form whose parallel fields are the left invariant vector fields. But it is also endowed with the reciprocal structure form $\omega_G^{\rm rec}$ defined by $(\omega_G^{\rm rec}(X_g))_h = - dL_hdR_g^{-1}(X_g)$ whose parallel vector fields are that of $\mathfrak g^{\rm rec}$, the Lie algebra of right invariant vector fields in $G$. We have the following:

\begin{enumerate}
\item[(a)] $t\colon \mathcal G_{x\bullet} \to M$ is a principal bundle with structure group $\mathcal G_{xx}$.
\item[(b)] $\mathcal G_{x\bullet}$ is endowed with commuting parallelisms $j_*(\omega_{G})$ and $j_*(\omega_G^{\rm rec})$.
\item[(c)] The Lie algebra of vector fields $j_*(\mathfrak g^{\rm rec})$ is projectable by $t$.
\item[(d)] The projection $t_*(j_*(\mathfrak g^{\rm rec}))$ is the space of solutions of $\mathcal D$-Lie algebra of $\mathcal G$. Thus, the differential Galois group of ${\rm Lie}(\mathcal G)$ is $\{\rm id\}$.
\end{enumerate}

The action of $G$ in $M$ naturally extends to an action of $G$ on $RM$ by $g\cdot r_x = L_g \circ r_x$. If we fix a frame $r_x$, then the orbit $G\cdot r_x = R_{\mathcal G}M$ is a bundle of adapted frames for $\mathcal G$. It is a principal homogeneous space for $G$ and a principal bundle for the stabilizer  subgroup of $x$, $\mathcal G_{xx}$. The isomorphism $i \colon G \to \mathcal R_{\mathcal G}M$, $g\mapsto L_g\circ r_x$ allow us to give $\mathcal R_{\mathcal G}M$ a pair of commuting parallelism $i_*(\omega_G)$ and $i^*(\omega_G^{\rm rec})$. We have a commutative diagram of $G$-equivariant isomorphisms,
\[
\xymatrix{
G \ar[r] \ar[dr] &  R_{\mathcal G}M = G\cdot r_x \ar[d] \\
& \mathcal G_{x\bullet}&
}
\xymatrix{
g \ar[r] \ar[dr] &  L_g\circ r_x = r_xy \ar[d] \\
& j_xLg = r_y\circ r_x^{-1}&
}
\]
The Lie algebra of vectors that are parallel for  $i^*(\omega_G^{\rm rec})$, is the Lie algebra of the infinitesimal generators of the action of $G$ on $R_{\mathcal G}M$. It follows that it is the Lie algebra restrictions to $R_\mathcal GM$ of prolongations $X^{(\infty)}$ to $RM$ of vector fields $X$ that are solutions of ${\rm Lie}(\mathcal G)$. \\

Summarizing, \emph{if $\mathcal G\subset {\rm Aut}(M)$ is a $\mathcal D$-groupoid isomorphic to a transitive action groupoid $G\ltimes M$, then, the adapted frame bundles $R_\mathcal GM$ and the fibers of the source map $\mathcal G_{x\bullet}$ are parallelized varieties isomorphic to $(G,\omega_G)$.}

\subsection{Canonical parallelism }

The main objective of this subsection is to express how some of the local features of the action groupoid example in subsection \ref{ss_actiongroupoids} are also present in the general case of transitive finite dimensional $\mathcal D$-groupoid. Let $\mathcal G\subset{\rm Aut}(M)$ be a finite dimensional $\mathcal D$-groupoid. We take $k$ big enough such that $\mathcal G$ is determined by its $k$-th order differential invariant and $\mathcal G \simeq \mathcal G_k$. We also assume that $\mathcal G_k\subset{\rm Aut}(M)$ is a smooth Lie subgroupoid. 

\smallskip

Let us fix a point $x$ that we will use as the origin. We also consider the bundle $t\colon \mathcal G_{x\bullet}\to M$ of elements of $\mathcal G$ with source $x$. Note that we have a canonical isomorphism
$$\iota\colon {\rm Lie}(\mathcal G)_x \xrightarrow{\sim} T_{e_x}(\mathcal G_{x\bullet}), \quad j_xX\mapsto \widetilde{X}^{(\infty)}(e_x).$$
Under this identification, a jet $j_xX\in {\rm Lie}(\mathcal G)_x$ is identified with the value of its prolongation at the identity $\widetilde{X}^{(\infty)}(e_x)$.

\smallskip

Let us assume that the domain of $X$ is an open neighborhood\footnote{In the transcendental topology.} $U_x$ of $x$, so that $X$ is an analytic vector field on $U$. The domain of $\widetilde{X}^{(\infty)}$ as an analytic vector field in $\mathcal G$ is $t^{-1}(U_x)$. Only if the target of $\sigma\in\mathcal G$ is in $U_x$ can it be composed with the flow of $X$.

Then, if $\{j_xX_1,\ldots,j_xX_r\}$ form a basis of ${\rm Lie}(\mathcal G)_x$, we can find a common domain $U_x$ for them, so that, $\langle \widetilde X_1^{(\infty)}, \ldots, \widetilde X_r^{(\infty)} \rangle$ is a realization of ${\rm Lie}(\mathcal G)_x$ as a Lie algebra of vector fields in $t^{-1}(U_x)$. Those vector fields are in the kernel of $ds$, so we may restrict them to $\mathcal G_{x \bullet} \cap t^{-1}(U_x)$, obtaining a local analytic ${\rm Lie}(\mathcal G)_x$-parallelism of $\mathcal G_{x\bullet} \cap t^{-1}(U_x)$.
$$\rho \colon {\rm Lie}(\mathcal G)_x \to \mathfrak X(\mathcal G_{x\bullet} \cap t^{-1}(U_x)),
\quad j_xX \mapsto \widetilde X^{(\infty)}\vert_{\mathcal G_{x\bullet} \cap t^{-1}(U_x)}.$$

On the other hand, note that if $j_x\sigma \in \mathcal G_{x\bullet}$, then $\sigma$ is a biholomophism defined in some neighborhood of $x$ that we may also refer to as $U_x$. The main point here is that for all $z\in U_x$ we have $j_z\sigma \in \mathcal G$. This implies that the left translation by $\sigma$ is not only defined in $\mathcal G_{\bullet x}$ but it extends to an open subset $t^{-1}(\mathcal U_x)$:
$$L_\sigma \colon t^{-1}(U_x) \to \mathcal G, \quad j_z\tau \mapsto j_z(\sigma \circ \tau).$$
Next, also consider its linearization at the identity element,
$$d_{e_x}L_{\sigma} \colon T_{e_x} \mathcal G \to T_{j_x\sigma} \mathcal G.$$
As $L_\sigma$ maps $\mathcal G_{x\bullet}$ onto itself, this $d_{e_x}L_{\sigma}$ also maps $T_{e_x}(\mathcal G_{x\bullet})$ onto $T_{j_x\sigma}(\mathcal G_{x\bullet})$. Thus, any vector $\vec v_{e_x} \in  T_{e_x}(\mathcal G_{x\bullet})$ can be extended to a \emph{global} left invariant vector field $\vec A$ in $\mathcal G_{x\bullet}$ defined by $\vec A(j_x\sigma) = dL_{\sigma}(\vec v)$.

\begin{lemma}
Let $\vec A$ be a left invariant vector field in $\mathcal G_{x\bullet}$, satisfying $\vec A(\sigma) = dL_{\sigma}(\vec A(e_{s(\sigma)}))$, and $X$ be vector field in $M$ defined is some open subset $U$ (in the transcendental topology) such that for all $z$ in its domain of definition $j_zX \in {\rm Lie}(\mathcal G)$. Then,
$$[\vec A, \widetilde X^{(\infty)}|_{\mathcal G_{x\bullet}}] = 0$$
on their common domain of definition $\mathcal G_{x\bullet}\cap t^{-1}(U).$
\end{lemma}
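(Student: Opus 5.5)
The plan is to argue through flows: two vector fields commute exactly when their local flows commute. The flow of $\widetilde X^{(\infty)}$ acts by post-composition with the flow of $X$. The flow of a left invariant field $\vec A$ on $\mathcal G_{x\bullet}$ should act by pre-composition (right translation) by a one-parameter family in the isotropy direction. Since post- and pre-composition commute by associativity in the groupoid, the two fields commute.

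\textbf{Step 1: the flow of $\widetilde X^{(\infty)}$.} Let $\phi_t=\exp(tX)$ be the local flow of $X$ on $U$. By the dynamical interpretation recalled above, the flow of $\widetilde X^{(\infty)}$ on $t^{-1}(U)$ is
$$\Phi_t(j_x\sigma)=j_x(\phi_t\circ\sigma).$$
Because $j_zX\in{\rm Lie}(\mathcal G)$ for all $z\in U$, $\phi_t$ is an infinitesimal generator of $\mathcal G$. Hence $j_y\phi_t\in\mathcal G$, and $\Phi_t$ preserves $\mathcal G_{x\bullet}\cap t^{-1}(U)$. So the restriction in the statement makes sense, and it is the flow of $\widetilde X^{(\infty)}|_{\mathcal G_{x\bullet}}$.

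\textbf{Step 2: $\Phi_t$ commutes with left translations.} For $j_x\sigma\in\mathcal G_{x\bullet}$, the definition $L_\sigma(j_z\tau)=j_z(\sigma\circ\tau)$ is composition on the right by the element $j_x\sigma$, whereas $\Phi_t$ composes on the left by $\phi_t$. Hence, wherever both sides are defined,
$$\Phi_t\circ L_\sigma(j_z\tau)=j_z(\phi_t\circ\sigma\circ\tau)=L_{\phi_t\circ\sigma}(j_z\tau).$$
The point to check is that a left invariant $\vec A$ is determined by $\vec v=\vec A(e_x)\in T_{e_x}(\mathcal G_{x\bullet})$ through $\vec A(j_x\sigma)=d_{e_x}L_\sigma(\vec v)$. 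Choose a curve $c(s)=j_x\gamma_s$ in $\mathcal G_{x\bullet}$ with $c(0)=e_x$ and $c'(0)=\vec v$. Then $\vec A(j_x\sigma)=\frac{d}{ds}\big|_{0}\, j_x(\sigma\circ\gamma_s)$. Applying $d\Phi_t$ gives
$$d\Phi_t(\vec A(j_x\sigma))=\frac{d}{ds}\Big|_{0}\, j_x(\phi_t\circ\sigma\circ\gamma_s)=\vec A(\Phi_t(j_x\sigma)).$$
The last equality is the definition of $\vec A$ at the point $j_x(\phi_t\circ\sigma)$. Thus $\Phi_t^*\vec A=\vec A$ on the common domain.

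\textbf{Step 3: conclusion.} Differentiating $\Phi_t^*\vec A=\vec A$ at $t=0$ gives $\mathcal L_{\widetilde X^{(\infty)}}\vec A=0$, that is, $[\vec A,\widetilde X^{(\infty)}|_{\mathcal G_{x\bullet}}]=0$ on $\mathcal G_{x\bullet}\cap t^{-1}(U)$.

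The main obstacle is domain bookkeeping. $\sigma$ and $\phi_t$ are only germs defined on transcendental neighborhoods, and $\gamma_s$ need not be a global transformation. Composition is meaningful only at the level of jets at $x$: $j_x(\phi_t\circ\sigma\circ\gamma_s)$ needs only $\gamma_s(x)=x$, $\sigma$ defined near $x$, and $\phi_t$ defined near $\sigma(x)$ for small $t$. These conditions hold because $\sigma(x)\in U$. Smoothness of $\mathcal G\simeq\mathcal G_k$ ensures that jets of compositions depend algebraically, hence smoothly, on the factors, which justifies differentiating in $s$ and $t$. This argument uses the identification of jets in $\mathcal G$ with actual germs, which the paper stated above.
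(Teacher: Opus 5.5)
Your argument is correct and is essentially the paper's proof: the flow of $\widetilde X^{(\infty)}$ is the extended left translation $j_x\sigma\mapsto j_x(\exp(tX)\circ\sigma)$, a left invariant $\vec A$ is preserved by it, and so the bracket vanishes. Your identity $\Phi_t\circ L_\sigma=L_{\phi_t\circ\sigma}$ is what justifies the paper's displayed equality $dL_{\exp(tX)}(\vec A_\sigma)=\vec A_{\exp(tX)\circ\sigma}$. One small slip, harmless for Steps 1--3: the curve $j_x\gamma_s$ lies in $\mathcal G_{x\bullet}$ and in general leaves the isotropy $\mathcal G_{xx}$ (as does the flow of $\vec A$, contrary to your opening heuristic), so what you need is $\gamma_s(x)\in{\rm dom}(\sigma)$ for small $s$, not $\gamma_s(x)=x$.
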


\begin{proof}
The flow of $\widetilde X^{(\infty)}$ in some neighborhood of $e_x$ is given by a left translation
$$\exp(t\widetilde X^{\infty})(\sigma) = \exp(t X)\circ \sigma.$$
Then, we have:
$$d L_{\exp(t\widetilde X^{(\infty)})}(\vec A_{\sigma}) = \vec A_{\exp(t\vec X^{(\infty)})\circ \sigma}.$$
The vector field $\vec A$ is invariant by the flow of $\vec X^{(\infty)}$, so we conclude that they commute. 
\end{proof}

\smallskip

\begin{corollary}
The form $\theta \colon T\mathcal G_{x\bullet} \to {\rm Lie}(\mathcal G)_{x}^{\rm rec}$ defined by
$\theta(\vec v_{\sigma}) = \iota^{-1}(dL_{\sigma}^{-1}(\vec v_\sigma))$ is a regular parallelism form in $\mathcal G_{x\bullet}$ of type ${\rm Lie}(\mathcal G)_{x}^{\rm rec}$.
\end{corollary}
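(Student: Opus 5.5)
To show that $\theta$ is a regular parallelism form of type ${\rm Lie}(\mathcal G)_x^{\rm rec}$, I will check the two defining conditions for parallelism forms recalled in the Parallelisms subsection. The first is that each $\theta_\sigma$ is a linear isomorphism. The second is that $\theta$ sends brackets of parallel vector fields to brackets in the target Lie algebra.

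\emph{Regularity and pointwise isomorphism.} Since $\mathcal G\simeq\mathcal G_k$ is a smooth Lie subgroupoid, $\mathcal G_{x\bullet}$ is a smooth variety. For $\sigma\in\mathcal G_{x\bullet}$ with $s(\sigma)=x$, the left translation $L_\sigma$ is a local biholomorphism near $e_x$ that maps $\mathcal G_{x\bullet}$ to itself. Hence $d_{e_x}L_\sigma\colon T_{e_x}\mathcal G_{x\bullet}\to T_\sigma\mathcal G_{x\bullet}$ is an isomorphism, and composing its inverse with the isomorphism $\iota^{-1}$ gives that $\theta_\sigma$ is an isomorphism onto ${\rm Lie}(\mathcal G)_x$. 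For regularity, I will observe that $\sigma\mapsto d_{e_x}L_\sigma$ is expressed through the groupoid composition $\mathcal G\times_M\mathcal G\to\mathcal G$. This composition is algebraic: by Fa\`a di Bruno it is given by polynomial expressions in the jet coordinates. Therefore $\theta$ is a regular $1$-form, which is the only analytic-versus-algebraic point to be careful about.

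\emph{Parallel vector fields.} The vector fields parallel for $\theta$ are exactly the left invariant fields $\vec A$ with $\vec A(\sigma)=dL_\sigma(\iota(j_xX))$. So I must show that $\theta([\vec A,\vec B])=-[\theta(\vec A),\theta(\vec B)]$, where the right-hand bracket is computed in ${\rm Lie}(\mathcal G)_x$; this is what the superscript ``rec'' encodes. I will argue locally near $e_x$ using the preceding Lemma and the local parallelism $\rho$. Near $e_x$ the fields $\widetilde X^{(\infty)}$ form a Lie algebra realizing ${\rm Lie}(\mathcal G)_x$, via the dynamical interpretation as post-composition with the flow of $X$, and they span the tangent space. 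By the Lemma, the left invariant fields commute with all $\widetilde X^{(\infty)}$. This is exactly the situation of a pair of reciprocal parallelisms on a group germ, with the $\widetilde X^{(\infty)}$ playing the role of right invariant fields and the $\vec A$ that of left invariant ones.

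\emph{Bracket identity and extension.} From the commutation I deduce that $\vec A$ and $\widetilde X^{(\infty)}$ agree at $e_x$ exactly when $\theta(\vec A)=j_xX$. Since the $\vec A$ form the centralizer of the transitive algebra $\rho({\rm Lie}(\mathcal G)_x)$, they form a Lie algebra anti-isomorphic to it via evaluation at $e_x$. The standard argument uses the Maurer--Cartan equation for $\theta$, or equivalently compares the two flows $\exp(tA)\exp(sB)$, and yields the sign reversal. This establishes the Maurer--Cartan equation $d\theta+\frac12[\theta,\theta]^{\rm rec}=0$ on a neighborhood of $e_x$. Because $\theta$ is regular and $\mathcal G_{x\bullet}$ is irreducible, the identity extends to all of $\mathcal G_{x\bullet}$ by analytic continuation and Zariski density.

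The main obstacle I expect is this bracket step. One has to check carefully that the centralizer argument gives an anti-isomorphism with the correct sign, and to handle the fact that the $\widetilde X^{(\infty)}$ exist only on $t^{-1}(U_x)$, which is why the argument must be carried out near $e_x$ and then propagated by algebraicity.
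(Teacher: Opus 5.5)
Your proposal is correct and follows essentially the paper's argument. Near $e_x$ the prolongations $\widetilde X^{(\infty)}$ give a local ${\rm Lie}(\mathcal G)_x$-parallelism, the preceding Lemma makes the left invariant fields its infinitesimal symmetries, and $\theta$ is the resulting reciprocal form, which then extends globally.

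Your extra checks (pointwise isomorphism, regularity, sign) go beyond the paper's sketch, and two small refinements are worth noting. First, $d_{e_x}L_\sigma$ depends on $j^{k+1}_x\sigma$, not only on composing $k$-jets; it is still regular in $\sigma$ because $\mathcal G_{k+1}\simeq\mathcal G_k$. Second, the bracket identity is best propagated from $e_x$ to a neighborhood of every $\sigma$ using the left translations $L_\sigma$, which preserve the left invariant fields, rather than by irreducibility of $\mathcal G_{x\bullet}$, which is not guaranteed in general.
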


\begin{proof}
For each $j_x X\in {\rm Lie}(\mathcal G)_x$ there is a nighborhood of the identity $e_{x}$ in the transcendental topology in $\mathcal G_{x\bullet}$ in which $\widetilde X^{(\infty)}$ is defined. As ${\rm Lie}(\mathcal G)_x$ is finite dimensional, we can extend the prolongations to $\mathcal G_{x\bullet}$ of a basis of 
${\rm Lie}(\mathcal G)_x$ defined in some neighborhood of $e_x$. Then, we have that the ${\rm Lie}(\mathcal G)_x$-valued $1$-form $\vartheta$ defined by $\vartheta(\widetilde X^{(\infty)}_{j_z\sigma}) = X_x$ is a local parallelism form. Thus, $\theta$ is just the reciprocal form of symmetries of $\vartheta$, that happens to extend globally.  
\end{proof}

Then, we have the following situation:
\begin{itemize}
\item[(a)] $\mathcal G_{x\bullet}$ is a parallelized variety with parallelism form $\omega$.
\item[(b)] Vector fields in $\mathcal G_{x\bullet}$ commuting with the $\omega$-parallel vector fields are the prolongations to $\mathcal G_{x\bullet}$ of vector fields in $M$ that are in the $\mathcal D$-Lie algebra of $\mathcal G$.
\end{itemize}

We can finally transport everything to bundle of adapted frames. Let us fix a frame $r_x\in RM$. We have now an isomorphim,
$$i\colon \mathcal G_{x\bullet} \xrightarrow{\sim} R_{\mathcal G}M, \quad \sigma \mapsto \sigma\circ r_x.$$
Then $\omega = i_*(\theta)$ is a ${\rm Lie}(\mathcal G)_x^{\rm rec}$-parallelism form on $R_{\mathcal G}(M)$.

\begin{proposition}
 Let us consider $\omega$ the ${\rm Lie}(\mathcal G)_x^{\rm rec}$-parallelism form on $R_{\mathcal G}(M)$, and 
$\nabla$ the induced Lie connection whose horizontal sections are vector fields commuting with $\omega$-parallel vector fields. Then, a local vector field $Y$ in $R_\mathcal GM$ is $\nabla$-horizontal if and only if it is of the form $X^{(\infty)}|_{R_\mathcal GM}$ for some local vector field $X$ in $M$ which is a solution of ${\rm Lie}(\mathcal G)$.
\end{proposition}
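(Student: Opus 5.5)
Transport the corresponding statement on $\mathcal G_{x\bullet}$ to $R_{\mathcal G}M$ along the isomorphism $i\colon \sigma\mapsto \sigma\circ r_x$, then run a dimension count. Horizontal sections of the Lie connection attached to the parallelism $\omega$ are, by construction, the vector fields commuting with every $\omega$-parallel field. The $\omega$-parallel fields are $i_*$ of the $\theta$-parallel fields on $\mathcal G_{x\bullet}$, which are the left invariant fields $\vec A$, since $\theta(\vec A)=\iota^{-1}(\vec A(e_x))$ is constant. So we must characterize the local fields on $R_{\mathcal G}M$ commuting with all $i_*\vec A$.

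\textbf{Step 1: compatibility of $i$ with prolongations.} For a local vector field $X$ on $M$ we have $i_*(\widetilde X^{(\infty)}|_{\mathcal G_{x\bullet}})=X^{(\infty)}|_{R_{\mathcal G}M}$. This follows from the dynamical interpretations: $\exp(t\widetilde X^{(\infty)})(j_x\sigma)=j_x(\exp(tX)\circ\sigma)$, and applying $i$ gives $j_0(\exp(tX)\circ\sigma\circ r_x)=\exp(tX^{(\infty)})(\sigma\circ r_x)$. If $X$ solves ${\rm Lie}(\mathcal G)$, its flow lies in $\mathcal G$, so it preserves $R_{\mathcal G}M$. Hence $X^{(\infty)}$ is tangent to $R_{\mathcal G}M$ (equivalently, $X^{(\infty)}$ annihilates ${\rm Inv}_{\mathcal D}(\mathcal G)$, and $R_{\mathcal G}M$ is a level set of $I$), and the restriction makes sense.

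\textbf{Step 2: the ``if'' direction.} Let $X$ be a solution of ${\rm Lie}(\mathcal G)$. By the preceding Lemma, $[\vec A,\widetilde X^{(\infty)}|_{\mathcal G_{x\bullet}}]=0$ for every left invariant $\vec A$. Pushing forward by $i$ and using Step 1, $X^{(\infty)}|_{R_{\mathcal G}M}$ commutes with all $\omega$-parallel fields, so it is $\nabla$-horizontal.

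\textbf{Step 3: the ``only if'' direction, by dimension count.} This is the main obstacle. The space of $\nabla$-horizontal local fields on a connected open set $W\subset R_{\mathcal G}M$ has dimension exactly $r=\dim{\rm Lie}(\mathcal G)_x=\dim R_{\mathcal G}M$. The reason is that $\nabla$ is a flat connection on $TR_{\mathcal G}M$; concretely, the fields commuting with a local parallelism are its reciprocal parallel fields, which form an $r$-dimensional space. On the other side, take a small simply connected $U\subset M$ with $W\subset \pi^{-1}(U)$. The solutions of ${\rm Lie}(\mathcal G)$ on $U$ form an $r$-dimensional space, since ${\rm Lie}(\mathcal G)\to M$ is a rank-$r$ bundle with flat connection $\nabla^{{\rm Lie}(\mathcal G)}$, whose horizontal sections are exactly the jets $jX$ of such solutions.

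The map $X\mapsto X^{(\infty)}|_W$ is linear. It is injective: if $X^{(\infty)}$ vanishes on $W$ then, projecting to $M$ (the projection $R_{\mathcal G}M\to M$ is a submersion, so $\pi(W)$ is open), $X$ vanishes on an open set, and by analytic continuation on the connected domain $X=0$. By Step 2 its image consists of horizontal fields, so by equality of dimensions it is onto the space of horizontal fields on $W$. The delicate point is the domain bookkeeping: one must choose $W$ and $U$ compatibly so that every horizontal field on $W$ is reached. I would handle this by working with germs at a point of $R_{\mathcal G}M$ and then extending by analytic continuation along the flat connection.
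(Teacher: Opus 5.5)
Your proposal is correct and follows the paper's route. The paper's entire proof is the identity $i_*(\widetilde X^{(\infty)}|_{\mathcal G_{x\bullet}}) = X^{(\infty)}|_{R_{\mathcal G}M}$ (your Step 1), used to transport from $\mathcal G_{x\bullet}$ the characterization of the fields commuting with the left invariant ones; your Steps 2 and 3 make that characterization explicit through the Lemma and a dimension count, which the paper leaves implicit in the Corollary's remark that $\theta$ is the reciprocal form of the local parallelism $\vartheta$ whose parallel fields are the prolongations $\widetilde X^{(\infty)}$.
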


\begin{proof}
It is enough to note that the prolongation to the jet space $\vec X\mapsto \widetilde X^{(\infty)}$ commutes with the prolongation to the frame bundle, in the sense that, $i_*(\widetilde  X^{(\infty)}|_{\mathcal G_{x\bullet}}) = X^{(\infty)}|_{R_\mathcal GM}$ for all vector field in $M$. 
\end{proof}

\begin{remark}
The form $\omega$ in $R_{\mathcal G}M$ is unique in the sense that the choice of any origin frame yields a form with values in an isomorphic Lie algebra and the same parallel vector field.
\end{remark}

There is an important and implicit relation between the Galois groups of the connections $\nabla$ in $TR_\mathcal GM\to R_\mathcal GM$ and $\nabla^{{\rm Lie}(\mathcal G)}$ in ${\rm Lie}(\mathcal G)\to M$. 

\begin{proposition}\label{prp_Gal_Gal}
Let us consider $\nabla$ the Lie connection on $TR_\mathcal GM\to R_\mathcal GM$ whose horizontal vector fields are the infinitesimal symmetries of the canonical parallelism in $R_\mathcal GM$, and $\nabla^{{\rm Lie}(\mathcal G)}$ the connection in ${\rm Lie}(\mathcal G)\to M$ whose horizontal sections are the jet prolongations of vector fields in the $\mathcal D$-Lie algebra of $\mathcal G$. Then,
\begin{enumerate}
    \item[(a)] ${\rm Gal}(\nabla) = {\rm id}  \,\, \Longleftrightarrow \,\, {\rm Gal}(\nabla^{{\rm Lie}(\mathcal G)}) = \{\rm id\}$.
    \item[(b)] $\mathfrak{gal}(\nabla) = \{0\}  \,\, \Longleftrightarrow \,\, \mathfrak{gal}(\nabla^{{\rm Lie}(\mathcal G)}) = \{\rm 0\}$.
\end{enumerate}
\end{proposition}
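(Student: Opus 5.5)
We compare the two Galois groups through the fact that both connections have the same local horizontal sections. By the previous proposition, the prolongation map $X\mapsto X^{(\infty)}|_{R_{\mathcal G}M}$ is a linear bijection between local solutions $X$ of ${\rm Lie}(\mathcal G)$ near a point $x$ and local $\nabla$-horizontal vector fields on $R_{\mathcal G}M$ near any point $p$ over $x$. Both spaces have dimension $r=\dim_M\mathcal G=\dim R_{\mathcal G}M$. Indeed, $\nabla$-horizontal fields are determined by their value at $p$, and jets in ${\rm Lie}(\mathcal G)_x$ are determined by finitely many derivatives. Composing, we obtain a linear isomorphism
\[
\Phi_p\colon {\rm Lie}(\mathcal G)_x\to T_p R_{\mathcal G}M,\qquad j_xX\mapsto X^{(\infty)}(p),
\]
which is regular in $p$. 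It therefore defines a regular isomorphism of vector bundles $\Phi\colon \pi^*{\rm Lie}(\mathcal G)\to TR_{\mathcal G}M$ over $R_{\mathcal G}M$, where $\pi\colon R_{\mathcal G}M\to M$ is the projection. The first step is to check that $\Phi$ is an isomorphism of flat connections,
\[
\Phi\colon (\pi^*{\rm Lie}(\mathcal G),\pi^*\nabla^{{\rm Lie}(\mathcal G)})\to (TR_{\mathcal G}M,\nabla).
\]
This holds because $\Phi$ sends the pullbacks of horizontal sections $jX$ to the horizontal fields $X^{(\infty)}|_{R_{\mathcal G}M}$, and horizontal sections span every fibre. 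Since Galois groups are invariant under rational gauge isomorphism of connections, we get ${\rm Gal}(\nabla)={\rm Gal}(\pi^*\nabla^{{\rm Lie}(\mathcal G)})$, up to conjugation.

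The second step is to compare the Galois group of a pullback connection with the original one. Take the frame bundle $P\to M$ of ${\rm Lie}(\mathcal G)$ with the point ${\rm id}$, and let $Z\subset P$ be the Zariski closure of the leaf through ${\rm id}$. The pulled-back frame bundle is $R_{\mathcal G}M\times_M P$, and its leaf through $(p,{\rm id})$ projects onto the leaf of $Z$. Hence the Galois group $\widetilde{\rm Gal}$ of the pullback is contained in ${\rm Gal}(\nabla^{{\rm Lie}(\mathcal G)})$. For the reverse comparison, let $Z'$ be the Zariski closure of the pulled-back leaf; it is an irreducible subvariety of $R_{\mathcal G}M\times_M P$. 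Its image in $P$ is constructible, irreducible, contains the leaf, and is stable under $\widetilde{\rm Gal}$. Its closure therefore contains $Z$, and comparing fibres shows that $Z$ is a finite union of $\widetilde{\rm Gal}$-translates, or more directly that $\widetilde{\rm Gal}$ has finite index in ${\rm Gal}$.

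The cleanest route to this finite-index statement is to use that $\pi\colon R_{\mathcal G}M\to M$ has irreducible fibres, since it is a principal bundle over the connected group $\mathcal G_{xx}$ after passing to a connected component. In Picard–Vessiot language, the function field $\mathbb C(M)$ is algebraically closed in $\mathbb C(R_{\mathcal G}M)$ up to a finite extension. Base extension of a Picard–Vessiot extension by a field whose constants are $\mathbb C$ changes the Galois group only by an algebraic subgroup of finite index, and not at all when the extension is regular. This already gives (b): the Lie algebras $\mathfrak{gal}$ coincide.

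For (a), the implication ${\rm Gal}(\nabla^{{\rm Lie}(\mathcal G)})=\{{\rm id}\}\Rightarrow{\rm Gal}(\nabla)=\{{\rm id}\}$ follows from the inclusion established above. The converse is the main obstacle, because a finite Galois group could in principle be absorbed by a nontrivial finite cover inside $R_{\mathcal G}M\to M$. I would handle it by proving that $\mathbb C(M)$ is algebraically closed in $\mathbb C(R_{\mathcal G}M)$, i.e. that the generic fibre of $\pi$ is geometrically irreducible. This holds when the isotropy $\mathcal G_{xx}$ is connected, which can be arranged since $R_{\mathcal G}M$ is irreducible and is a torsor under $\mathcal G_{xx}$. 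If $\mathcal G_{xx}$ is disconnected, I would instead argue directly: trivial ${\rm Gal}(\nabla)$ means the horizontal fields $X^{(\infty)}$ are rational on $R_{\mathcal G}M$. Their jets $jX$ are then rational sections on $R_{\mathcal G}M$ that are constant along fibres, since they factor through $\pi$. So they descend to rational horizontal sections on $M$, giving a trivial ${\rm Gal}(\nabla^{{\rm Lie}(\mathcal G)})$.
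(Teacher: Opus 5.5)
You correctly locate the obstacle. The setup is sound: $\Phi$ gives a regular gauge isomorphism $(\pi^*{\rm Lie}(\mathcal G),\pi^*\nabla^{{\rm Lie}(\mathcal G)})\cong(TR_{\mathcal G}M,\nabla)$, and projecting the closure of the pulled-back leaf gives ${\rm Gal}(\nabla)\subseteq{\rm Gal}(\nabla^{{\rm Lie}(\mathcal G)})$ up to conjugacy, which is the easy half of (a). The gap is the converse of (a), and both of your ways around it fail.

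First, connectedness of $\mathcal G_{xx}$ cannot be ``arranged''. Irreducibility of the total space of a torsor says nothing about the structure group: a connected double cover is an irreducible $\mathbb Z/2$-torsor. Second, the descent argument is wrong. The global rational field $Y$ extending $X^{(\infty)}|_{R_{\mathcal G}M}$ factors through $\pi$ only locally. So $d\pi(Y_p)$ depends only on the connected component of the fibre through $p$, not on the fibre. Over disconnected fibres it therefore descends only to an algebraic, possibly multivalued, field on $M$. That gives $\mathfrak{gal}=0$, not ${\rm Gal}=\{{\rm id}\}$.

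In fact no argument can work without an extra hypothesis. Take $M=\mathbb C^*$ and let $\mathcal G$ be the symmetry groupoid of the quadratic differential $dz^2/z$, i.e.\ $z\sigma_z^2=\sigma$. It is transitive, with $\dim_M\mathcal G=1$ and $\mathcal G_{xx}=\{\pm1\}$. The adapted frames $\{z_\varepsilon^2=z\}$ form the irreducible curve $\mathbb C^*_w$, with $z=w^2$ and $z_\varepsilon=w$. The $\mathcal D$-Lie algebra is spanned by $\sqrt{z}\,\partial_z$, and its prolongation restricted to $R_{\mathcal G}M$ is $\frac12\partial_w$. Hence ${\rm Gal}(\nabla)=\{{\rm id}\}$ while ${\rm Gal}(\nabla^{{\rm Lie}(\mathcal G)})=\{\pm1\}$. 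A centerless example of the same kind comes from descending the simply transitive ${\rm Aff}_1$-action $(\tau,v)\mapsto(c^{-1}\tau+b,cv)$ along $z=\tau^2$; the involution $\tau\mapsto-\tau$ normalizes the action.

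What does hold in general is that ${\rm Gal}(\nabla)$ is conjugate to a finite-index subgroup of ${\rm Gal}(\nabla^{{\rm Lie}(\mathcal G)})$. Equality holds when the generic fibre of $R_{\mathcal G}M\to M$ is geometrically irreducible, for instance when $\mathcal G_{xx}$ is connected, and under that hypothesis your first route is correct. The paper's one-line proof asserts ``$X^{(\infty)}|_{R_{\mathcal G}M}$ is rational iff $X$ is rational''. That silently uses the same descent, so (a) needs the same hypothesis there too.

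Your justification of (b) also needs repair. The principle you invoke is false: base change of a Picard--Vessiot extension by a regular extension with constants $\mathbb C$ need not preserve the Galois group up to finite index. Base-changing $\mathbb C(t)(e^t)/\mathbb C(t)$ by the purely transcendental extension $\mathbb C(t,e^t)$ kills the group.

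What makes the statement true here is specific to this situation. The horizontal sections of $\pi^*\nabla^{{\rm Lie}(\mathcal G)}$ are annihilated by the $\pi$-vertical vector fields. A rational function on $R_{\mathcal G}M$ annihilated by all vertical fields is algebraic over $\mathbb C(M)$, by the Jacobian criterion. Geometrically, your $Z'$ is invariant under these vertical fields, so it is a union of fibre components over its dense image in $Z$. This gives $\dim Z'=\dim R_{\mathcal G}M+\dim{\rm Gal}(\nabla^{{\rm Lie}(\mathcal G)})$, so the two groups have the same dimension and (b) follows. With this ingredient your route carries the same content as the paper's direct argument, which transfers algebraicity of horizontal sections along $X\mapsto X^{(\infty)}|_{R_{\mathcal G}M}$.
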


\begin{proof}
Let us take $x\in M$ a regular point of the bundles ${\rm Lie}(\mathcal G)\to M$ and $R_{\mathcal G}M\to M$, and let $r_x \in R_{\mathcal G}M$ be a frame on $x$. For a given element $j_xX\in {\rm Lie}(\mathcal G)_x$ we have that $X^{(\infty)}|_{R_\mathcal GM}$ and the section  $z\to j_zX$, defined in adequate neighborhoods of $r_x$ and $x$, are horizontal sections of $\nabla$ and $\nabla^{{\rm Lie}(\mathcal G)}$, respectively. Note that $X^{\infty}|_{R_{\mathcal G}M}$ is a rational (algebraic) vector field in $R_\mathcal GM$ if and only if $X$ is a rational (algebraic) vector field in $M$, if and only if $z\to j_zX$ is a rational section of $J(TM/M)\to M$. Statement (a) follows from the fact that Galois group of a linear connection is the identity if and only if all its horizontal sections are rational. Similarly, statement (b) follows from the fact that the lie algebra of the Galois group of a linear connection vanish if and only if all its horizontal sections are algebraic. 
\end{proof}

\begin{proposition} 
Assume that $\mathcal G \simeq G\ltimes M$ is isomorphic to the action groupoid of a faithful transitive algebraic group action. The following statements hold:
\begin{enumerate}
\item[(a)] $\rm Gal(\nabla^{{\rm Lie}(\mathcal G)}) = \{\rm id\}$.
\item[(b)]  $R_{\mathcal G}M\to M$ be a bundle of adapted frames for $\mathcal G$ and let $\nabla$ be the induced connection in $R_{\mathcal G}M$ whose horizontal vector fields are the prolongations $X^{(\infty)}|_{R_\mathcal GM}$ of infinitesimal generators of $\mathcal G$. Then ${\rm Gal}(\nabla) = \{\rm id\}$.
\end{enumerate}
\end{proposition}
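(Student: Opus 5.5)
The plan is to prove (a) directly from the structure of the action groupoid described in subsection \ref{ss_actiongroupoids}, and then to deduce (b) from (a) via Proposition \ref{prp_Gal_Gal}(a).

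For (a), fix a regular point $x\in M$. Since $\mathcal G\simeq G\ltimes M$ via $(g,z)\mapsto j_zL_g$, every one-parameter subgroup $\exp(tA)$ of $G$, with $A\in\mathfrak g$, gives a global infinitesimal generator $X_A$ of $\mathcal G$. This is the fundamental vector field of the action, $X_A(z)=\frac{d}{dt}|_{t=0}\exp(tA)\cdot z$, and it is a regular algebraic vector field on $M=G/H$ because the action map $G\times M\to M$ is a morphism. Its jet prolongation $z\mapsto j_zX_A$ is therefore a regular, in particular rational, section of $J(TM/M)$ that is horizontal for $\nabla^{{\rm Lie}(\mathcal G)}$.

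Next I show that these sections span all local horizontal sections. The map $A\mapsto j_xX_A$ from $\mathfrak g$ to ${\rm Lie}(\mathcal G)_x$ is linear. It is injective because the action is faithful: if $j_xX_A=0$, then $X_A$ vanishes identically on $M$, since $X_A$ is analytic and $M$ is irreducible, and then $\exp(tA)$ acts trivially, so $A=0$. Both spaces have dimension $r=\dim G=\dim_M\mathcal G$, so the map is an isomorphism. This is exactly item (d) of subsection \ref{ss_actiongroupoids}.

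It follows that a full basis of horizontal sections of $\nabla^{{\rm Lie}(\mathcal G)}$ near $x$ consists of rational sections. Hence the leaf of the induced foliation on the frame bundle $P$ through ${\rm id}$ is the graph of a rational section of $P$. Its Zariski closure $Z_{\rm id}$ is then a rational section over $M'$, so its stabilizer in ${\rm GL}({\rm Lie}(\mathcal G)_x)$ is trivial, which gives ${\rm Gal}(\nabla^{{\rm Lie}(\mathcal G)},x)=\{\rm id\}$. Equivalently, I invoke the principle already used in the proof of Proposition \ref{prp_Gal_Gal}: a linear connection has trivial Galois group if and only if all its horizontal sections are rational.

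For (b), I apply Proposition \ref{prp_Gal_Gal}(a) to the result of (a). As a direct cross-check, the adapted frame bundle is $R_{\mathcal G}M=G\cdot r_x\simeq G$, and the horizontal fields $X_A^{(\infty)}|_{R_{\mathcal G}M}$ correspond to the infinitesimal generators of the algebraic $G$-action on $G\cdot r_x$, that is, to right-invariant vector fields on $G$. These are regular, so the horizontal sections of $\nabla$ are rational and ${\rm Gal}(\nabla)=\{\rm id\}$.

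The main obstacle I expect is the identification ${\rm Lie}(\mathcal G)_x=\{j_xX_A\}$, which requires an honest dimension count. Faithfulness gives injectivity, but I must also check that $\dim_M\mathcal G$ equals $\dim G$. This relies on the embedding $j$ being injective, which again uses faithfulness together with the fact that a germ of an algebraic map on an irreducible variety determines the map, and on the orders being stabilized as in the finite-dimensional setup.
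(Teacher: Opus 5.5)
Your proposal is correct and follows essentially the same route as the paper. The jet prolongations of the fundamental vector fields of the $G$-action form an $r$-dimensional space of regular (hence rational) horizontal sections of $\nabla^{{\rm Lie}(\mathcal G)}$, which forces ${\rm Gal}(\nabla^{{\rm Lie}(\mathcal G)})=\{{\rm id}\}$, and (b) then follows from Proposition~\ref{prp_Gal_Gal}(a); your faithfulness and dimension-count argument that these sections exhaust ${\rm Lie}(\mathcal G)_x$ only makes explicit a step the paper takes for granted.
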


\begin{proof}
In the case of the action groupoid $\mathcal G \simeq G\ltimes M$ we have that the vector fields in the $\mathcal D$-algebra of $\mathcal G$ are the infinitesimal generators of the action of $G$ in $M$. Its jets prolongations form a $r$-dimensional space of regular (indeed rational) sections of $\nabla^{\rm Lie (\mathcal G)}$, and thus ${\rm Gal}(\nabla^{{\rm Lie}(\mathcal G)}) = \{{\rm id}\}$. Statement (b) follows by application of proposition \ref{prp_Gal_Gal} (a). 
\end{proof}

Then, we can state our main result, that characterizes the cases in which a $\mathcal D$-groupoid of finite dimension with centerless Lie algebra is the pullback of the action groupoid of an homogeneous space (Theorem \ref{thm:1}) or at least isogenous to an action groupoid (Theorem \ref{thm:2}).

\begin{theorem}\label{thm:1} 
Assume that $\mathcal G\subset{\rm Aut}(M)$ is a transitive finite dimensional $\mathcal D$-groupoid with centerless Lie algebra $\mathfrak g$. The following statements are equivalent:
\begin{enumerate}
\item[(a)] Let $\nabla^{{\rm Lie}(\mathcal G)}$ be the connection on ${\rm Lie}(\mathcal G)\to M$ whose horizontal sections are the jet prolongations of infinitesimal generators of $\mathcal G$. Then $\rm Gal(\nabla^{{\rm Lie}(\mathcal G)}) = \{\rm id\}$.
\item[(b)]  $R_{\mathcal G}M\to M$ be a bundle of adapted frames for $\mathcal G$ and let $\nabla$ be the induced connection in $P$ whose horizontal vector fields are the liftings to $R_{\mathcal G}M$ of the infinitesimal generators of $\mathcal G$. Then ${\rm Gal}(\nabla) = \{\rm id\}$.
\item[(c)] There is an algebraic group $G$ with Lie algebra $\mathfrak g$, a subgroup $H$ with $G$ acting faithfully on $G/H$, and a rational dominant map $\pi\colon M\dasharrow  G/H$ such that $\mathcal G$ is the pullback by $\pi$ of the action groupoid $G\ltimes(G/H)$ acting on $G/H$.
\end{enumerate}
\end{theorem}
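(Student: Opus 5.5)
The equivalence (a)$\Leftrightarrow$(b) is Proposition~\ref{prp_Gal_Gal}(a), so the proof reduces to showing (c)$\Rightarrow$(a) and (b)$\Rightarrow$(c).

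\emph{(c)$\Rightarrow$(a).} Since $\mathcal G = \pi^*(G\ltimes(G/H))$, the invariants of $\mathcal G$ are the pullbacks by $\pi$ of the invariants of the action groupoid. Hence, after restricting to open sets where $\pi$ is étale, the infinitesimal generators of $\mathcal G$ are the $\pi$-lifts of the fundamental vector fields of the $G$-action on $G/H$. Because $\pi$ is rational and étale there, these lifts are rational vector fields on $M$, so their jet prolongations are rational horizontal sections of $\nabla^{{\rm Lie}(\mathcal G)}$. They span a space of dimension $r=\dim\mathfrak g$, which is the rank of the connection. A flat linear connection with a full basis of rational horizontal sections has trivial Galois group, which gives (a). This is the same argument as in the proposition treating the action groupoid case, transported by $\pi$.

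\emph{(b)$\Rightarrow$(c).} Consider the canonical parallelism $\omega$ on $R_{\mathcal G}M$, which is of type ${\rm Lie}(\mathcal G)_x^{\rm rec}\simeq\mathfrak g^{\rm op}\simeq\mathfrak g$ and is also centerless. The connection $\nabla$ of (b) is exactly the reciprocal Lie connection $\nabla^{\rm rec}$ of $\omega$, since its horizontal fields are the infinitesimal symmetries of $\omega$. Since ${\rm Gal}(\nabla)=\{\rm id\}$, Theorem~\ref{thm:0} yields an algebraic group $G$ with Lie algebra $\mathfrak g$ and a dominant rational map $\Phi\colon R_{\mathcal G}M\dasharrow G$ with $\Phi^*\omega_G=\omega$; here we compose with inversion if needed to match the left/right conventions. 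Moreover, every horizontal field $X^{(\infty)}|_{R_{\mathcal G}M}$ is rational and $\Phi$-related to a right-invariant field on $G$, because both are the infinitesimal symmetries of the respective parallelisms. Next, the structure group $\mathcal G_{xx}\simeq\Gamma'_k$ acts on $R_{\mathcal G}M$ on the right, and this action preserves the symmetry fields $X^{(\infty)}$. Hence right translation by $h\in\mathcal G_{xx}$ transports $\omega$ to a parallelism with the same symmetry algebra. Since $\mathfrak g$ is centerless, such a parallelism differs from $\omega$ by a constant automorphism of $\mathfrak g$. This gives a homomorphism $\mathcal G_{xx}\to{\rm Aut}(\mathfrak g)$, and its identity component should correspond to conjugation by a subgroup $H\subset G$ so that $\Phi$ is $H$-equivariant. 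Passing to quotients gives a rational map $\pi\colon M = R_{\mathcal G}M/\mathcal G_{xx}\dasharrow G/H$. This map intertwines the infinitesimal generators of $\mathcal G$ with the fundamental fields of $G$ on $G/H$. Because the stabilizer of $x$ maps onto $H$, the pullback of $G\ltimes(G/H)$ by $\pi$ has the same invariants as $\mathcal G$, namely the functions on $R_{\mathcal G}M$ annihilated by all $X^{(\infty)}$, so the two groupoids coincide. Finally, faithfulness of $G$ on $G/H$ follows because $\mathfrak g$ acts faithfully on $M$ and the trivial Galois group lets us replace $G$ by its image.

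The main obstacle is producing the subgroup $H$ and the descent of $\Phi$ to $M$. The difficulty is that $\Phi$ is defined only up to left translation and its equivariance under the full (possibly disconnected) $\mathcal G_{xx}$ is not automatic. First I would normalize $\Phi$ by requiring $\Phi(r_x)=e$ and set $H=\{\Phi(r_x h):h\in\mathcal G_{xx}\}$. I would then check that this is a subgroup by using the uniqueness part of Theorem~\ref{thm:0}: a rational conjugation $(R_{\mathcal G}M,\omega)\to(G,\omega_G)$ is determined by the image of one point. This uniqueness holds because the horizontal fields are rational, and rationality is exactly what triviality of the full Galois group (rather than only of its Lie algebra) provides.
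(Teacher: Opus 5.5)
Your proposal is correct and follows the paper's proof essentially step for step: (a)$\Leftrightarrow$(b) via Proposition~\ref{prp_Gal_Gal}, (c)$\Rightarrow$(a) via the rational lifts of the fundamental fields, and (b)$\Rightarrow$(c) by applying Theorem~\ref{thm:0} to the canonical parallelism of $R_{\mathcal G}M$ (whose reciprocal Lie connection is $\nabla$), normalizing at $r_x$, taking $H$ to be the image of the stabilizer fiber, and descending to $M=\mathcal G_{x\bullet}/\mathcal G_{xx}\dasharrow G/H$. The paper simply asserts that $\Phi$ restricted to the stabilizer is a homomorphism onto $H$. Your "uniqueness of conjugations of $\omega$" does not directly justify this, because $r\mapsto\Phi(r\cdot h)$ pulls $\omega_G$ back to an ${\rm Ad}$-twist of $\omega$ rather than to $\omega$. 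What does justify it is a fact you already stated: the rational fields $X^{(\infty)}$ are $\Phi$-related to right-invariant fields on all of $R_{\mathcal G}M$ and commute with the right action. Hence $r\mapsto\Phi(r\cdot h)$ and $r\mapsto\Phi(r)\Phi(r_x\cdot h)$ intertwine the same infinitesimal action and agree at $r_x$, so they coincide, which gives both equivariance and the subgroup property.
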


\begin{proof}
The equivalence between proposition (a) and (b) is proposition \ref{prp_Gal_Gal} (a). Let us assume (c). Let $\mathfrak g \subset \mathfrak X(G/H)$ be the Lie algebra of infinitesimal generators of the action of $G$ on $G/H$. Each vector field $\vec A\in \mathfrak g$ lifts to a rational vector field $\pi^*(\vec A)$ in $M$. Thus $\pi^{*}(\mathfrak g)$ is the Lie algebra of infinitesimal generators of the action of $\mathcal G$ in $M$ and all its members are rational vector fields in $M$. Then, by Proposition \ref{prp_Gal_Gal} we have (a) and (b).

Let us assume (b). Then $(R_{\mathcal G}M,\omega)$ is a parallelized variety, where $\omega$ is the ${\rm Lie}(\mathcal G)_x^{\rm rec}$ parallelism form whose symmetries are the prolongations of infinitesimal generators of $\mathcal G$. Precisely, the Lie connection of its reciprocal parallelism is $\nabla$, and then by Theorem \ref{thm:0} we have that there is an algebraic group $G$ with ${\rm Lie}(G)\simeq {\rm Lie}(\mathcal G)_x$ and a rational dominant map $\pi_0\colon R_\mathcal GM\dasharrow  G$ such that $\pi_0^*(\omega_G) = \omega$, where $\omega_G$ is its left invariant structure form. If necessary, by composing with a left translation in $G$, we can assume that $e_G$  in the image of $\pi$ and take $r_x\in R_\mathcal GM$ such that $\pi_0(r_x) = e_G$. Let us also consider the morphism,
$$\varphi \colon \mathcal G_{x\bullet}\to R_\mathcal GM, \quad \sigma_{xy} \mapsto \sigma_{xy}\circ r_x,$$
and $\widetilde\pi = \varphi\circ \pi_0$, so that $\widetilde\pi$ is also a morphism of parallelized varieties between $(\mathcal G_{x\bullet },\varphi^*\omega)$ and $(G,\omega_G)$. Then $\widetilde\pi|_{\mathcal G_{xx}}$ is an isogeny between the algebraic group $\mathcal G_{xx}$ and some algebraic subgroup $H\subset G$. We then have that
for any $\widetilde \pi(\sigma_{xx}\circ \tau_{xy}) = \widetilde\pi(\sigma_{xx}) \widetilde\pi(\tau_{xy})$. Then we have a commutative diagram,
$$ 
\xymatrix{
\mathcal G_{x\bullet} \ar[r]\ar[d] &   G  \ar[d]\\
\mathcal G_{x\bullet}/\mathcal G_{xx} = M \ar[r]^-{\pi} & G/H
}
$$
Finally, the quotient map $\pi$ is the dominant map required in statement (c).
\end{proof}

\begin{theorem}\label{thm:2} 
Assume that $\mathcal G\subset{\rm Aut}(M)$ is a transitive finite dimensional $\mathcal D$-groupoid with centerless Lie algebra $\mathfrak g$. The following statements are equivalent:
\begin{enumerate}
\item[(a)] Let $\nabla^{{\rm Lie}(\mathcal G)}$ be the connection on ${\rm Lie}(\mathcal G)\to M$ whose horizontal sections are the infinitesimal generators of $\mathcal G$. Then ${\mathfrak{gal}}(\nabla^{{\rm Lie}(\mathcal G)}) = \{0\}$.
\item[(b)]  $R_{\mathcal G}M\to M$ be a bundle of adapted frames for $\mathcal G$ and let $\nabla$ be the induced connection in $P$ whose horizontal vector fields are the liftings of the infinitesimal generators of $\mathcal G$. Then $\mathfrak {gal}(\nabla) = \{0\}$.
\item[(c)] There is an algebraic group $G$ with Lie algebra $\mathfrak g$,  a subgroup $H$ with $G$ acting faithfully on $G/H$, such that $\mathcal G$ is isogenous to the action groupoid $G\ltimes G/H$.
\end{enumerate}
\end{theorem}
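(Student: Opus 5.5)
The plan follows the proof of Theorem \ref{thm:1}, replacing the triviality of the Galois group by the vanishing of its Lie algebra, and birational maps by finite coverings.

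\emph{(a)$\Leftrightarrow$(b).} This is Proposition \ref{prp_Gal_Gal} (b), so nothing new is needed here.

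\emph{(c)$\Rightarrow$(a).} Suppose $\pi_1\colon\widetilde M\to M$ and $\pi_2\colon\widetilde M\to G/H$ are finite dominant maps with $\pi_1^*\mathcal G=\pi_2^*(G\ltimes G/H)$. The infinitesimal generators of the action of $G$ on $G/H$ are regular vector fields. Their pullbacks by the étale (after restriction) map $\pi_2$ are rational vector fields on $\widetilde M$, and these span the solutions of ${\rm Lie}(\pi_1^*\mathcal G)$. Pushing them forward by the finite covering $\pi_1$ gives vector fields on $M$ that are algebraic, though possibly multivalued. Their jets are therefore algebraic horizontal sections of $\nabla^{{\rm Lie}(\mathcal G)}$, and they form a full basis of solutions. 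A linear connection all of whose horizontal sections are algebraic has finite Galois group, so $\mathfrak{gal}(\nabla^{{\rm Lie}(\mathcal G)})=0$.

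\emph{(b)$\Rightarrow$(c).} This is the substantive direction.
\begin{enumerate}
\item Since $\mathfrak{gal}(\nabla)=0$, the Galois group ${\rm Gal}(\nabla)$ is finite. Taking the Zariski closure $Z$ of a leaf in the associated principal bundle, as in the definition of the Galois group, gives a finite Galois covering $\widetilde M\to M$. Here $\widetilde M=Z$ is an irreducible principal ${\rm Gal}(\nabla^{{\rm Lie}(\mathcal G)})$-bundle over an open subset of $M$.
\item On $\widetilde M$, the pulled-back connection ${\rm Lie}(\pi^*\mathcal G)$ has a full set of rational horizontal sections, namely the tautological ones on $Z$. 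Hence ${\rm Gal}(\nabla^{{\rm Lie}(\pi^*\mathcal G)})=\{\rm id\}$.
\item The pullback $\pi^*\mathcal G$ is again a transitive finite dimensional $\mathcal D$-groupoid with the same centerless Lie algebra $\mathfrak g$. Theorem \ref{thm:1}, implication (a)$\Rightarrow$(c), then gives a rational dominant map $\widetilde M\dasharrow G/H$ with $\pi^*\mathcal G$ equal to the pullback of $G\ltimes G/H$.
\item Finally, shrink $\widetilde M$ to an open set on which this map is regular and finite. This is possible because $\dim\widetilde M=\dim M=\dim G/H$ and the map is dominant, hence generically finite. The resulting maps $\widetilde M\to M$ and $\widetilde M\to G/H$ exhibit the isogeny required in Definition \ref{def_isogeny}.
\end{enumerate}

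An alternative route for (b)$\Rightarrow$(c) is to apply Theorem \ref{thm:0} directly to $(R_\mathcal GM,\omega)$. This produces an isogeny of parallelized varieties between $R_{\mathcal G}M$ and $(G,\omega_G)$, which one then descends through the $\mathcal G_{xx}$-quotient as in the proof of Theorem \ref{thm:1}. I would prefer the covering argument above, because it reduces the statement directly to Theorem \ref{thm:1}.

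\textbf{Main obstacle.} The delicate point is step 1: checking that the Galois covering $Z\to M$ carries the $\mathcal D$-groupoid structure correctly. Two things must hold. First, the pullback of $\mathcal G$ to $Z$ is transitive, with adapted frame bundle $\pi_*^{-1}(R_\mathcal GM)$. Second, its $\mathcal D$-Lie algebra connection is exactly the pullback connection. Both should follow from the description of pullbacks via $\pi_\star^{-1}$ for coverings given before Definition \ref{def_isogeny}. A lesser point is the genericity needed to make both maps finite, which I expect to handle by restricting to suitable Zariski open subsets.
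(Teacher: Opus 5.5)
Your proposal is correct and follows essentially the same route as the paper. The paper proves (a)$\Leftrightarrow$(b) by Proposition \ref{prp_Gal_Gal}(b), and (c)$\Rightarrow$(a) by pulling back the regular generators of the action and reading them as multivalued algebraic fields on $M$. It proves the converse by passing to a finite covering on which the generators become rational and then invoking Theorem \ref{thm:1}; your Zariski closure of a leaf is a concrete choice of the paper's ``suitable covering''. One small slip: in step 1 the covering must come from the finite group ${\rm Gal}(\nabla^{{\rm Lie}(\mathcal G)})$ over $M$, since the Galois covering of $\nabla$ lives over $R_{\mathcal G}M$, so you are really proving (a)$\Rightarrow$(c), which is harmless because (a)$\Leftrightarrow$(b) is already established.
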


\begin{proof}
The equivalence between proposition (a) and (b) is proposition \ref{prp_Gal_Gal} (b). Let us assume (c). In such case there is an algebraic variety $\widetilde M$ of the same dimension than $M$, and dominant maps $\pi_1\colon \widetilde M \to M$ and $\pi_2\colon \widetilde M \to G/H$ such that
$\pi_1^*(\mathcal G) = \pi_1^*(G\ltimes (G/H))$. We have that $\pi_2$ induces an isomorphisms between the Lie algebra the infinitesimal generators of the action of $G$ on $G/H$ and the Lie algebra on infinitesimal generators of $\pi_1^*(\mathcal G)$ in $\widetilde M$. Then, we have that the Lie algebra of $\pi_1^*(\mathcal G)$ in $\widetilde M$ is spanned by rational vector fields. But rational vector fields in $\widetilde M$ are algebraic multivalued vector fields in $M$, so that $\mathfrak{gal}(\nabla^{{\rm Lie}(\mathcal G)}) = \{0\}$, which is statement (a).

Let us assume (a). In such case, we have a basis of the Lie algebra of infinitesimal generators of $\mathcal G$ given by algebraic multivalued vector fields in $M$. But those algebraic multivalued vector fields are rational vector fields in a suitable covering $\pi\colon \widetilde M \to U \subseteq M$ of a Zariski dense open subset $U$ of $M$. 
Let $\widetilde{\mathcal G} = \pi^*(\mathcal G)$; the infinitesimal generators of $\widetilde{\mathcal G}$ are then rational vector fields in $\widetilde M$. Then, we can apply Theorem \eqref{thm:1} and obtain a dominant map $\widetilde\pi \colon \widetilde M \to G/H$ such that 
$\widetilde\pi^*(G\ltimes (G/H)) = \widetilde{\mathcal G}$. Hence, $\mathcal G$ is isogenous to the action groupoid as required.  
\end{proof}

\begin{example}
Let us consider the Kummer groupoid in ${\rm Aut}(\mathbb C)$ given by equation \eqref{eq_kummer}. Its main differential invariant is given in equation \eqref{eq_invariant}. Thus, a bundle of adapted frames $R_{\mathcal G}\mathbb C$, is defined by the differential equation,
\begin{equation}\label{eq_adapted_frame}
\lambda_{\varepsilon\varepsilon\varepsilon} = \frac{3}{2} \frac{\lambda^2_{\varepsilon\varepsilon}}{\lambda_\varepsilon} - R(\lambda)\lambda_\varepsilon^3
\end{equation}
and it is coordinated by $\lambda$, $\lambda_{\varepsilon}$, $\lambda_{\varepsilon\varepsilon}$. Let us fix a frame $r_0 = (\lambda_0,1,0, \lambda_{\varepsilon\varepsilon\varepsilon},\ldots)$, of coordinates $\lambda(r_0) = \lambda_0$, $\lambda_\varepsilon(r_0) = 1$, $\lambda_{\varepsilon\varepsilon}(r_0) = 0$. We take an element of $\mathcal G$, $\sigma = (\lambda_0, \varphi, \varphi_\lambda, \varphi_{\lambda\lambda}, \varphi_{\lambda\lambda\lambda},  \ldots)$, with source point $\lambda_0$. In the system of coordinates $\lambda, \lambda_\varepsilon, \lambda_{\varepsilon\varepsilon}$ the left translation by $\sigma$ is given by:
$$L_{\sigma} \colon 
\begin{bmatrix}
\lambda \\ \lambda_\varepsilon \\ \lambda_{\varepsilon\varepsilon}  
\end{bmatrix}
\mapsto
\begin{bmatrix}
\sigma(\lambda) \\ \sigma'(\lambda)\lambda_\varepsilon \\ \sigma''(\lambda)\lambda_\varepsilon^2 + \sigma'(\lambda)\lambda_{\varepsilon\varepsilon}  
\end{bmatrix}; \quad 
\begin{bmatrix}
\lambda_0 \\ 1 \\ 0  
\end{bmatrix}
\mapsto
\begin{bmatrix}
\varphi \\ \varphi_\lambda  \\ \varphi_{\lambda\lambda}   
\end{bmatrix} = 
\begin{bmatrix}
\lambda \\ \lambda_\varepsilon  \\ \lambda_{\varepsilon\varepsilon}   
\end{bmatrix}
$$
Note that when $\sigma$ runs along $\mathcal G_{\lambda_0\bullet}$ then $L_\sigma(r_0)$ runs alongs $R_\mathcal G\mathbb C$. The jacobian matrix of $L_{\sigma}$ at $r_0$ is computed directly, so that we obtain:
$$[d_{r_0}L_\sigma] = 
\begin{bmatrix}
    \varphi_\lambda & 0 & 0 \\
    \varphi_{\lambda\lambda} & \varphi_\lambda & 0 \\
    \varphi_{\lambda\lambda\lambda} & 2\varphi_{\lambda\lambda} & \varphi_\lambda
\end{bmatrix}
$$
Let us set a basis of left invariant vector fields by propagating a basis of $T_{r_0}(R_\mathcal G\mathbb C)$ along to $R_{\mathcal G}\mathbb C$:
$$Y_0(\sigma\circ r_0)= d_{r_0}L_\sigma \left(\frac{\partial}{\partial \lambda} \right),  
Y_1(\sigma \circ r_0)= d_{r_0}L_\sigma \left(\frac{\partial}{\partial \lambda_{\varepsilon}} \right),
Y_2(\sigma \circ r_0)= d_{r_0}L_\sigma \left(\frac{\partial}{\partial \lambda_{\varepsilon\varepsilon}} \right).
$$
Then, 
$$Y_0 = \lambda_\varepsilon\frac{\partial}{\partial \lambda} + \lambda_{\varepsilon\varepsilon}\frac{\partial}{\partial \lambda_\varepsilon} + \varphi_{\lambda\lambda\lambda}\frac{\partial}{\partial \lambda_{\varepsilon\varepsilon\varepsilon}}.$$
In order to express $\varphi_{\lambda\lambda\lambda}$ in terms of the coordinates in $R_\mathcal GM$ we need to use the groupoid equation \eqref{eq_kummer}, obtaining:
$$\varphi_{\lambda\lambda\lambda} = R(\lambda_0)\lambda_{\varepsilon} + \frac{3}{2}\frac{\lambda_{\varepsilon\varepsilon}^2}{\lambda_\varepsilon} - R(\lambda)\lambda_\varepsilon^3$$
An then we have the basis of parallel vector field:
$$Y_0 = \lambda_\varepsilon\frac{\partial}{\partial \lambda} + \lambda_{\varepsilon\varepsilon}\frac{\partial}{\partial \lambda_\varepsilon} +\left(
 R(\lambda_0)\lambda_{\varepsilon} + \frac{3}{2}\frac{\lambda_{\varepsilon\varepsilon}^2}{\lambda_\varepsilon} - R(\lambda)\lambda_\varepsilon^3
\right)
\frac{\partial}{\partial \lambda_{\varepsilon\varepsilon}},$$
$$Y_1 = \lambda_\varepsilon\frac{\partial}{\partial \lambda\varepsilon} + 2\lambda_{\varepsilon\varepsilon}\frac{\partial}{\partial \lambda_{\varepsilon\varepsilon}}, \quad Y_2 = \lambda_\varepsilon\frac{\partial}{\partial \lambda_{\varepsilon\varepsilon}}.$$
Let us compare this basis with the $\mathfrak{sl}_2$-parallelism of $R_\mathcal G\mathbb C$ given in \cite[section 3.5.2]{bc2017} with parallel vector fields
$$E_{-1} = \lambda_{\varepsilon} \frac{\partial }{\partial \lambda} + 
\lambda_{\varepsilon\varepsilon} \frac{\partial }{\partial \lambda_\varepsilon} + 
\left(\frac{3}{2}\frac{\lambda_{\varepsilon\varepsilon}^2}{\lambda_{\varepsilon}} - R(\lambda)\lambda_{\varepsilon}^3 \right)\frac{\partial}{\partial \lambda_{\varepsilon\varepsilon}}$$
$$E_0 = \lambda_{\varepsilon}\frac{\partial}{\partial \lambda_{\varepsilon}} + 2 \lambda_{\varepsilon\varepsilon} \frac{\partial}{\partial \lambda_{\varepsilon\varepsilon}}, \quad 
E_1 = 2 \lambda_{\varepsilon}\frac{\partial}{\partial \lambda_{\varepsilon\varepsilon}}.$$
In is clear that,
$$
\begin{bmatrix}
    Y_0 \\ Y_1 \\ Y_2
\end{bmatrix} = 
\begin{bmatrix}
    1 & 0 & \frac{R(\lambda_0)}{2} \\ 
    0 & 1 & 0 \\ 
    0 & 0 & 2
\end{bmatrix}  
\begin{bmatrix}
    E_{-1} \\ E_0 \\ E_1
\end{bmatrix} 
$$
Therefore $Y_0$, $Y_1$, $Y_2$ are a basis of parallel vector fields for an $\mathfrak{sl}_2$-parallelism in $R_\mathcal GM$, as expected. Now, let us see that its reciprocal parallelism is that given by the prolongation of vector fields in the $\mathcal D$-lie algebra of $\mathcal G$. Let us consider a vector field $X = a(\lambda)\frac{\partial}{\partial \lambda}$ and its prolongation of $X^{(2)}$ to $R_\mathcal GM$,
$$X^{(2)} = a\frac{\partial}{\partial \lambda} + a_\lambda\lambda_\varepsilon\frac{\partial}{\partial \lambda_\varepsilon}+
(a_{\lambda\lambda} \lambda_\varepsilon^2 + a_\lambda \lambda_{\varepsilon\varepsilon})\frac{\partial}{\partial \lambda_{\varepsilon\varepsilon}}.$$
We now compute the Lie brackets of $X^{(2)}$ with the basis of parallel vector fields $Y_0$, $Y_1$, $Y_2$. A direct algebraic computation yields:
$$[Y_0, X^{(2)}] = (a_{\lambda\lambda\lambda} + 2R(\lambda)a_\lambda + R'(\lambda)a)\lambda_\varepsilon^3 \frac{\partial}{\partial \lambda_{\varepsilon\varepsilon}}, \quad [Y_1,X^{(2)}] = [Y_2,X^{(2)}] = 0.$$
It is then clear that $X^{(2)}$ commutes with $Y_0$, $Y_1$, $Y_2$ if and only if $a$ satisfies \eqref{eq_sym2}, or equivalently, $X$ is in the $\mathcal D$-Lie algebra of $\mathcal G$.
\end{example}

\subsection{Minimality and $n$-minimality}

There is a natural embedding of ${\rm Aut}(M)\times{\rm Aut}(M) \subset {\rm Aut}(M\times M)$,  
${\rm Aut}(M)\times {\rm Aut}(M)$ is the first order $\mathcal D$-groupoid on $M\times M$ defined by equations,
$$\frac{\partial \sigma_1}{\partial x_2}= \frac{\partial \sigma_2}{\partial x_1} = 0$$
for $\sigma = (\sigma_1,\sigma_2) \colon M\times M \to M\times M$, $(x_1,x_2) \mapsto (\sigma_1(x_1,x_2),\sigma_2(x_1,x_2))$. However, it was first noted by Sophus Lie that the diagonal action of ${\rm Aut}(M)$ on $M$, that is, the equation $\sigma_1 = \sigma_2$ can not be defined by using differential equations. There is a diagonal pseudogroup of ${\rm Aut}(M)\times \rm Aut(M)$ acting on $M\times M$, but it is not a $\mathcal D$-groupoid (or Lie pseudogroup). In broad terms, $\mathcal D$-groupoids may admit fewer sub-$\mathcal D$-groupoids than algebraic groups admit algebraic subgroups. 

\smallskip

If $\mathcal G$ is isogenous to an action groupoid $G\ltimes M$, then clearly, the diagonal of $\mathcal G\times \mathcal G$ is a sub-$\mathcal D$-groupoid isogenous to the diagonal of $G$ acting on $M\times M$. This motivates the following definition.

\begin{definition}\label{def_minimal}
    A $\mathcal D$-groupoid $\mathcal G$ on $M$ is $n$-minimal if for any sub-$\mathcal D$-groupoid $\mathcal H \subset \mathcal G^n$, if $\dim_{M^n} (\mathcal H)< \dim_{M^n}(\mathcal G)$ then there exists $i$ such that: $\dim_M({\rm pr}_i(\mathcal H)) =0$, where ${\rm pr}_i : \mathcal G^n \to \mathcal G$ is the projection on the $i^{th}$ factor. 
\end{definition}

\begin{remark}
   We will say minimal for $1$-minimal. That is, $\mathcal G$ is minimal if it does not admit non trivial sub-$\mathcal D$-groupoids of positive dimension.
\end{remark}

\begin{example} First examples of minimal $\mathcal D$-groupoid were given in \cite[Proposition 3.6]{Casale_BlazquezSanz_ArenasTirado_2025}, where if is shown that the Kummer groupoid of symmetries of a projective structure \eqref{eq_kummer} is minimal if and only if the Ricatti equation,
\begin{equation}\label{eq_ric}
u_\lambda + u^2 + \frac{1}{2}R(\lambda)=0
\end{equation}
has not solutions in $\mathbb C(\lambda)^{\rm alg}$, the algebraic closure of $\mathbb C(\lambda).$ This relation between the \eqref{eq_ric} and the third order linear equation \eqref{eq_sym2} comes through the
second order differential equation:
\begin{equation}\label{linear}
\psi_{\lambda\lambda} + \frac{1}{2}R(\lambda)\psi = 0.
\end{equation}
On one hand, logaritmic derivatives $u = \psi_\lambda/\psi$ of solutions of \eqref{linear} safisfy differential equation \eqref{eq_ric}. On the other hand \eqref{eq_sym2} is the second symmetric power of \eqref{linear} in the sense that for any basis of solutions $\psi_1, \psi_2$ of \eqref{linear} the quadratic products
$$a_1 = \psi_1^2, \quad a_2 = \psi_1\psi_2, \quad a_3 = \psi_2^2$$
is a basis of solutions of \eqref{eq_sym2}.


Let us consider $x\in \mathbb C$ a regular point for $R$, 
the Galois group ${\rm Gal}(\nabla^{{\rm Lie}(\mathcal G)}, x)$ acts on the space ${\rm Lie}(\mathcal G)_{x}\simeq \mathfrak{sl}_2$ by automorphims of Lie algebras. Then we have,
$${\rm Gal}(\nabla^{{\rm Lie}(\mathcal G)}, x) \subset {\rm Aut}({\rm Lie}(\mathcal G)_{x}) \simeq {\rm PSL}_2.$$ 

On one and, any non-tivial, positive dimension, sub-$\mathcal D$-groupoid of $\mathcal G$ has a non-trivial sub-$\mathcal D$-algebra of ${\rm Lie}(\mathcal G)$. Such non-trivial subalgebra of ${\rm Lie}(\mathcal G)$ will give us a non-trivial ${\rm Gal}(\nabla^{{\rm Lie}(\mathcal G)}, x)$-invariant Lie subalgebra of ${\rm Lie}(\mathcal G)_{x}$, and then the reduction of the action of the Galois group in the space of initial conditions. \\

Reciprocally, the realization of \eqref{eq_sym2} as the symmetric square of \eqref{linear} informs us that any  ${\rm Gal}(\nabla^{{\rm Lie}(\mathcal G)}, x)$-invariant subspace of ${\rm Lie}(\mathcal G)_x$ implies also that the Galois group of \eqref{linear} is inside a Borel subgroup of ${\rm SL}_2$, in such case \eqref{eq_ric} has a rational solution. \\

This rational solution $u$ can be used to find a proper sub-$\mathcal D$-groupoid in the following way. Take $r = -2u \to r' = \frac{1}{2}r^2+R$, and define: 
$$\mathcal{B} = \{j_x  (a  \partial_\lambda) \in J(T\mathbb C/\mathbb C)  \ :  \ a_{\lambda\lambda}+ra_\lambda+r_\lambda a=0\}$$
Then $\mathcal{B}$ is a 2-dimensional subalgebra of ${\rm Lie}(\mathcal G)$ as it follows from  $R = r'-\frac{1}{2}r^2$. It is a  Lie $\mathcal D$-algebra of affine symmetries of dimension 2, sitting inside the $3$ dimension projective symmetry algebra ${\rm Lie}(\mathcal G)$. \\
 



Consider the transformation rule  
$$r^*=\dfrac{\varphi_{\lambda\lambda}}{\varphi_\lambda}+\varphi_\lambda r(\varphi)$$
Differentiate at $t = 0$ along a flow $\varphi_t$ with generator $a(\lambda)\partial_\lambda$

$$\delta r := \dfrac{d}{dt}\Big|_{t=0}r^* = a_{\lambda\lambda}+ra_\lambda+r_\lambda a$$
so $a \in \mathcal{B} \Longleftrightarrow \delta r = 0 \Longleftrightarrow \text{the flow preserves the affine connection}\ r$ and the set of diffeomorphisms preserving $r$ is automatically a non trivial $2$-dimensional sub-pseudogroup of $\mathcal G$. Then we have the equivalence between the following conditions:

\begin{itemize}
    \item[i.] The associated Riccati equation \eqref{eq_ric}
    has no rational solution.
    \item[ii.] There is no non-trivial ${\rm Gal}(\nabla^{{\rm Lie}(\mathcal G)})_x$-invariant subspace in ${\rm Lie}(\mathcal G)$.
    \item[iii.] There is no non-trivial ${\rm Gal}(\nabla^{{\rm Lie}(\mathcal G)})_x$-invariant Lie subalgebra in ${\rm Lie}(\mathcal G)$. 
    \item[iv.] $\mathcal G$ is minimal.
\end{itemize}
\end{example}

\begin{remark}
The reader may note that in \cite[Proposition 3.6]{Casale_BlazquezSanz_ArenasTirado_2025} it is stated that the existence of an algebraic but not rational solution of the Ricatti equation \eqref{eq_ric} also implies the existence of a proper sub-$\mathcal D$-groupoid of $\mathcal G$. However, such a $\mathcal D$-groupoid is not a $\mathcal D$-groupoid $\mathcal H\subset\mathcal G\subset {\rm Aut}(\mathbb C)$. Such groupoid is of the form $\mathcal H\subset \pi^{*}(\mathcal G) \subset {\rm Aut}(M)$ where $\pi\colon M\to \mathbb C$ is the algebraic curve associated to the algebraic solution of \eqref{eq_ric}. Therefore, it is not considered here as a proper sub-$\mathcal D$-groupoid of $\mathcal G$.
\end{remark}




\begin{definition}
For a Lie algebra $\mathfrak g$, a subgroup of ${\rm Aut}(\mathfrak g)$ is Lie-irreducible if there is no invariant  proper Lie subalgebra.  
\end{definition}

\begin{theorem}\label{thm:6}
    Let $\mathcal G$ be a finite dimensional transitive $\mathcal D$-groupoid on $M$, $x\in M$ a regular point for $\mathcal G$ and ${\rm Lie}(\mathcal G)_x$ be the Lie algebra at $x$.    
    If ${\rm Gal}({\rm Lie}(\mathcal G),x) \subset {\rm Aut}({\rm Lie}(\mathcal G)_x)$ is Lie-irreducible then for any $n \in \mathbb N$, $\mathcal G$ is $n$-minimal.
\end{theorem}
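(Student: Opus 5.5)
The plan is to pass from sub-$\mathcal D$-groupoids to sub-$\mathcal D$-Lie algebras, and then to Galois-invariant Lie subalgebras of the fibre at a regular point. Let $\mathcal H\subset\mathcal G^n$ be a sub-$\mathcal D$-groupoid with $\dim_{M^n}\mathcal H<\dim_{M^n}\mathcal G^n$. Then ${\rm Lie}(\mathcal H)\subset{\rm Lie}(\mathcal G^n)$ is a sub-$\mathcal D$-Lie algebra of strictly smaller rank. It is defined over $\mathbb C(M^n)$ and stable under the flat connection, since its equations come from the invariants of $\mathcal H$.

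At a regular point $\bar x=(x_1,\ldots,x_n)$ of $M^n$, write $\mathfrak g_i={\rm Lie}(\mathcal G)_{x_i}$. The fibre ${\rm Lie}(\mathcal H)_{\bar x}$ is then a proper Lie subalgebra $\mathfrak h$ of
$${\rm Lie}(\mathcal G^n)_{\bar x}=\mathfrak g_1\times\cdots\times\mathfrak g_n.$$
Because ${\rm Lie}(\mathcal H)$ is a rational horizontal subbundle, $\mathfrak h$ is invariant under ${\rm Gal}(\nabla^{{\rm Lie}(\mathcal G^n)},\bar x)$. By Remark \ref{rmk_products}, this group is the product $\prod_i {\rm Gal}({\rm Lie}(\mathcal G),x_i)$, acting factorwise. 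This is where the product structure of the Galois group enters, and it is the reason the conclusion holds for every $n$.

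Next I would analyse $\mathfrak h$ through its projections. Write $\mathfrak p_i={\rm pr}_i(\mathfrak h)\subset\mathfrak g_i$. Each $\mathfrak p_i$ is a Lie subalgebra invariant under the $i$-th Galois factor. By Lie-irreducibility, $\mathfrak p_i$ is either $0$ or all of $\mathfrak g_i$.

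If some $\mathfrak p_i=0$, then ${\rm Lie}({\rm pr}_i(\mathcal H))=0$. Here ${\rm pr}_i(\mathcal H)$ is a $\mathcal D$-groupoid on $M$ whose invariants are the pullback invariants, so that ${\rm Lie}({\rm pr}_i\mathcal H)={\rm pr}_i{\rm Lie}(\mathcal H)$. Hence $\dim_M{\rm pr}_i(\mathcal H)=0$, which is the desired conclusion.

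So assume every $\mathfrak p_i=\mathfrak g_i$, and aim for a contradiction. Because every Galois factor acts independently, $\mathfrak h$ is invariant under $\prod_i{\rm Gal}_i$. Consider the subalgebra $\mathfrak h\cap\mathfrak g_i$, where $\mathfrak g_i$ is viewed as embedded in the $i$-th slot. It is an ideal of $\mathfrak p_i=\mathfrak g_i$, since brackets with elements of $\mathfrak h$ only see their $i$-th component. It is also Galois invariant, so by Lie-irreducibility it is $0$ or $\mathfrak g_i$.

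If all these intersections equal $\mathfrak g_i$, then $\mathfrak h$ is the whole product, contradicting properness. Suppose instead that $\mathfrak h\cap\mathfrak g_i=0$ for some $i$. Then, by the Goursat-type argument, the projection of $\mathfrak h$ to $\mathfrak g_i\times\mathfrak g_j$, for a suitable $j$, contains the graph of a Lie isomorphism $\phi\colon\mathfrak g_i\to\mathfrak g_j$ (after quotienting by the complementary intersections). This graph must be invariant under ${\rm Gal}_i\times{\rm Gal}_j$, acting independently. Taking $g\in{\rm Gal}_i$ and the identity in ${\rm Gal}_j$ gives $\phi\circ g=\phi$, so $g$ acts trivially on $\mathfrak g_i$.

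The case ${\rm Gal}_i$ trivial has to be handled separately. In that case every subalgebra is invariant, so Lie-irreducibility forces $\mathfrak g_i$ to have no proper nonzero subalgebras. Any nonzero element spans a subalgebra, so $\dim\mathfrak g_i\le1$; with $\dim\mathfrak g_i=0$ the statement is trivial. The degenerate cases, $\mathcal G$ finite or $\mathfrak g$ one-dimensional with trivial Galois group, must be treated directly. In the one-dimensional case the diagonal may really occur, so I expect the statement implicitly requires $\dim\mathfrak g\ge2$ or a nontrivial Galois group, and I would add that caveat.

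The main obstacle is the Goursat step for $n>2$, which I would carry out by induction on $n$. One projects away one factor, applies the inductive hypothesis to the image, and uses the vanishing of $\mathfrak h\cap\mathfrak g_i$ to produce a Galois-equivariant partial isomorphism between two factors. This is then contradicted by the independence of the Galois factor actions, as above.
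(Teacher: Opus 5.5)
Your proof is correct and has the same skeleton as the paper's. You pass to the fibre $\mathfrak h={\rm Lie}(\mathcal H)_{\bar x}\subset\prod_i\mathfrak g_i$, use its invariance under the product Galois group of Remark~\ref{rmk_products}, and apply Lie-irreducibility factor by factor. The difference is in the key algebraic step. The paper simply asserts that every proper invariant subalgebra of $\prod_i\mathfrak g_i$ is a product $\prod_i\mathfrak h_i$. You instead prove this, through the projections $\mathfrak p_i$ and the invariant ideals $\mathfrak h\cap\mathfrak g_i$.

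That analysis exposes the degenerate case, and your caveat is genuine. Take the translation groupoid $\sigma'=1$ on $\mathbb C$. Its horizontal sections are constant vector fields, so the Galois group is trivial, and it is Lie-irreducible because $\dim\mathfrak g=1$. Yet the diagonal translations form a sub-$\mathcal D$-groupoid of $\mathcal G^2$ of dimension $1<2=\dim_{M^2}\mathcal G^2$, and both of its projections equal $\mathcal G$. So the statement needs $\dim{\rm Lie}(\mathcal G)_x\ge 2$ (or a nontrivial Galois group). The paper assumes this only implicitly, through the word ``non-commutative'' in its title.

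Your Goursat step and induction on $n$ are an unnecessary detour, and the pairwise graph you want is not automatic for $n>2$. Suppose $\mathfrak h\cap\mathfrak g_i=0$. Take $h\in\mathfrak h$ and $g\in{\rm Gal}_i$ acting in the $i$-th slot only. Then $g\cdot h-h\in\mathfrak h\cap\mathfrak g_i=0$, so $g$ fixes ${\rm pr}_i(\mathfrak h)=\mathfrak g_i$ pointwise; this works for every $n$ at once. When $\dim\mathfrak g_i\ge 2$ this contradicts Lie-irreducibility, since every line would then be an invariant subalgebra. Hence $\mathfrak h\cap\mathfrak g_i=\mathfrak g_i$ whenever $\mathfrak p_i=\mathfrak g_i$, which gives exactly the product decomposition the paper takes for granted.
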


\begin{proof}
Let us first prove the minimality (= $1$-minimality). If $\mathcal H \subsetneq \mathcal G$ is such that $\dim_M \mathcal H < \dim_M \mathcal G$ then ${\rm Lie}(\mathcal H)\subsetneq {\rm Lie}(\mathcal G)$ is a sub vector bundle invariant by the connection. Then the fibers at $x$ : ${\rm Lie}(\mathcal H)_x \subsetneq {\rm Lie}(\mathcal G)_x$ is a Lie subalgebra invariant under the Galois group ${\rm Gal}(\nabla^{{\rm Lie}(\mathcal G)}, x)$.  Under our hypothesis it is $\{0\}$ meaning that $\dim_M \mathcal H = 0$. \\
Now assume $n > 0$. Let $\mathcal H\subsetneq \mathcal G^n$ such that $\dim_{M^n} \mathcal H < \dim_{M^n} \mathcal G^n$. Since ${\rm  Gal}({\rm Lie}(\mathcal G) ^n ,(x_1, \ldots x_n)) = \prod {\rm Gal}({\rm Lie}(\mathcal G), x_i)$ the possible proper Lie algebras ${\rm Lie}(\mathcal H)_{(x_1, \ldots x_n)} \subset \prod \mathfrak {\rm Lie}(\mathcal G)_{x_i}$ invariant under ${\rm  Gal}({\rm Lie}(\mathcal G) ^n ,(x_1, \ldots x_n))$ are the products of factors: ${\rm Lie}(\mathcal H)_{(x_1, \ldots x_n)} = \prod \mathfrak h_i$. Then there exist $i$ such that ${\rm pr}_i({\rm Lie}(\mathcal H)_{(x_1, \ldots x_n)})$ is a proper Lie subalgebra of ${\rm Lie}(\mathcal G)_{x_i}$ invariant under ${\rm Gal}({\rm Lie}(\mathcal G) , x_i)$. By Lie-irreducibility it is $\{0\}$.\\    
This implies that if $\mathcal H \subset \mathcal G^n$ is such that $\dim_{M^n}(\mathcal H) <  \dim_{M^n}(\mathcal G^n)$ then its projection on the $i^{th}$ factor is 0 dimensional.  
\end{proof}



    

By simple Lie algebra we mean a finite dimensional non commutative Lie algebra without proper ideals.

\begin{theorem}\label{thm:7}
  Let $\mathcal G$ be a finite dimensional transitive $\mathcal D$-groupoid on $M$, $x\in M$ a regular point of $\mathcal G$ and ${\rm Lie}(\mathcal G)_x$ be the Lie algebra at $x$. Assume ${\rm Lie}(\mathcal G)_x$ is simple and ${\rm Gal}(\nabla^{{\rm Lie}(\mathcal G)}, x) \not = \{ {\rm id} \}$.
  For any $n >1$ there exist no proper sub-$\mathcal D$-groupoid $\mathcal H \subset \mathcal G^n$ whose projections on each factors are onto.
\end{theorem}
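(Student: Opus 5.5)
The strategy is to pass to Lie algebras and use the Galois group, as in Theorem \ref{thm:6}. We then classify the subalgebras of a product of simple Lie algebras that surject onto every factor.

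Let $\mathcal H\subsetneq \mathcal G^n$ be a sub-$\mathcal D$-groupoid whose projections ${\rm pr}_i(\mathcal H)$ are all of $\mathcal G$. Write $\mathfrak g_i={\rm Lie}(\mathcal G)_{x_i}$ at a generic point $(x_1,\ldots,x_n)\in M^n$ that is regular for both $\mathcal H$ and $\mathcal G^n$.

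\textbf{Step 1.} The fibre $\mathfrak h={\rm Lie}(\mathcal H)_{(x_1,\ldots,x_n)}$ is a Lie subalgebra of $\prod\mathfrak g_i$. Because ${\rm Lie}(\mathcal H)$ is a sub-bundle stable under the connection, $\mathfrak h$ is invariant under ${\rm Gal}(\nabla^{{\rm Lie}(\mathcal G^n)},(x_1,\ldots,x_n))$. By Remark \ref{rmk_products} this group equals $\prod_i {\rm Gal}(\nabla^{{\rm Lie}(\mathcal G)},x_i)$.

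\textbf{Step 2.} Since ${\rm pr}_i(\mathcal H)=\mathcal G$, each projection ${\rm pr}_i(\mathfrak h)$ is all of $\mathfrak g_i$. For projections of $\mathcal D$-groupoids this needs justification; I would argue that the image of ${\rm Lie}(\mathcal H)$ is ${\rm Lie}$ of the projection, which has full dimension.

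\textbf{Step 3: Goursat-type structure.} For each $i$, the kernel $\mathfrak h\cap \mathfrak g_i$ is an ideal of ${\rm pr}_i(\mathfrak h)=\mathfrak g_i$. By simplicity it is either $0$ or $\mathfrak g_i$. Iterating this gives a partition of $\{1,\ldots,n\}$ into blocks. On each block $\mathfrak h$ is the graph of Lie isomorphisms $\phi_{ij}\colon\mathfrak g_i\to\mathfrak g_j$, and $\mathfrak h$ is the product of these block subalgebras. Since $\mathcal H$ is proper, we have $\dim\mathcal H<\dim\mathcal G^n$. Indeed, otherwise $\mathcal H=\mathcal G^n$, because both are irreducible of the same dimension; irreducibility should follow from transitivity and the fibration structure. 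Hence some block contains two indices $i\neq j$, and $\mathfrak h$ contains the graph of an isomorphism $\phi\colon\mathfrak g_i\to\mathfrak g_j$ (through the factorwise projection of $\mathfrak h$ to $\mathfrak g_i\times\mathfrak g_j$).

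\textbf{Step 4: contradiction with the Galois group.} The projection of $\mathfrak h$ to $\mathfrak g_i\times\mathfrak g_j$ is invariant under $G_i\times G_j$, where $G_i={\rm Gal}(\nabla^{{\rm Lie}(\mathcal G)},x_i)$. Take $g\in G_i$ with $g\neq{\rm id}$; this is possible because the Galois group is nontrivial, and the groups at different regular points are conjugate. Then $(g,{\rm id})$ preserves ${\rm graph}(\phi)$. This forces $\phi\circ g=\phi$, so $g={\rm id}$, a contradiction. Hence no such $\mathcal H$ exists.

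\textbf{Main obstacle.} The group theory in Step 4 is easy once the product structure of Remark \ref{rmk_products} is available. The delicate part is justifying that structure rigorously: the Galois group of the product connection at a generic point of $M^n$ must be the full product, which requires the points $x_i$ to be independent generic points. A second delicate point is checking that ${\rm Lie}$ of $\mathcal H$ surjects onto each factor, and that properness of $\mathcal H$ forces $\dim\mathfrak h<\dim\prod\mathfrak g_i$. I would handle the first point via Remark \ref{rmk_products} applied to $\prod({\rm Lie}(\mathcal G)\to M)$. I would handle the second by a dimension count on the source fibres of $\mathcal H$ and ${\rm pr}_i(\mathcal H)$, using the transitivity of $\mathcal G$.
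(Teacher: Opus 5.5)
Your Steps 1, 3 and 4 are exactly the paper's argument. The fibre ${\rm Lie}(\mathcal H)_{(x_1,\ldots,x_n)}$ is invariant under $\prod_i{\rm Gal}(\nabla^{{\rm Lie}(\mathcal G)},x_i)$ (Remark \ref{rmk_products}). Goursat's lemma then gives two factors on which ${\rm Lie}(\mathcal H)$ projects onto the graph of an isomorphism. Finally, $(g,{\rm id})$ can preserve such a graph only if $g={\rm id}$. Two minor points:
\begin{itemize}
\item The paper asserts the Goursat conclusion directly, without your block decomposition, and does not justify your Step 2 either.
\item Your worry about ``independent generic points'' is unnecessary. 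The product formula for Galois groups holds at every point of $M^n$, because the product connection has separated variables.
\end{itemize}

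The step that is actually wrong is your justification in Step 3 that $\mathcal H\neq\mathcal G^n$ forces $\dim\mathcal H<\dim\mathcal G^n$ ``because both are irreducible''. $\mathcal D$-groupoids need not be irreducible; already $\{\sigma\colon\sigma_\lambda^2=1\}$ on $\mathbb C$ is reducible. This can happen under the hypotheses of the theorem, and the literal statement then fails. Here is a counterexample:
\begin{itemize}
\item Take a Kummer groupoid $\mathcal K$ given by \eqref{eq_kummer}, with $R$ chosen so that \eqref{eq_sym2} has nontrivial Galois group.
\item Let $N=\{(\lambda,p)\colon p\neq0\}$, and let $\mathcal G$ be the $\mathcal D$-groupoid of germs $(\lambda,p)\mapsto(\sigma(\lambda),\eta\,\sigma_\lambda(\lambda)\,p)$ with $\sigma\in\mathcal K$ and $\eta=\pm1$.
\item $\mathcal G$ is transitive. ${\rm Lie}(\mathcal G)\simeq\mathfrak{sl}_2$, spanned by the fields $a\partial_\lambda+a_\lambda p\,\partial_p$ with $a$ solving \eqref{eq_sym2}, so ${\rm Gal}(\nabla^{{\rm Lie}(\mathcal G)})$ is that of \eqref{eq_sym2} and is nontrivial. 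Its isotropy groups, however, have two components.
\item The sign $\eta$ is a groupoid character. Write elements of $\mathcal G^2$ as pairs $(\phi,\psi)$ of germs on $N$ in coordinates $(\lambda_1,p_1)$ and $(\lambda_2,p_2)$. Then $\mathcal H=\{(\phi,\psi)\in\mathcal G^2\colon\eta(\phi)=\eta(\psi)\}$ is cut out in $\mathcal G^2$ by the first-order equation $\partial_{p_1}\phi_2\,\partial_{\lambda_2}\psi_1=\partial_{\lambda_1}\phi_1\,\partial_{p_2}\psi_2$.
\item So $\mathcal H$ is a sub-$\mathcal D$-groupoid of full dimension, with both projections onto $\mathcal G$. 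It is proper, since it misses $(f,{\rm id})$ for $f(\lambda,p)=(\lambda,-p)$.
\end{itemize}

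So this step cannot be repaired as written. The paper's proof tacitly assumes ${\rm Lie}(\mathcal H)_{(x_1,\ldots,x_n)}\neq\prod_i{\rm Lie}(\mathcal G)_{x_i}$ when it invokes Goursat. The theorem must therefore be read with ``proper'' meaning $\dim_{M^n}\mathcal H<\dim_{M^n}\mathcal G^n$, as in the definition of $n$-minimality. Alternatively, add the hypothesis that the isotropy groups $\mathcal G_{xx}$ are connected; this makes the source fibres $\mathcal G_{x\bullet}$, and hence those of $\mathcal G^n$, irreducible. With either fix, delete the irreducibility claim and your proof coincides with the paper's.
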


\begin{proof}
Let $\mathcal H \subset \mathcal G^n$ be a proper subgroupoid with projections on each factor onto. Let ${\rm Lie}(\mathcal H)_{(x_1,\ldots,x_n)} \subset \prod \mathfrak {\rm Lie}(\mathcal G)_{x_i}$ be its Lie algebra at a generic regular point $(x_1,\ldots, x_n) \in M^n$. As the projection on each factor are onto, by Goursat lemma, there exist $\ell<k$ such that the projection in the $(\ell,k)^{th}$ factors of ${\rm Lie}(\mathcal H)_{(x_1,\ldots,x_n)}$ is the graph of a isomorphism ${\rm Lie}(\mathcal G)_{x_\ell} \to {\rm Lie}(\mathcal G)_{x_k}$.

As ${\rm Lie}(\mathcal H) \subset {\rm Lie}(\mathcal G)^n$ is  a $\nabla^{{\rm Lie}(\mathcal G)^n}$-invariant sub-bundle then its fiber ${\rm Lie}(\mathcal H)_{(x_1,\ldots,x_n)}$ must be invariant under the action of ${\rm Gal}(\nabla^{{\rm Lie}(\mathcal G)^n}, (x_1, \ldots x_n))$. This implies that the projection must be ${\rm Gal}(\nabla^{{\rm Lie}(\mathcal G)},x_\ell) \times {\rm Gal}(\nabla^{{\rm Lie}(\mathcal G)}, x_k)$ invariant. This implies ${\rm Gal}(\nabla^{{\rm Lie}(\mathcal G)}, x_\ell) = \{{\rm id}\}$.
\end{proof}





\begin{corollary}
     Let $\mathcal G$ be a finite dimensional transitive $\mathcal D$-groupoid on $M$, $x\in M$ a regular point for $\mathcal G$ and ${\rm Lie}(\mathcal G)_x$ be the Lie algebra at $x$. Assume ${\rm Lie}(\mathcal G)_x$ is simple and $\mathcal G$ is minimal.
     Then for all $n \geq 1$, $\mathcal G$ is $n$-minimal. 
\end{corollary}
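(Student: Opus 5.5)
Assume ${\rm Lie}(\mathcal G)_x$ is simple and $\mathcal G$ is minimal. We argue by induction on $n$, the case $n=1$ being the hypothesis. Take $\mathcal H\subset\mathcal G^n$ with $\dim_{M^n}\mathcal H<\dim_{M^n}\mathcal G^n$. We must show that some projection ${\rm pr}_i(\mathcal H)$ is $0$-dimensional. As in the proofs of Theorems \ref{thm:6} and \ref{thm:7}, we work with the Lie algebra $\mathfrak h={\rm Lie}(\mathcal H)_{(x_1,\ldots,x_n)}\subset\prod{\rm Lie}(\mathcal G)_{x_i}$ at a generic regular point. By Remark \ref{rmk_products}, $\mathfrak h$ is invariant under $\prod_i{\rm Gal}(\nabla^{{\rm Lie}(\mathcal G)},x_i)$.

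\textbf{Step 1: each projection is $0$ or everything.} The projection ${\rm pr}_i(\mathcal H)$ is a sub-$\mathcal D$-groupoid of $\mathcal G$. I would take it as the Zariski closure of the image, whose invariants are ${\rm Inv}_{\mathcal D}(\mathcal H)\cap{\rm pr}_i^*\mathbb C(RM)$, a $\Gamma$-invariant $\mathcal D$-subfield. By minimality of $\mathcal G$, either $\dim_M{\rm pr}_i(\mathcal H)=0$, in which case we are done, or ${\rm pr}_i(\mathcal H)=\mathcal G$ up to the dimension count. In the second case the Lie algebra projection ${\rm pr}_i(\mathfrak h)$ is all of ${\rm Lie}(\mathcal G)_{x_i}$. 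So we may assume that every projection is onto.

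\textbf{Step 2: the nontrivial Galois group case.} Minimality forces ${\rm Gal}(\nabla^{{\rm Lie}(\mathcal G)},x)\neq\{{\rm id}\}$. Indeed, if the Galois group were trivial, Theorem \ref{thm:1} applies, since a simple algebra is centerless. It gives that $\mathcal G$ is the pullback of an action groupoid $G\ltimes G/H$ with $G$ of positive dimension. Then a one-parameter subgroup, or the pullback of $H'\ltimes G/H$ for a proper positive-dimensional subgroup $H'$ (for instance a Borel subgroup or a maximal torus of the simple $G$), gives a proper sub-$\mathcal D$-groupoid of positive dimension. This contradicts minimality. I would need to check that this sub-$\mathcal D$-groupoid is genuinely proper and positive-dimensional. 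Since $\mathfrak g$ is simple and non-commutative, it has proper nonzero subalgebras, and their pullbacks work.

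\textbf{Step 3: conclude with Theorem \ref{thm:7}.} With all projections onto and ${\rm Gal}\neq\{{\rm id}\}$, Theorem \ref{thm:7} says that no proper $\mathcal H\subset\mathcal G^n$ with onto projections exists. However, $\mathcal H$ is proper, since its dimension is smaller. This is a contradiction, so some projection is $0$-dimensional, which is $n$-minimality.

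The main obstacle is Step 1: making rigorous that the projection of a sub-$\mathcal D$-groupoid of $\mathcal G^n$ to a factor is a sub-$\mathcal D$-groupoid of $\mathcal G$, and that "onto" in Theorem \ref{thm:7} matches the dichotomy obtained from minimality. I would handle both at the level of Lie algebras. The subspace ${\rm pr}_i(\mathfrak h)$ is a Galois-invariant Lie subalgebra that comes from a $\nabla$-invariant subbundle. I would then reuse the Lie-algebra/sub-groupoid correspondence used in Theorem \ref{thm:6} to produce a sub-$\mathcal D$-groupoid of $\mathcal G$ with that Lie algebra. Minimality then makes it $0$ or ${\rm Lie}(\mathcal G)_{x_i}$, and the Goursat argument of Theorem \ref{thm:7} runs purely on Lie algebras.
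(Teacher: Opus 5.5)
Your proposal is correct and follows the paper's own argument. A trivial Galois group would make $\mathcal G$ a pullback of $G\ltimes (G/H)$ by Theorem~\ref{thm:1} (simple implies centerless), and a proper positive-dimensional algebraic subgroup of $G$ would then contradict minimality, so ${\rm Gal}(\nabla^{{\rm Lie}(\mathcal G)},x)\neq\{{\rm id}\}$; Theorem~\ref{thm:7}, combined with minimality applied to the factor projections, then gives $n$-minimality. Your Step~1 makes explicit the reduction the paper leaves tacit in ``Theorem~\ref{thm:7} proves the Corollary''. For that step, use the actual projected groupoid ${\rm pr}_i(\mathcal H)$ and check ${\rm Lie}({\rm pr}_i\mathcal H)={\rm pr}_i(\mathfrak h)$, rather than integrating ${\rm pr}_i(\mathfrak h)$ into a sub-$\mathcal D$-groupoid: the proof of Theorem~\ref{thm:6} only goes from sub-$\mathcal D$-groupoids to Galois-invariant subalgebras, and the converse can fail in general.
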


\begin{proof}
If ${\rm Lie}(\mathcal G)_x$ is simple, then its center is trivial. If ${\rm Gal}(\nabla^{{\rm Lie}(\mathcal G)},x) = \{\rm id\}$ then, by Theorem \ref{thm:1},  $\mathcal G$ is the pullback of an action groupoid $G\ltimes (G/H)$ by a rational dominant map $\pi\colon M \dasharrow G/H$. Any non-trivial algebraic subgroup $K \subset G$ defines a subgroupoid $\pi^*(K\ltimes (G/H))\subset \mathcal G$. For instance, the algebraic group generated by a vector field is a commutative subgroup and, by simplicity, a proper subgroup of positive dimension. This proves that if $\mathcal G$ is minimal, then ${\rm Gal} (\nabla^{{\rm Lie}(\mathcal G)},x)\not = \{ {\rm id}\}$. Finally, Theorem \ref{thm:7} proves the Corollary.
\end{proof}

\begin{definition}
Let $\mathcal G$ be $\mathcal D$-groupoid on $M$. It is said to act diagonally on $M\times M$ if exist a $\mathcal D$-groupoid $\mathcal H \subset \mathcal G \times \mathcal G$  such that: 
\begin{itemize}
\item[(a)] $\dim_M \mathcal G = \dim_{M\times M} \mathcal H$.
\item[(b)] The diagonal map $\Delta : \mathcal G \to \mathcal G \times \mathcal G$ has values in $\mathcal H$.
\end{itemize}
\end{definition}

\begin{corollary}\label{corollary16}
     Let $\mathcal G$ be a finite dimensional transitive $\mathcal D$-groupoid on $M$, $x\in M$  a regular point for $\mathcal G$ and ${\rm Lie}(\mathcal G)_x$ be the Lie algebra at $x$. Assume ${{\rm Lie}(\mathcal G)}_x$ is centerless. 
   The $\mathcal D$-groupoid $\mathcal G$ acts diagonally if and only if  $ \mathcal G$ is algebraically integrable in the sense of Theorem \ref{thm:1}, that is, $\mathcal G$ is the pullback of an action groupoid $G\ltimes (G/H)$ by a dominant map $\pi\colon M\to G/H$.
\end{corollary}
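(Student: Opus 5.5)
The plan is to prove the two implications separately. Both directions pass through the Galois criterion of Theorem \ref{thm:1}.

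\emph{Integrable $\Rightarrow$ diagonal action.} Assume $\mathcal G=\pi^*(G\ltimes (G/H))$ for a dominant rational map $\pi\colon M\dasharrow G/H$. The diagonal action of $G$ on $G/H\times G/H$, $g\cdot(y_1,y_2)=(gy_1,gy_2)$, is an algebraic group action, so its action groupoid $\Delta G\ltimes (G/H)^2$ is a $\mathcal D$-groupoid of $G/H\times G/H$. Its dimension over the base is $\dim G=\dim_M\mathcal G$, and it lies in $(G\ltimes G/H)^2$. I then set $\mathcal H=(\pi\times\pi)^*(\Delta G\ltimes (G/H)^2)$. I will check three things.
\begin{itemize}
\item $\mathcal H\subset\mathcal G\times\mathcal G$: this follows because pullback commutes with products, i.e.\ $(\pi\times\pi)^*(\mathcal K_1\times\mathcal K_2)=\pi^*\mathcal K_1\times\pi^*\mathcal K_2$, and pullback preserves inclusions (it is defined through the preimage under $(\pi\times\pi)_\star$).
\item $\dim_{M^2}\mathcal H=\dim G=\dim_M\mathcal G$: this holds because pullback by a covering preserves dimension over the base.
\item The diagonal map $\Delta$ takes values in $\mathcal H$: an element $j_z\sigma\in\mathcal G$ is locally the lift through $\pi$ of some $L_g$. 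Then $(j_{z_1}\sigma,j_{z_2}\sigma)$ projects to $(j L_g, jL_g)$, which is diagonal. Pairs in $\Delta(\mathcal G)$ (germs of the same $\sigma$ at two points) are determined using $\sigma$ as a global algebraic-type object. I will argue on a Zariski dense set and then take closures.
\end{itemize}

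\emph{Diagonal action $\Rightarrow$ integrable.} Let $\mathcal H\subset\mathcal G\times\mathcal G$ with $\dim_{M^2}\mathcal H=\dim_M\mathcal G$ and $\Delta(\mathcal G)\subset\mathcal H$. Since $\mathcal H$ contains the diagonal image, both projections ${\rm pr}_i(\mathcal H)$ contain $\mathcal G$ and are therefore onto. At a generic regular point $(x_1,x_2)$, the Lie algebra ${\rm Lie}(\mathcal H)_{(x_1,x_2)}\subset{\rm Lie}(\mathcal G)_{x_1}\times{\rm Lie}(\mathcal G)_{x_2}$ has dimension $r=\dim\mathfrak g$ and surjects onto each factor. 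It is therefore the graph of a linear isomorphism $\phi_{x_1x_2}\colon{\rm Lie}(\mathcal G)_{x_1}\to{\rm Lie}(\mathcal G)_{x_2}$, and $\phi_{x_1x_2}$ is a Lie algebra isomorphism because the graph is a subalgebra. The containment $\Delta(\mathcal G)\subset\mathcal H$ identifies this graph: infinitesimal generators $X$ of $\mathcal G$ give the generators $(X,X)$ of the diagonal, so $\phi_{x_1x_2}$ sends $j_{x_1}X\mapsto j_{x_2}X$. Equivalently, $\phi$ is parallel transport of $\nabla^{{\rm Lie}(\mathcal G)}$ along the germ of the solution space.

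Next I use the fact that ${\rm Lie}(\mathcal H)$ is an algebraic (rational) subbundle invariant under the product connection. By Remark \ref{rmk_products}, its fiber is invariant under ${\rm Gal}(\nabla^{{\rm Lie}(\mathcal G)},x_1)\times{\rm Gal}(\nabla^{{\rm Lie}(\mathcal G)},x_2)$ acting factorwise. Invariance of the graph forces $\phi\circ g_1=g_2\circ\phi$ for all independent pairs $(g_1,g_2)$. Taking $g_2=\mathrm{id}$ gives $g_1=\mathrm{id}$, so ${\rm Gal}(\nabla^{{\rm Lie}(\mathcal G)})=\{\rm id\}$. This is the same argument as in the proof of Theorem \ref{thm:7}, but here simplicity is not needed: the graph is forced directly by the dimension count. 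Theorem \ref{thm:1}, (a)$\Rightarrow$(c), which uses the centerless hypothesis, then gives the pullback description.

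An alternative, more concrete way to see the same conclusion: because ${\rm Lie}(\mathcal H)$ is rational in $(x_1,x_2)$, fixing $x_1$ generic gives $j_{x_2}X=\phi_{x_1x_2}(j_{x_1}X)$ as a rational function of $x_2$. Hence every infinitesimal generator is rational, and Proposition \ref{prp_Gal_Gal}(a) applies.

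The main obstacle is making rigorous that ${\rm Lie}(\mathcal H)_{(x_1,x_2)}$ equals the graph of transport by generators. This requires regularity of $\mathcal H$ at a generic point of $M\times M$ and the relation ${\rm Lie}(\Delta\mathcal G)\subset{\rm Lie}(\mathcal H)$. I would first try to obtain this from the defining invariants, using ${\rm Lie}(\mathcal H)=\{j X: X^{(\infty)}{\rm Inv}_{\mathcal D}(\mathcal H)=0\}$ and noting that $(X,X)^{(\infty)}$ kills the invariants of $\mathcal H$ because its flow $(\exp tX,\exp tX)$ lies in $\Delta(\mathcal G)\subset\mathcal H$.
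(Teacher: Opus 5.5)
Your proposal is correct and follows the paper's proof: you get the easy direction by pulling back the diagonal $G$-action, which you spell out in more detail than the paper's ``it is clear''. For the converse you argue that ${\rm Lie}(\mathcal H)_{(x_1,x_2)}$ is the graph of an isomorphism, that invariance of this graph under ${\rm Gal}(\nabla^{{\rm Lie}(\mathcal G)},x_1)\times{\rm Gal}(\nabla^{{\rm Lie}(\mathcal G)},x_2)$ forces the Galois group to be trivial, and you then conclude with Theorem \ref{thm:1}; your alternative argument via rationality of $x_2\mapsto\phi_{x_1x_2}(v)$ is also sound.
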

\begin{proof}
It is clear that algebraic group actions act diagonally and so do any $\mathcal D$-groupoid on $M$ wich is a pullback of an action groupoid.

Now assume $\mathcal G$ acts diagonally. Let $(x_1,x_2)\in M \times M$ and ${{\rm Lie}(\mathcal H)}_{(x_1,x_2)} \subset {\rm Lie}(\mathcal G)_{x_1} \times {\rm Lie}(\mathcal G)_{x_2}$ be the Lie algebras. As $\mathcal H$ contains the diagonal embedding of $\mathcal G$, both projections of ${{\rm Lie}(\mathcal H)}_{(x_1,x_2)}$ on ${\rm Lie}(\mathcal G)_{x_1}$ and ${\rm Lie}(\mathcal G)_{x_1}$ are onto. Then the dimension condition ensures that it is the graph of an isomorphism.

As before, this graph should be ${\rm Gal}(\nabla^{{\rm Lie}(\mathcal G)},x_1) \times {\rm Gal}(\nabla^{{\rm Lie}(\mathcal G)},x_2)$ invariant. This implies that these Galois groups are trivial. Finally, Theorem \ref{thm:1} proves the Corollary.
\end{proof}

\section*{Akcnowlegments}

DBS and AAT acknowledge the support of Universidad Nacional de Colombia - Sede Medell\'in through project \emph{Teoría de Galois diferencial y modelos de sistemas dinámicos algebraicos} (HERMES 60920).
\noindent GC is partially supported by the project ANR-25-CE40-1360 ``IsoMoDyn" and by Centre Henri Lebesgue, program ANR-11-LABX-0020-0. 

\bibliography{sn-bibliography}

\end{document}